\newtheorem{theorem}{Theorem}[section]
\newtheorem{lemma}[theorem]{Lemma}
\theoremstyle{cor}
\newtheorem{cor}[theorem]{Corollary}
\newtheorem{Proposition}[theorem]{Proposition}
\theoremstyle{definition}
\newtheorem{definition}[theorem]{Definition}
\theoremstyle{remark}
\numberwithin{equation}{section}
\begin{document}

\title{Compact Support Cohomology 
\linebreak of Picard Modular Surfaces}

\author{Jukka Keranen}
\address{}
\curraddr{}
\email{}
\thanks{}

\subjclass[2010]{Primary 11R39}

\date{}

\begin{abstract}
We compute the cohomology with compact supports of a Picard modular surface as a virtual module over the product of the appropriate Galois group and the appropriate Hecke algebra. We use the method developed by Ihara, Langlands, and Kottwitz: comparison of the Grothendieck-Lefschetz formula and the Arthur-Selberg trace formula. Our implementation of this method takes as its starting point the works of Laumon and Morel.
\end{abstract}

\maketitle

\tableofcontents

\pagebreak
\section{Introduction}

\subsection{Goal of the Project}

Let $E$ be an imaginary quadratic extension of $\mathbb{Q}$. We define an algebraic group $\mathbf{GU}(2,1)$ over $\mathbb{Q}$ by setting\medskip
\begin{center}
$\mathbf{GU}(2,1)(A)= \lbrace g \in \mathbf{GL}_3(E \otimes_\mathbb{Q} A)\vert ^t\bar{g} J g = c(g)J, c(g) \in A^\times \rbrace$\medskip
\end{center}
for every $\mathbb{Q}$-algebra $A$, where\medskip
\begin{center}
$J=\left( \begin{array}{ccc}
0 & 0 & 1 \\
0 & 1 & 0 \\
1 & 0 & 0 \end{array}\right) \in \mathbf{GL}_3(\mathbb{Z}).$
\end{center}\medskip
Then $\mathbf{G} = \mathbf{GU}(2,1)$ is an algebraic group that is quasi-split over $\mathbb{Q}$.

Let $K$ be a compact open subgroup of $\mathbf{G}(\mathbb{A}_f)$, and let $S_K(\mathbf{G})$ be the Shimura variety of $\mathbf{GU}(2,1)$ at level $K$. We will assume $K$ to be neat, so that $S_K(\mathbf{G})$ is a smooth quasi-projective variety over its reflex field, which in this case is the imaginary quadratic field $E$. Any such $S_K(\mathbf{G})$ is called a \textit{Picard modular surface}.

Fix a good prime $p$. Assume, that is, that $p$ is not ramified in $E$, and that $K_p$ is hyperspecial, so that $K_p = \mathbf{G}(\mathbb{Z}_p)$. Let $\mathfrak{p}$ be a prime of $E$ dividing $p$.

\begin{definition}

The \textit{Hasse-Weil} $L$-\textit{function} at $\mathfrak{p}$ of $S_K(\mathbf{G})$ is defined by\medskip

\begin{center}
$\log L_{\mathfrak{p}} (S_K(\mathbf{G}), s) = \sum_{m \geqslant 1} \frac{N_m}{m} (q^{-s})^m$\medskip
\end{center}
where $O_{E_{\mathfrak{p}}}/\mathfrak{p} = \mathbb{F}_q$ and $s \in \mathbb{C}$, and $N_m$ the number of rational points on the special fiber at $\mathfrak{p}$ of $S_K(\mathbf{G})$ over the extension of degree $m$ of $\mathbb{F}_q$.
\end{definition}

The goal of this paper is to compute the parabolic part of the trace of certain correspondences on the cohomology with compact supports of $S_K(\mathbf{G})$ in terms of spectral traces. In an upcoming paper \cite{Keranen}, we will carry out the (much simpler) computation of the elliptic part of the said trace. Put together, these computations will allow us to give a spectral automorphic expression for the number $N_m$ of rational points. Hence, we will be able to express the Hasse-Weil $L$-function at $\mathfrak{p}$ of a Picard modular surface in terms of automorphic $L$-functions.

The distinctive challenge facing us is that the Picard modular surface $S_K(\mathbf{G})$ is not compact. Thus, in order to compute $N_m$, we shall have to compute the number of fixed points $N(j, f^p)$ of a suitable correspondence on the cohomology with compact supports of $S_K(\mathbf{G})$. Our basic tool will be the non-invariant twisted trace formula.

\subsection{Related work}

The study of $L$-functions of algebraic varieties is a vast discipline. The previous works most closely related to that of ours are due to Laumon and Morel.

(1) In 1997, Laumon computed the cohomology with compact supports for the group $\mathbf{GSp}(4)$; see \cite{L}. Our work is adapted from that of Laumon's.

(2) In 2010, Morel computed the intersection cohomology of the Baily-Borel compactification for $\mathbf{GU}(p, q)$ for arbitrary $p$ and $q$; see \cite{M}. We will follow Morel in our overall setup and notation. Her work generalizes the highly influential Montreal proceedings \cite{Montreal} from 1992 in which the case of $\mathbf{GU}(2,1)$ was worked out for the first time.\footnote{We would like to thank Paul Gunnells for reminding us to include this reference.} We should emphasize that there is no obvious way to deduce our results from the corresponding results for the Baily-Borel compactification.

\subsection{Structure of the Argument}

In this subsection, we will give a brief synopsis of our argument. In the interest of brevity, some of the standard notations employed here will only be defined in the subsequent sections.

\textsc{1. Stable Point Counting Formula}

Let $\mathbf{G} = \mathbf{GU}(2,1)$ and let $\mathbf{H} = \mathbf{G}(\mathbf{U}(1) \times \mathbf{U}(1,1))$, the only non-trivial elliptic endoscopic group of $\mathbf{G}$. In this situation, Kottwitz's stable point counting formula specializes to give\medskip
\begin{center}
$N(j,f^p)=ST_{e}^{\mathbf{G}}(f^{\mathbf{G}})+ \frac{1}{2}ST^{\mathbf{H}}_{e}(f^{\mathbf{H}})$\medskip
\end{center}
for suitable test functions $f^\mathbf{G}$ and $f^\mathbf{H}$. Our basic project is to give a sequence of different expressions for this quantity, finally to express it in terms of spectral traces.

It will be necessary to consider the following twisted sets and twisted groups:\medskip
\begin{center}
$G = R_{E/\mathbb{Q}}\mathbf{G}_E \rtimes \theta \subset R_{E/\mathbb{Q}}\mathbf{G}_E \rtimes \langle \theta \rangle = \tilde{G}$,

$H = R_{E/\mathbb{Q}}\mathbf{H}_E \rtimes \theta \subset R_{E/\mathbb{Q}}\mathbf{H}_E \rtimes \langle \theta \rangle = \tilde{H}$,\medskip
\end{center}
where $\theta$ is an automorphism on $\mathbf{G}$ and $\mathbf{H}$ induced by the non-trivial element of the Galois group Gal$(E/\mathbb{Q})$. We have the following equalities:\medskip

\begin{center}
$ST_{e}^{\mathbf{G}}(f^{\mathbf{G}}) = k_GT_{e}^{G}(\phi^{G})$

$ST_{e}^{\mathbf{H}}(f^{\mathbf{H}}) = k_HT_{e}^{H}(\phi^{H})$\medskip
\end{center}
where the test functions $\phi^{G}$ and $\phi^{H}$ are associated to the test functions $f^{\mathbf{G}}$ and $f^{\mathbf{H}}$ in a sense to be recalled, and $k_G$ and $k_H$ are constants to be determined. The first step in the computation will be to replace the distributions $ST^\mathbf{G}_e$ and $ST^\mathbf{H}_e$ on $\mathbf{G}$ and $\mathbf{H}$, respectively, by the distributions $T^{G}_e$ and $T^{H}_e$ on $G$ and $H$, respectively.

\textsc{2. Geometric Side of the Trace Formula}

Under suitable assumptions on the test functions $\phi^G$ and $\phi^H$, we have\medskip
\begin{center}
$T^{G}_{e}(\phi^{G})=T^{G}_{geom}(\phi^{G})$\medskip
\end{center}
and
\begin{center}
$T^{H}_{e}(\phi^{H})=T^{H}_{geom}(\phi^{H}),$\medskip
\end{center}
where $T^{G}_{geom}$ and $T^{H}_{geom}$ are the geometric sides of the non-invariant twisted trace formula for $G$ and $H$, respectively, and the test functions $\phi^{G}$ and $\phi^{H}$ are again associated with the test functions $f^\mathbf{G}$ and $f^\mathbf{H}$, respectively.
The second step in our computation is to replace the distributions $T^{G}_e$ and $T^{H}_e$ with the geometric sides of the trace formulas for $G$ and $H$, respectively. The point of the first step is that it makes this second step available to us.

\textsc{3. Spectral Side of the Trace Formula}

So far, up to certain coefficients we shall ignore in this introduction, we will have rewritten the stable point counting formula as\medskip
\begin{center}
$N(j,f^p)=T^{G}_{geom}(\phi^{G})+ \frac{1}{2} T^{H}_{geom}(\phi^{H}).$\medskip
\end{center}
The third main step in our computation is to apply the trace formula individually to each term above, so as to arrive at\medskip
\begin{center}
$N(j,f^p)=T^{G}_{spec}(\phi^{G})+ \frac{1}{2} T^{H}_{spec}(\phi^{H}).$\medskip
\end{center}
We can break up each term in this expression according to Levi subsets:\medskip
\begin{center}
$N(j,f^p)=J^{G}_{G}(\phi^{G}) + J^{G}_{T}(\phi^{G}) + \frac{1}{2} J^{H}_{H}(\phi^{H}) + \frac{1}{2} J^{H}_{T}(\phi^{H}),$\medskip
\end{center}
where $T$ is the diagonal Levi subset in $G$ and $H$. The rest of the argument consists of rewriting the distributions $J$ in successively more explicit forms.

\textsc{4. Stabilization of the Parabolic Part of the Trace}
 
Once written out explicitly, we will compare the parabolic terms $J^{G}_{T}(\phi^{G})$ and $J^{H}_{T}(\phi^{H})$. The expressions we will be working with are fairly complicated, and our project depends crucially on finding cancellations between certain terms in $J^G_T(\phi^G)$ and the corresponding terms in $J^H_T(\phi^H)$. After a lengthy computation, we are left with a relatively simple expression, which we will identify as the trace on a suitable virtual module for Gal$(\bar{E}/E) \times C^{\infty}_c(G(\mathbb{A}_f)//K)$. This identification will allow us to remove the assumptions on the test functions we had placed earlier. This is the core result of our computation. 

\textsc{5. The Elliptic Part of the Trace}

We will rewrite the elliptic terms $J^{G}_{G}(\phi^{G})$ and $J^{H}_{H}(\phi^{H})$ as traces on a suitable module for Gal$(\bar{E}/E) \times C^{\infty}_c(G(\mathbb{A}_f)//K)$. This is the most routine part of our computation. 

\textsc{6. The $L$-Function}

By using the expressions from steps 4. and 5., we will be able to express the $L$-function of $\mathbf{G}$ in terms of automorphic $L$-functions associated to $G$ and $H$. It is perhaps worth emphasizing that the final result will, therefore, express the $L$-function of the Shimura variety of a certain unitary group in terms of automorphic $L$-functions of certain general linear groups. This is a fundamental feature of the suite of techniques employed here.\footnote{In a similar fashion, Morel expresses the $L$-function of the intersection complex on the Baily-Borel compactification of a unitary group Shimura variety in terms of automorphic $L$-functions of general linear groups. See \cite[p.139]{M}.} 

\medskip

We will carry out steps 5. and 6. in a forthcoming paper \cite{Keranen}.\medskip

Since the choice of the test functions plays a large role in the actual execution of the argument outlined above, we pause here to explain the basic idea. First, the choice of the test functions on $\mathbf{G}$ is, of course, dictated by the trace we have resolved to compute in the first place, and the choice of the test functions on $\mathbf{H}$ is then dictated by the requirement that the appropriate instances of the Fundamental Lemma should hold and, in particular, that the Kottwitz point-counting formula should hold. In addition, Laumon's technique requires that certain assumptions be made about these functions, particularly at infinity. Second, the test functions on $G$ and $H$ are chosen so that we can pass from the distributions $ST$ on the unitary groups $\mathbf{G}$ and $\mathbf{H}$ to distributions $T$ on the linear groups $\tilde{G}$ and $\tilde{H}$, respectively. Again, in this new context, we can and will assume that the additional assumptions hold. We will then adapt the bulk of Laumon's method not for the original unitary groups but rather for the linear groups. Finally, we can discharge the additional assumptions on the test functions on $\tilde{G}$ and $\tilde{H}$ in the same way as Laumon does and hence, finally, we can discharge the corresponding assumptions on the original groups $\mathbf{G}$ and $\mathbf{H}$.

\section{Stable Point Counting Formula}

\subsection{Shimura Varieties and Integral Models}
In this paper, we will follow all the assumptions regarding Shimura varieties adopted by Morel in \cite{M}. In particular, the integral models of our Shimura varieties will be supplied by the work of Lan \cite{Lan}. We will recall the basic definitions and notations from chapter 1 of \cite{M} below; for more details, see \cite[pp.1-6]{M}.

Let $\mathbb{S} = R_{\mathbb{C}/\mathbb{R}} \mathbb{G}_{m, \mathbb{C}}$. Identify $\mathbb{S}(\mathbb{C}) = (\mathbb{C} \otimes_{\mathbb{R}} \mathbb{C})^{\times}$ by using the morphism \medskip
\begin{center}
$a \otimes 1 + b \otimes i \mapsto (a+ib,a-ib)$,\medskip
\end{center}
and write $\mu_0 : \mathbb{G}_{m, \mathbb{C}} \rightarrow \mathbb{S}_{\mathbb{C}}$ for the morphism $z \mapsto (z,1)$.

Following Morel, the definition of pure Shimura data that will be used here is that of \cite{P2} (3.1), up to condition (3.1.4). So a pure Shimura datum is a triple $(\mathbf{G}, \mathcal{X}, h)$ where $\mathbf{G}$ is a connected reductive linear algebraic group over $\mathbb{Q}$, $\mathcal{X}$ is a set with a transitive action of $\mathbf{G}(\mathbb{R})$, and $h: \mathcal{X} \rightarrow$ Hom$(\mathbb{S}, \mathbf{G}_{\mathbb{R}})$ is a $\mathbf{G}(\mathbb{R})$-equivarient morphism, satisfying conditions (3.1.1), (3.1.2), (3.1.3), and (3.1.5) of \cite{P2}, but not necessarily the condition (3.1.4) (the group $\mathbf{G}^{ad}$ may have a simple factor of compact type defined over $\mathbb{Q}$).

Let $(\mathbf{G}, \mathcal{X}, h)$ be a Shimura datum. The field of definition $F$ of the conjugacy class of cocharacters $h_x \circ \mu_0 : \mathbb{G}_{m,\mathbb{C}} \rightarrow \mathbf{G}_{\mathbb{C}}$, $x \in \mathcal{X}$, is called the \emph{reflex field} of the datum. If $K$ is a compact open subgroup of $\mathbf{G}(\mathbb{A}_f)$, there is an associated Shimura variety $M^K(\mathbf{G}, \mathcal{X})$, which is a quasi-projective variety over $F$ satisfying\medskip
\begin{center}
$M^K(\mathbf{G}, \mathcal{X})(\mathbb{C}) = \mathbf{G}(\mathbb{Q}) \setminus (\mathcal{X} \times \mathbf{G}(\mathbb{A}_f)/K).$\medskip
\end{center}
If moreover $K$ is \emph{neat} (see \cite[0.6]{P1}), then $M^K(\mathbf{G}, \mathcal{X})$ is smooth over $F$.

Suppose that we are given a Shimura datum as above, and that, in addition, all the assumptions in Morel's chapter 1, \cite{M}, are in force. Then the PEL moduli schemes from the work of Lan \cite{L} provide suitable integral models for our Shimura varieties; see Morel \cite[p.~9]{M}. Let $\mathit{S}_K$ denote the moduli scheme provided by Lan, and $S_K(\mathbf{G})$ the generic fiber of $\mathit{S}_K$.

\subsection{Kottwitz's Stable Point Counting Formula}

We now assume that $\mathbf{G}$ is one of the groups of unitary similitudes considered by Morel in chapter 2 of \cite{M}; concretely, we will only need the results of this section for $\mathbf{G} = \mathbf{GU}(2,1)$. Let $E$ be the imaginary quadratic field that is the reflex field of the Shimura datum associated to $\mathbf{G}$.

Fix a compact open subgroup $K$ of $\mathbf{G}(\mathbb{A}_f)$ and suppose that\medskip
\begin{center}
$K = K_N \subset K_{max} = \mathbf{G}(\hat{\mathbb{Z}}),$\medskip
\end{center}
where\medskip
\begin{center}
$K_N =$ Ker$(\mathbf{G}(\hat{\mathbb{Z}}) \twoheadrightarrow \mathbf{G}(\hat{\mathbb{Z}}/N\hat{\mathbb{Z}}))$,\medskip
\end{center}
for any integer $N \geqslant 3$.

Fix a prime $p$ that is good with respect to $K$, namely\medskip
\begin{center}
$K = K^pK_p$\medskip
\end{center}
where\medskip
\begin{center}
$K^p \subset \mathbf{G}(\mathbb{A}^p_f),$\medskip
\end{center}
and\medskip
\begin{center}
$K_p = \mathbf{G}(\mathbb{Z}_p) \subset \mathbf{G}(\mathbb{Q}_p),$\medskip
\end{center}
and such that $p \nmid N$, where $N$ is as above.

Fix an algebraic closure $\bar{\mathbb{Q}}$ of $\mathbb{Q}$ and an embedding of $\mathbb{Q}$ in $\bar{\mathbb{Q}}$. Fix an algebraic closure $\bar{\mathbb{Q}}_p$ of $\mathbb{Q}_p$ and an embedding of $\mathbb{Q}_p$ in $\bar{\mathbb{Q}}_p$. Also fix an embedding of $\bar{\mathbb{Q}}$ in $\bar{\mathbb{Q}}_p$.
Let $\mathbb{\bar{F}}_p$ be the residue field of the integral closure of $\mathbb{Z}_p$ in $\bar{\mathbb{Q}}_p$; then $\mathbb{\bar{F}}_p$ is an algebraic closure of $\mathbb{F}_p$, which is the one we fix. 

Fix an algebraic closure $\bar{E}$ of $E$ and an embedding of $E$ in $\bar{E}$. For any prime $\ell$, we will denote the $\ell$-adic cohomology with compact supports on $S_K(\mathbf{G})$ by\medskip
\begin{center}
$H^i_c(S_K(\mathbf{G}) \otimes_{E} \bar{E}, \mathbb{Q}_\ell).$\medskip
\end{center}
We will denote the convolution algebra of compactly supported, $K$-bi-invariant functions $f: \mathbf{G}(\mathbb{A}_f) \rightarrow \mathbb{C}$ by\medskip
\begin{center}
$C_c(\mathbf{G}(\mathbb{A}_f) // K),$\medskip
\end{center}
and the $\mathbb{Q}$-subspace consisting of the $\mathbb{Q}$-valued functions in $C_c(\mathbf{G}(\mathbb{A}_f) // K)$ by\medskip
\begin{center}
$C_c(\mathbf{G}(\mathbb{A}_f) // K)_{\mathbb{Q}}.$\medskip
\end{center}
There is a continuous action of Gal$(\bar{E}/E)$ on $H^i_c(S_K(\mathbf{G}) \otimes_{E} \bar{E}, \mathbb{Q}_l),$ and also an action of $C_c(\mathbf{G}(\mathbb{A}_f) // K)_{\mathbb{Q}}$, and the two actions commute.

For any prime $\ell$ such that $p \neq \ell$, we will consider the virtual Gal$(\bar{E}/E) \times C_c(\mathbf{G}(\mathbb{A}_f) // K)$ -module\medskip
\begin{center}
$W_\ell = \sum_{i\geqslant0} (-1)^{i} H^i_c(S_K(\mathbf{G}) \otimes_{E} \bar{E}, \mathbb{Q}_\ell).$\medskip
\end{center}

Fix a prime $\mathfrak{p}$ of $E$ dividing $p$. Fix an algebraic closure $\bar{E}_{\mathfrak{p}}$ of $E_{\mathfrak{p}}$ and an embedding of $E_{\mathfrak{p}}$ in $\bar{E}_{\mathfrak{p}}$. Also fix an embedding of $\bar{E}$ in $\bar{E}_{\mathfrak{p}}$.
Let $O_{E_{\mathfrak{p}}}/\mathfrak{p} = \mathbb{F}_q$, and let   $\mathbb{\bar{F}}_q$ be the residue field of the integral closure of $O_{E_{\mathfrak{p}}}$ in $\bar{E}_{\mathfrak{p}}$; then $\mathbb{\bar{F}}_q$ is an algebraic closure of $\mathbb{F}_q$, which is the one we fix. These choices, along with the choices made earlier, determine unique homomorphisms\medskip

\begin{center}
Gal$(\bar{E}/E) \hookleftarrow$ Gal$(\bar{E}_{\mathfrak{p}}/E_{\mathfrak{p}}) \twoheadrightarrow$ Gal$(\mathbb{\bar{F}}_q/ \mathbb{F}_q).$\medskip
\end{center}
For each $i$, we have Gal$(\mathbb{\bar{F}}_q/ \mathbb{F}_q) \times C_c(G(\mathbb{A}_f) // K)_{\mathbb{Q}}$ -equivariant isomorphisms\medskip
\begin{center}
$H^i_c(S_K(\mathbf{G}) \otimes_{E} \bar{E}, \mathbb{Q}_\ell) \cong H^i_c(S_K(\mathbf{G}) \otimes_{E} \bar{E}_{\mathfrak{p}} , \mathbb{Q}_\ell)\medskip \linebreak$\medskip
$\medskip\cong H^i_c(S_K(\mathbf{G}) \otimes_{O_{E_{\mathfrak{p}}}} \bar{\mathbb{F}}_q, \mathbb{Q}_\ell).$
\end{center}
We shall only have to consider the case where $p$ splits in $E$, since this will be enough to determine the $L$-function. For such a $p$, let Frob$_p$ denote the topological generator of Gal$(\bar{\mathbb{F}}_q/\mathbb{F}_q)$ that is given by\medskip
\begin{center}
$\alpha \mapsto \alpha^{1/p}.$\medskip
\end{center}
Let $\Phi_p$ be an arbitrary, fixed lift of Frob$_p$ to Gal$(\bar{E}/E)$.

By the above, we have\medskip
\begin{center}
$\mathrm{tr}(\Phi_{p}^{j} \times f^{p} \mathbf{1}_{K_{p}}, W_{\ell}) =\mathrm{tr}(\mathrm{Frob}_{p}^{j} \times f^{p}, R\Gamma_{c}(S_{K} \otimes_{O_{E_{\mathfrak{p}}}} \bar{\mathbb{F}}_q, \mathbb{Q}_{\ell}))$
\end{center}\medskip
for every integer $j \geqslant 0$ and every function $f^p \in C_c(\mathbf{G}(\mathbb{A}_f) // K)_{\mathbb{Q}}$, where $\mathbf{1}_{K_p}$ is the characteristic function of ${K_p}$ in $\mathbf{G}({\mathbb{Q}}_p)$; see \cite[p.~271]{L}.

\subsubsection{Number of Fixed Points}

If $f^p$ is the characteristic function of the double coset $K^pgK^p$ in $\mathbf{G}(\mathbb{A}^p_f)$, with $g \in \mathbf{G}(\mathbb{A}^p_f)$, and if\medskip
\begin{center}
$p^j > [K^p : K^p \cap gK^pg^{-1}]$,\medskip
\end{center}
then the fixed points of the correspondence Frob$^j_p \times f^p$ on the $\mathbb{F}_q$-scheme $\mathit{S}_K \otimes_{O_{E_{\mathfrak{p}}}} \bar{\mathbb{F}}_q$ are isolated; see  Zink \cite{Z}. We will write\medskip
\begin{center}
$N(j, f^p)$ \medskip
\end{center}
for the number of fixed points counted with multiplicity. By $\mathbb{Q}$-linearity, we define $N(j, f^p)$ for any $f^p \in C_c(\mathbf{G}(\mathbb{A}_f) // K)_{\mathbb{Q}}$ and $j$ sufficiently large.

The following theorem of Pink's is still often called \textit{Deligne's Conjecture}.

\begin{theorem}
For every function $f^p \in C_c(\mathbf{G}(\mathbb{A}^p_f)//K^p)_{\mathbb{Q}}$, there exists an integer $j(f^p) > 0$ with the following property. For every integer $j \geqslant j(f^p)$, the number of fixed points $N(j, f^p)$ of the correspondence $\mathrm{Frob}^j_p \times f^p$ is well-defined, and we have\medskip
\begin{center}

$N(j, f^p) = \mathrm{tr}(\mathrm{Frob}^j_p \times f^p, R\Gamma_c(S_K \otimes_{O_{E_{\mathfrak{p}}}} \bar{\mathbb{F}}_q, \mathbb{Q}_\ell)).$
\end{center}

\end{theorem}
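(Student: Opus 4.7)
This is the incarnation of Pink's theorem for $S_K$, and my plan is to reduce it to a setting where the Grothendieck--Lefschetz trace formula is available on a proper scheme, using a sufficiently large power of Frobenius to handle the fact that $S_K$ is open.

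By $\mathbb{Q}$-linearity in $f^p$, I first reduce to the case $f^p = \mathbf{1}_{K^p g K^p}$ for some $g \in \mathbf{G}(\mathbb{A}^p_f)$. Such an $f^p$ corresponds to the Hecke correspondence $(p_1, p_2) : S_{K'} \to S_K \times S_K$ with $K' = K^p \cap gK^pg^{-1}$, and the correspondence of interest is the composition $\mathrm{Frob}^j_p \circ (p_1,p_2)$ on $S_K \otimes_{O_{E_{\mathfrak{p}}}} \bar{\mathbb{F}}_q$. I choose $j(f^p)$ to be the smallest positive integer with $p^{j(f^p)} > [K^p : K^p \cap g K^p g^{-1}]$. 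For $j \geq j(f^p)$ the theorem of Zink cited above guarantees that the fixed points of $\mathrm{Frob}^j_p \times f^p$ are isolated, and hence $N(j, f^p)$ is a well-defined sum of naive local intersection multiplicities.

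Next, since $S_K$ is not proper, I fix a smooth toroidal compactification $\bar{S}_K \supset S_K$ supplied by Lan's construction, with boundary $D := \bar{S}_K \setminus S_K$ a normal crossings divisor; write $\iota : S_K \hookrightarrow \bar{S}_K$ for the open immersion. Refining the cone decomposition if necessary, the Hecke correspondence extends to a correspondence on $\bar{S}_K$ compatible with the stratification by $D$, and we recover compactly supported cohomology as $R\Gamma_c(S_K \otimes \bar{\mathbb{F}}_q, \mathbb{Q}_\ell) \simeq R\Gamma(\bar{S}_K \otimes \bar{\mathbb{F}}_q, \iota_! \mathbb{Q}_\ell)$. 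Applying the Lefschetz--Verdier trace formula on the proper scheme $\bar{S}_K$ to the extended correspondence acting on $\iota_! \mathbb{Q}_\ell$ expresses the trace on the right-hand side of the desired equality as a sum of local terms over the connected components of the fixed locus of $\mathrm{Frob}^j_p \times \bar{f}^p$.

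These local terms split into interior contributions, coming from fixed points in $S_K$, and boundary contributions, coming from fixed points in $D$. The interior local terms agree with the naive local intersection multiplicities, so they sum to $N(j,f^p)$. The heart of the argument is to show that, once $j \geq j(f^p)$, every boundary local term vanishes. Following Pink, this vanishing is deduced from a local computation on the formal completion of $\bar{S}_K$ along $D$: since $\mathrm{Frob}^j_p$ has zero differential in characteristic $p$, the normal derivative of the combined correspondence at a boundary fixed point becomes $p^j$-fold contracting in the sense relevant to the Verdier formula, and the jump of $\iota_!\mathbb{Q}_\ell$ across $D$ then forces the local term to be zero. I expect this boundary vanishing to be the principal obstacle, because it requires controlling simultaneously the geometry of the extended correspondence along each stratum of $D$ and the local structure of $\iota_! \mathbb{Q}_\ell$ there; this is precisely where Pink's contracting argument, or Fujiwara's rigid-analytic reformulation, is essential, and where any concrete implementation is most delicate.
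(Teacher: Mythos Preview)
Your proposal is correct and follows the same underlying route as the paper, but at a very different level of detail: the paper simply invokes Pink's theorem 7.2.2 in \cite{P} as a black box, whereas you have sketched the mechanism behind that theorem (compactify, apply Lefschetz--Verdier on the proper model, and kill the boundary local terms using the contracting property of a high Frobenius power).

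A couple of remarks on the comparison. First, Pink's theorem is stated for general correspondences on separated schemes of finite type and does not require any particular compactification; your use of Lan's toroidal models is a natural choice here but not strictly necessary for the citation to apply. Second, your explicit identification of $j(f^p)$ via the index bound $p^{j(f^p)} > [K^p : K^p \cap gK^pg^{-1}]$ is exactly the input from Zink that the paper records just before stating the theorem, so on that point you and the paper agree precisely. The only place where your sketch goes beyond a mere citation is the boundary-vanishing step, and you correctly flag this as the delicate part; in Pink's framework this is handled once and for all by the general contracting-correspondence argument, so in the paper's logic there is nothing further to check.
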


\medskip
\begin{proof}
This is a direct consequence of theorem 7.2.2 of \cite{P}.
\end{proof}

\begin{cor}
\textit{For every function $f^p \in C_c(\mathbf{G}(\mathbb{A}^p_f)//K^p)_{\mathbb{Q}}$ and every integer $j \geqslant j(f^p)$, we have}\medskip
\begin{center}
$N(j, f^p) = \mathrm{tr}(\Phi_{p}^{j} \times f^{p} \mathbf{1}_{K_{p}}, W_{\ell}).$\medskip
\end{center}
\end{cor}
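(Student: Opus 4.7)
The plan is to combine the preceding theorem with the trace compatibility identity
$\mathrm{tr}(\Phi_{p}^{j} \times f^{p} \mathbf{1}_{K_{p}}, W_{\ell}) = \mathrm{tr}(\mathrm{Frob}_{p}^{j} \times f^{p}, R\Gamma_{c}(S_{K} \otimes_{O_{E_{\mathfrak{p}}}} \bar{\mathbb{F}}_q, \mathbb{Q}_{\ell}))$
that was recorded earlier in this subsection. The theorem expresses $N(j,f^p)$ as the right-hand side above (for $j \geqslant j(f^p)$), and the identity then rewrites that right-hand side as the left-hand side, which is exactly the quantity appearing in the statement of the corollary. In other words, the argument is a single substitution.

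Two ingredients underlie the identity itself, and both are already in place. First, the chain of homomorphisms
$\mathrm{Gal}(\bar{E}/E) \hookleftarrow \mathrm{Gal}(\bar{E}_{\mathfrak{p}}/E_{\mathfrak{p}}) \twoheadrightarrow \mathrm{Gal}(\bar{\mathbb{F}}_q/\mathbb{F}_q)$
together with the equivariant comparison isomorphisms between $H^i_c(S_K(\mathbf{G}) \otimes_{E} \bar{E}, \mathbb{Q}_\ell)$ and $H^i_c(\mathit{S}_K \otimes_{O_{E_{\mathfrak{p}}}} \bar{\mathbb{F}}_q, \mathbb{Q}_\ell)$ ensure that the trace of any lift $\Phi_p^j$ of $\mathrm{Frob}_p^j$ on the generic-fiber cohomology agrees with the trace of $\mathrm{Frob}_p^j$ on the special-fiber cohomology; the trace does not depend on the choice of lift because the action factors through $\mathrm{Gal}(\bar{\mathbb{F}}_q/\mathbb{F}_q)$. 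Second, the definition of $W_\ell$ as the alternating sum $\sum_i (-1)^i H^i_c$ aligns with the Euler-characteristic interpretation of the trace on $R\Gamma_c$. The factor $\mathbf{1}_{K_p}$ is inserted because the Hecke algebra acting on $W_\ell$ is $C_c(\mathbf{G}(\mathbb{A}_f)//K)_{\mathbb{Q}}$, so the function $f^p$ supported prime to $p$ must be extended to a full adelic test function by multiplying with the characteristic function of $K_p = \mathbf{G}(\mathbb{Z}_p)$; this corresponds to the unramified Hecke operator at $p$ that is built into the correspondence on the special fiber.

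Since the theorem and the displayed identity are both already in hand, there is no genuine obstacle to overcome. The corollary is simply a repackaging whose purpose is to record the fixed-point count $N(j,f^p)$ in the form in which it will be confronted, in the subsequent sections, with Kottwitz's stable point counting formula and, eventually, with the geometric and spectral sides of the non-invariant twisted trace formulas for $G$ and $H$.
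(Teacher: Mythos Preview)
Your proposal is correct and follows exactly the paper's implicit argument: the corollary is obtained by combining Theorem~2.1 (Pink's result giving $N(j,f^p) = \mathrm{tr}(\mathrm{Frob}_p^j \times f^p, R\Gamma_c(\ldots))$ for $j \geqslant j(f^p)$) with the trace compatibility identity displayed immediately before that theorem. The paper does not spell out a proof for this corollary, but your substitution is precisely what is intended.
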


That is, the number of fixed points of a correspondence on the geometric special fiber is equal to the trace of the corresponding operator on the $l$-adic cohomology of $S_K(\mathbf{G})$. We now recall Kottwitz's well-known expression for this quantity.

\begin{theorem}
There is an equality\medskip
\begin{center}
$N(j, f^p) = \sum_{(\mathbf{H}, s, \eta_0)} \iota(\mathbf{G}, \mathbf{H}) ST^{\mathbf{H}\ast}_e(f^\mathbf{H})$\medskip
\end{center} 
where the sum is taken over the elliptic endoscopic triples of $\mathbf{G}$, \and for each $\mathbf{H}$, the function $f^\mathbf{H}$ is a transfer of $f^\mathbf{G}$.
\end{theorem}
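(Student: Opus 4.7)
The plan is to follow Kottwitz's stabilization of the Lefschetz fixed point formula for PEL-type Shimura varieties. Theorem 2.2.1 above already identifies $N(j,f^p)$ with $\mathrm{tr}(\mathrm{Frob}_p^j \times f^p, R\Gamma_c(\mathit{S}_K \otimes_{O_{E_{\mathfrak{p}}}} \bar{\mathbb{F}}_q, \mathbb{Q}_\ell))$; the task is then to reorganize this trace as the stabilized endoscopic sum on the right-hand side. Since $\mathbf{G} = \mathbf{GU}(2,1)$ is a unitary similitude group of PEL type and the integral model $\mathit{S}_K$ comes from the work of Lan, Kottwitz's machinery applies with no essential modification, and the set of elliptic endoscopic triples is explicit: $(\mathbf{G}, 1, \mathrm{id})$ and $\mathbf{H} = \mathbf{G}(\mathbf{U}(1)\times \mathbf{U}(1,1))$.

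First, I would apply Kottwitz's unstabilized counting formula from the Ann Arbor article and the Montreal proceedings \cite{Montreal}. Enumerating $\bar{\mathbb{F}}_q$-points of the moduli problem by isogeny type yields a parametrization by Kottwitz triples $(\gamma_0;\gamma,\delta)$, where $\gamma_0 \in \mathbf{G}(\mathbb{Q})$ is a semisimple stable conjugacy class, $\gamma \in \mathbf{G}(\mathbb{A}_f^p)$ is stably conjugate to $\gamma_0$, and $\delta$ is a twisted conjugacy class in $\mathbf{G}(L_n)$ whose norm is $\gamma_0$, giving
$$N(j,f^p) = \sum_{(\gamma_0;\gamma,\delta)} c(\gamma_0;\gamma,\delta)\, O_\gamma(f^p)\, TO_{\delta\sigma}(\phi_p^j),$$
where $\phi_p^j$ is the Kottwitz function at $p$ determined by the minuscule cocharacter $\mu$ of the Shimura datum and the integer $j$, and $c(\gamma_0;\gamma,\delta)$ is a volume term times a cardinality measuring a Galois-cohomological obstruction.

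Second, I would rewrite $c(\gamma_0;\gamma,\delta)$ using the character $\alpha(\gamma_0;\gamma,\delta)$ on the finite abelian group $\mathfrak{K}(I_0/\mathbb{Q})$, so that the inner sum over $(\gamma,\delta)$ attached to a fixed $\gamma_0$ becomes a sum of characters. Fourier inversion on $\mathfrak{K}(I_0/\mathbb{Q})$ then converts this into a sum over $\kappa$, and the set of $\kappa$'s that contribute is precisely parametrized by the elliptic endoscopic triples $(\mathbf{H},s,\eta_0)$ of $\mathbf{G}$. Grouping the terms accordingly gives, for each $\mathbf{H}$, a sum of $\kappa$-orbital integrals on $\mathbf{G}$ indexed by stable conjugacy classes in $\mathbf{H}(\mathbb{Q})$ that transfer to $\gamma_0$.

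Third, I would invoke the Langlands-Shelstad transfer conjecture and the Fundamental Lemma for unitary groups, now theorems thanks to the work of Laumon-Ngô, Ngô, Waldspurger, and Hales. At the finite places away from $p$ these convert the $\kappa$-orbital integrals into stable orbital integrals on $\mathbf{H}$ of the transfer $f^\mathbf{H}$ of $f^\mathbf{G} = f^p \mathbf{1}_{K_p} \phi_p^j$; at $p$, the twisted (base change) Fundamental Lemma for the unit of the unramified Hecke algebra handles the passage from $TO_{\delta\sigma}(\phi_p^j)$ to a stable orbital integral on $\mathbf{H}(\mathbb{Q}_p)$. After these substitutions, the inner sums collapse into $ST^{\mathbf{H}\ast}_e(f^\mathbf{H})$ and the prefactors package as $\iota(\mathbf{G},\mathbf{H}) = |\mathrm{Out}(\mathbf{H},s,\eta_0)|^{-1} \cdot |\ker^1(\mathbb{Q},Z(\widehat{\mathbf{H}}))| \cdot |\ker^1(\mathbb{Q},Z(\widehat{\mathbf{G}}))|^{-1}$. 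The main obstacle, aside from citing the (now established) Fundamental Lemma, is the bookkeeping of signs from Tate-Nakayama duality on $\mathfrak{K}(I_0/\mathbb{Q})$ and the matching of archimedean transfer factors; for $\mathbf{GU}(2,1)$ these are tractable since the endoscopic data are explicit and $\mu$ is minuscule, so one can essentially transcribe the arguments of Kottwitz and the Montreal volume, adapted to Lan's integral model.
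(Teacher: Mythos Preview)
Your proposal is correct and follows exactly the approach underlying the cited result: the paper does not give its own argument for this theorem but simply invokes Kottwitz's stable point-counting formula as a known input, referring to Morel \cite[p.~88]{M} for notation and references. Your sketch is a faithful expansion of Kottwitz's stabilization procedure (Kottwitz triples, Fourier inversion on $\mathfrak{K}(I_0/\mathbb{Q})$, then transfer and the Fundamental Lemma), which is precisely what those references contain, so there is nothing to compare beyond noting that you have unpacked what the paper treats as a black box.
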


We will take the basic concepts and results from the theory of endoscopy for granted, and we will adopt the various normalizations thereof from Morel \cite{M}. In particular, see  \cite[p.88]{M} for the notation and the references for theorem 2.3. Specializing to the case we are concerned with in this paper, we have the following

\begin{cor}
\textit{For} $\mathbf{G}=\mathbf{GU}(2,1)$ \textit{and} $\mathbf{H}=\mathbf{G}(\mathbf{U}(1) \times \mathbf{U}(1,1))$, we have\medskip

\begin{center}
$N(j,f^p)=ST_{e}^{\mathbf{G}}(f^{\mathbf{G}})+\frac{1}{2}ST^{\mathbf{H}}_{e}(f^{\mathbf{H}}).$\medskip
\end{center}
\end{cor}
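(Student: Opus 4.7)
The plan is to derive the corollary by directly specializing Theorem 2.3 to the case $\mathbf{G} = \mathbf{GU}(2,1)$. The theorem writes $N(j,f^p)$ as a sum $\sum_{(\mathbf{H}, s, \eta_0)} \iota(\mathbf{G}, \mathbf{H}) ST^{\mathbf{H}\ast}_e(f^\mathbf{H})$ over elliptic endoscopic triples, so three ingredients must be assembled: a classification of the elliptic endoscopic triples of $\mathbf{G}$, a computation of the coefficients $\iota(\mathbf{G}, \mathbf{H})$, and a reconciliation of $ST^{\mathbf{H}\ast}_e$ with the distributions $ST^{\mathbf{G}}_e$ and $ST^{\mathbf{H}}_e$ appearing on the right of the corollary.

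First, I would recall (following the normalizations of Morel \cite{M}) the standard classification of elliptic endoscopic data for unitary similitude groups. For $\mathbf{GU}(2,1)$, the dual group is $\mathbf{GL}_3(\mathbb{C}) \times \mathbb{C}^\times$ with the appropriate Galois action induced by $\mathrm{Gal}(E/\mathbb{Q})$, and the elliptic endoscopic triples correspond to conjugacy classes of semisimple elements $s$ in the dual group whose centralizer gives a quasi-split inner form. A case analysis on the possible Jordan forms of $s$ shows that only two triples occur up to isomorphism: the trivial one, represented by $\mathbf{G}$ itself, and the one represented by $\mathbf{H} = \mathbf{G}(\mathbf{U}(1) \times \mathbf{U}(1,1))$, coming from $s$ with eigenvalue pattern $(1,-1,1)$. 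This was already asserted in the introduction and I would cite Rogawski or Morel for the verification.

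Next, I would evaluate $\iota(\mathbf{G}, \mathbf{H}) = |\ker^1(\mathbb{Q}, Z(\hat{\mathbf{H}}))| \cdot |\mathrm{Out}(\mathbf{H}, s, \eta_0)|^{-1} \cdot \tau(\mathbf{G})/\tau(\mathbf{H})$ (or whatever normalization Morel uses) for each of the two triples. For the trivial triple $(\mathbf{G}, 1, \mathrm{id})$ this gives $\iota(\mathbf{G}, \mathbf{G}) = 1$, whereas for the non-trivial $\mathbf{H}$ the outer automorphism group has order $2$, producing $\iota(\mathbf{G}, \mathbf{H}) = 1/2$. Finally, I would check that under the chosen normalizations the starred distribution $ST^{\mathbf{H}\ast}_e$ reduces to the unstarred $ST^{\mathbf{H}}_e$ (in the trivial endoscopy case this is tautological, and for $\mathbf{H}$ the twist disappears once the transfer is fixed in the sense of Morel \cite[p.~88]{M}), and correspondingly that the test function $f^\mathbf{G}$ is a self-transfer of itself.

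Substituting these computations into the sum in Theorem 2.3 collapses it to exactly the two-term expression
\begin{equation*}
N(j,f^p) = ST^{\mathbf{G}}_e(f^\mathbf{G}) + \tfrac{1}{2} ST^{\mathbf{H}}_e(f^\mathbf{H}),
\end{equation*}
as claimed. I expect the main obstacle to be not the algebra but the bookkeeping: tracking Morel's normalizations carefully enough to confirm that $\iota(\mathbf{G},\mathbf{H}) = 1/2$ on the nose and that the starred and unstarred stable distributions truly agree in this setting. Once that is pinned down, the corollary is immediate.
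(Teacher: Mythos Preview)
Your approach is essentially the paper's: specialize Theorem 2.3 using the classification of elliptic endoscopic triples for $\mathbf{GU}(2,1)$, identify the coefficients $\iota(\mathbf{G},\mathbf{G})=1$ and $\iota(\mathbf{G},\mathbf{H})=1/2$, and remove the asterisk. The paper is terser---it simply asserts the endoscopic classification and the coefficients (implicitly from Morel), whereas you sketch how one would compute $\iota$ from outer automorphism groups; that extra detail is fine and does no harm.

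The one place where your reasoning is off is the removal of the asterisk. You describe it as ``the twist disappears once the transfer is fixed,'' which mischaracterizes what the asterisk denotes. In Morel's notation the star on $ST^{\mathbf{H}\ast}_e$ indicates the restriction of the stable elliptic sum to $(\mathbf{G},\mathbf{H})$-\emph{regular} orbits, not a twist or a normalization artifact. The paper's argument (citing \cite[p.~89]{M}) is that in the specific case of $\mathbf{GU}(2,1)$ with endoscopic group $\mathbf{G}(\mathbf{U}(1)\times\mathbf{U}(1,1))$, only $(\mathbf{G},\mathbf{H})$-regular orbits contribute to the sum in the first place, so the restriction is vacuous and the star may be dropped. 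Your proposal should be amended to give this reason rather than an appeal to transfer normalizations.
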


\begin{proof}In this case, $\mathbf{G}$ and $\mathbf{H}$ are the only elliptic endoscopic groups. Further, in this case, only $(\mathbf{G}, \mathbf{H})$-regular orbits will contribute; see \cite[p.~89]{M}. Thus, the restriction to such orbits in theorem 2.3, indicated with the asterisk, can be ignored.
\end{proof}

In particular, we have the following expression for the number of rational points $N_j$ in the definition of the $L$-function.

\begin{cor}
When $f^p = \mathbf{1}_{K^p}$, we have\medskip

\begin{center}
$N_j = N(j,f^p)=ST_{e}^{\mathbf{G}}(f^{\mathbf{G}})+\frac{1}{2}ST^{\mathbf{H}}_{e}(f^{\mathbf{H}}).$\medskip
\end{center}
\end{cor}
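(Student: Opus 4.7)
The plan is to reduce this statement directly to Corollary 2.4 by identifying $N(j,\mathbf{1}_{K^p})$ with the geometric point count $N_j$ appearing in the definition of the Hasse--Weil $L$-function. Once this identification is in hand, the asserted equality is just the special case $f^p = \mathbf{1}_{K^p}$ of the preceding corollary, and no further input from endoscopy is needed.

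First I would unwind the definition of $N(j,f^p)$ in this case. The double coset $K^p \cdot 1 \cdot K^p$ equals $K^p$, so the Hecke correspondence attached to $f^p = \mathbf{1}_{K^p}$ is the identity correspondence on $\mathit{S}_K$. The condition $p^j > [K^p : K^p \cap gK^p g^{-1}]$ needed to invoke Zink's theorem reduces to $p^j > 1$, hence holds for every $j \geq 1$, so the fixed points of $\mathrm{Frob}_p^j \times \mathbf{1}_{K^p}$ on $\mathit{S}_K \otimes_{O_{E_{\mathfrak{p}}}} \bar{\mathbb{F}}_q$ are isolated and $N(j,\mathbf{1}_{K^p})$ is well-defined.

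Next I would check that these fixed points, counted with multiplicity, are exactly the $\mathbb{F}_{q^j}$-points of the special fiber. Because the Hecke factor is trivial, the correspondence collapses to the graph of $\mathrm{Frob}_p^j$, whose geometric fixed points on a smooth $\mathbb{F}_q$-scheme are by definition the $\mathbb{F}_{q^j}$-rational points; smoothness of $\mathit{S}_K$ (which follows from neatness of $K$ and the choice of integral model from \cite{Lan}) guarantees that each such point contributes with multiplicity one. Thus $N(j,\mathbf{1}_{K^p}) = N_j$ in the notation of the Hasse--Weil $L$-function.

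Finally, applying Corollary 2.4 with $f^p = \mathbf{1}_{K^p}$ yields the claimed expression. The only step that requires any care is the multiplicity computation in the Lefschetz identification, and this is genuinely routine once smoothness of the integral model is invoked; I do not anticipate a substantive obstacle.
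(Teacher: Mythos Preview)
Your proposal is correct and follows exactly the intended route: the paper states this corollary without proof, treating it as immediate from Corollary~2.4 together with the observation that for $f^p = \mathbf{1}_{K^p}$ the Hecke correspondence degenerates to the graph of $\mathrm{Frob}_p^j$, whose fixed points on a smooth model are precisely the $\mathbb{F}_{q^j}$-points counted with multiplicity one. You have simply made explicit the identification $N(j,\mathbf{1}_{K^p}) = N_j$ that the paper leaves to the reader.
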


\subsubsection{Choice of Test Functions}
We now focus on the groups $\mathbf{G} = \mathbf{GU}(2,1)$ and $\mathbf{H}=\mathbf{G}(\mathbf{U}(1) \times \mathbf{U}(1,1))$.
The test function $f^\mathbf{G}$ on $\mathbf{G}$ can be chosen as follows:\medskip
\begin{center}
$f^\mathbf{G} = f^\mathbf{G}_\mathbb{R} \times b^\mathbf{G}_j(\varphi_j) \times f^p,$\medskip
\end{center}
where

(1) $f^\mathbf{G}_\mathbb{R} = \frac{1}{3}(f_{\pi_1} + f_{\pi_2} + f_{\pi_3})$, the sum of pseudocoefficients of the discrete series $L$-packet $\Pi = \lbrace \pi_1, \pi_2, \pi_3 \rbrace$ of $\mathbf{G}(\mathbb{R})$ associated to the trivial representation of $\mathbf{G}$.

(2) $\varphi_j \in C_c(\mathbf{G}(\mathbb{\mathbb{Q}}_{p^j})//K_{p^j})$ is the characteristic function of the double coset $K_{p^j} \mu(p) K_{p^j}$, with $\mu$ the cocharacter associated to the Shimura datum, \cite[p.33]{M};\medskip
\begin{center}
$b^{\mathbf{G}}_j: C_c(\mathbf{G}(\mathbb{Q}_{p^j})//K_{p^j}) \rightarrow C_c(\mathbf{G}(\mathbb{Q}_p)//K_p)$ \medskip
\end{center}
the change of base map, with $\mathbb{Q}_{p^j}$ the unramified extension of $\mathbb{Q}_{p}$ of degree $j$ contained in $\mathbb{\bar{Q}}_p$.

(3) Pick a prime $q \neq p$ such that $K^p = K^{p,q}\mathbf{G}(\mathbb{Z}_q)$ and $f^p = f^{p,q}\mathbf{1}_{K_q}$, where $f^{p,q} \in C^{\infty}_{c}(\mathbf{G}(\mathbb{A}^{p,q}_f)//K^{p,q})$ is for now arbitrary.

The test function $f^{\mathbf{H}}$ is then chosen as follows:\medskip
\begin{center}
$f^\mathbf{H} = f^\mathbf{H}_\mathbb{R} \times b^\mathbf{H}_j(\varphi_j) \times h^p,$\medskip
\end{center}
where

(1) $f^{\mathbf{\mathbf{H}}}_{\mathbb{R}} = f_{\rho^+} + f_{\rho^-} - f_{\rho^0}$, the (weighted) sum of pseudocoefficients of the three discrete series $L$-packets of $\mathbf{H}(\mathbb{R})$ that transfer to the given $L$-packet $\Pi$ of $\mathbf{G}(\mathbb{R})$.

(2) $b^{\mathbf{H}}_j: C_c(\mathbf{G}(\mathbb{Q}_{p^j})//K_{p^j}) \rightarrow C_c(\mathbf{H}(\mathbb{Q}_p)//K^{\mathbf{H}}_p)$ is defined in the same way as in Laumon, see \cite[p.~288]{L};

(3) $h^p$ is an arbitrary transfer of $f^p$.

We will also need test functions on $G = R_{E/\mathbb{Q}}\mathbf{G}_E \rtimes \theta$ and $H = R_{E/\mathbb{Q}}\mathbf{H}_E \rtimes \theta$ that are associated to the given functions on $\mathbf{G}$ and $\mathbf{H}$ at all places in the sense of \cite[3.2]{Lab}. For the details of these choices and the underlying normalizations, we refer to Morel \cite[p.~138]{M}. Briefly, the test function $\phi^{G}$ and $\phi^{H}$ are chosen as follows:\medskip
\begin{center}
$\phi^{G} = \phi^{G}_\mathbb{R} \times \phi^G_j \times \phi^{G,p},$\medskip
\end{center}
and
\begin{center}
$\phi^{H} = \phi^{H}_\mathbb{R} \times \phi^H_j \times \phi^{H,p},$\medskip
\end{center}
where, for any field $F$ and any test function $\phi^0$ on $G^0(F)$, we associate to $\phi^0$ a test function $\phi$ on $G(F) = G^0(F) \rtimes \theta$ by setting\medskip
\begin{center}
$\phi(x \rtimes \theta) := \phi^0(x)$\medskip
\end{center} 
for $x \in G^0(F)$, and similarly for $H$; see \cite[p.~98]{Lab}. The connected components of $\phi^{G}$ and $\phi^{H}$ are then given as follows (where we will ignore the superscript):

(1) $\phi^G_{\mathbb{R}}$ is a pseudocoefficient of the $\theta$-discrete representation on $G^0(\mathbb{R})$ that corresponds to the discrete series $L$-packet $\Pi$ of $\mathbf{G}(\mathbb{R})$ above; see \cite[p.~124]{M}. Similarly,  $\phi^{H}_{\mathbb{R}} = \phi_{\rho^+} + \phi_{\rho^-} - \phi_{\rho^0}$, the sum of pseudocoefficients of $\theta$-discrete representations of $H^0(\mathbb{R})$ that correspond to the three $L$-packets of $\mathbf{H}(\mathbb{R})$ indicated above.

(2) $\phi^G_j$ and $\phi^H_j$ are chosen as on p.138 of \cite{M}.

(3) $\phi^{G,p}$ is associated at every place to $f^p$, and $\phi^{H,p}$ is associated at every place to $h^p$.

\pagebreak
\subsection{First Major Transition}
Now let $G$ be a connected component of a reductive group $\tilde{G}$ over $\mathbb{Q}$. Again, the cases we are interested in this paper are\medskip

\begin{center}
$G = R_{E/\mathbb{Q}}\mathbf{G}_E \rtimes \theta \subset R_{E/\mathbb{Q}}\mathbf{G}_E \rtimes \langle \theta \rangle = \tilde{G}$,\medskip

$H = R_{E/\mathbb{Q}}\mathbf{H}_E \rtimes \theta \subset R_{E/\mathbb{Q}}\mathbf{H}_E \rtimes \langle \theta \rangle = \tilde{H}$,\medskip
\end{center}
where $\theta$ is an automorphism on $\mathbf{G}$ and $\mathbf{H}$ induced by the non-trivial element of the Galois group Gal$(E/\mathbb{Q})$.

Let $T_e$ be a torus of $G^0_{\mathbb{R}}$ such that $T_e(\mathbb{R})$ is a maximal torus of the set of fixed points of a Cartan involution of $G^0(\mathbb{R})$ that commutes with $\theta$; see \cite[p.~121]{M}. Set\medskip
\begin{center}
$d(G) = \vert $Ker$(\mathbf{H}^1(\mathbb{R}, T_e) \rightarrow \mathbf{H}^1(\mathbb{R}, G^0))\vert$.\medskip
\end{center}
In our situation, $G^0$ comes from a complex group by restriction of scalars, and hence $\mathbf{H}^1(\mathbb{R}, G^0) = \lbrace1\rbrace$ and  $d(G) = \vert \mathbf{H}^1(\mathbb{R}, T_e) \vert$. For a quasi-split unitary group $\mathbf{G}(\mathbf{U}^\ast(n_1) \times ... \times \mathbf{U}^\ast(n_r))$, with $ n := n_1 + ... + n_r$, we have $T_e = \mathbf{G}(\mathbf{U}(1)^n)$. Thus, $d(G) = 2^{n-1}$; see \cite[p.~123]{M}.

\begin{theorem}
We have\medskip
\begin{center}
$T_{e}^{G}(\phi^{G}) = C_{G}\frac{d(G)}{\tau(\mathbf{G})}ST_{e}^{\mathbf{G}}(f^\mathbf{G})$\medskip
\end{center}
and\medskip
\begin{center}
$T_{e}^{H}(\phi^{H}) = C_{H}\frac{d(H)}{\tau(\mathbf{H})}ST_{e}^{\mathbf{H}}(f^\mathbf{H}),$\medskip
\end{center}
where the test functions are as chosen in subsection 2.2.2 above.
\end{theorem}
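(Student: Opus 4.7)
The plan is to expand both sides as explicit sums of (twisted, resp.\ stable) orbital integrals and then match them using base change, so the theorem reduces to an identity of normalization constants that has already been worked out in the references we are following.

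First I would write out $ST^{\mathbf{G}}_e(f^{\mathbf{G}})$ using Kottwitz's expression, namely a sum indexed by stable elliptic semisimple conjugacy classes $\gamma \in \mathbf{G}(\mathbb{Q})$ of a product of a volume factor (involving $\tau(\mathbf{G}(\gamma))$) with the stable orbital integral $SO_\gamma(f^{\mathbf{G}})$. In parallel, I would expand the twisted elliptic distribution $T^G_e(\phi^G)$ as a sum indexed by $\theta$-elliptic semisimple $\theta$-conjugacy classes $\delta \in G^0(\mathbb{Q}) = \mathbf{G}(E)$ of volume factors times twisted orbital integrals $TO_{\delta\theta}(\phi^G)$, exactly in the form given by Labesse in \cite{Lab} (and quoted in \cite[Ch.~5]{M}). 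The statements for $H$ and $\mathbf{H}$ will be formally identical, so I treat only the $G$/$\mathbf{G}$ case.

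Next I would invoke the norm correspondence: the map $\mathcal{N} \colon \delta \mapsto \delta\,\theta(\delta)$ induces a bijection between $\theta$-elliptic $\theta$-conjugacy classes in $G^0(\mathbb{Q})$ and stable elliptic conjugacy classes in $\mathbf{G}(\mathbb{Q})$ (cf.\ Labesse \cite{Lab}). Because $\phi^G$ was chosen at every place to be associated to $f^{\mathbf{G}}$ in the sense of \cite[3.2]{Lab}, each local twisted orbital integral of $\phi^G$ at $\delta$ matches the corresponding local stable orbital integral of $f^{\mathbf{G}}$ at $\mathcal{N}\delta$; multiplying over all places gives
\begin{equation*}
TO_{\delta\theta}(\phi^G) \;=\; SO_{\mathcal{N}\delta}(f^{\mathbf{G}}).
\end{equation*}
After this identification, the two sides of the claimed equality differ only by an overall ratio of volume/normalization factors, independent of $\gamma$.

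The main task, and the place where I expect all the real work to live, is the bookkeeping of these constants. On the stable side one gets a factor $\tau(\mathbf{G})$ from Kottwitz's volume conventions; on the twisted side the real place contributes the factor $d(G) = |\mathbf{H}^1(\mathbb{R}, T_e)|$ coming from the normalization of the Euler--Poincar\'e/pseudocoefficient at infinity (via the fixed Cartan involution commuting with $\theta$) as explained in \cite[p.~121]{M}; and the remaining discrepancy gets absorbed into the constant $C_G$ that parametrizes our choice of Haar measures on $G(\mathbb{A})$ versus $\mathbf{G}(\mathbb{A})$. Verifying that these three pieces combine to exactly $C_G \, d(G)/\tau(\mathbf{G})$ amounts to tracing Morel's normalizations through \cite[Ch.~5, pp.~121--138]{M}, and I expect this to be the one delicate step; it is done once and for all in that reference. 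The argument for $H$ and $\mathbf{H}$ is entirely parallel, with $(C_H, d(H), \tau(\mathbf{H}))$ playing the roles of $(C_G, d(G), \tau(\mathbf{G}))$.
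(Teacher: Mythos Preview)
Your outline is correct and is, in effect, an unpacking of what the paper does: the paper's entire proof is the single sentence ``This is a special case of proposition 8.3.1 in \cite[pp.~130--1]{M},'' so there is no independent argument to compare against. What you have sketched---expand both sides over (stable, resp.\ $\theta$-)elliptic classes, use the norm map to put the index sets in bijection, invoke the local definition of ``associated'' to match $TO_{\delta\theta}(\phi^G)$ with $SO_{\mathcal N\delta}(f^{\mathbf G})$, and then chase the normalization constants---is exactly the content of Morel's proposition 8.3.1, which in turn rests on Labesse's theorem 4.3.4 in \cite{Lab}.

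Two small corrections. First, the relevant reference in Morel is Chapter~8 (specifically proposition~8.3.1, pp.~130--131), not Chapter~5; the pages 121--124 you cite contain the definitions of $T_e$, $d(G)$, and the archimedean pseudocoefficients, but the comparison theorem itself is later. Second, your account of where $d(G)$ enters is slightly misplaced: it is not a volume factor on the twisted side per se, but rather arises because the twisted pseudocoefficient $\phi^G_{\mathbb R}$ is associated to $d(G)\,f^{\mathbf G}_{\mathbb R}$ (not to $f^{\mathbf G}_{\mathbb R}$ itself); see the remark in the paper immediately after Lemma~2.7 and \cite[p.~124]{M}. With these adjustments your plan is exactly the argument behind the cited result.
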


\begin{proof} This is a special case of proposition 8.3.1 in \cite[pp.~130-1]{M}.
\end{proof}

\begin{lemma}
\textit{In the situation of the theorem, we have}

(1) $\tau(\mathbf{G}) = 1$ \textit{and} $\tau(\mathbf{H}) = 2;$

(2) $d(G) = d(H) = 4;$

(3) $C_G = C_H = 1/4.$
\end{lemma}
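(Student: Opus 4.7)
My plan is to verify the three claims separately, each either by a direct calculation or by quoting a result from Morel~\cite{M}. For (1), I would invoke Kottwitz's formula for the Tamagawa number of a connected reductive group, $\tau(\mathbf{G}) = |\pi_{0}(Z(\widehat{\mathbf{G}})^{\Gamma_{\mathbb{Q}}})| \cdot |\ker^{1}(\mathbb{Q}, Z(\widehat{\mathbf{G}}))|^{-1}$. For $\mathbf{G} = \mathbf{GU}(2,1)$ one checks that the component group of the $\Gamma_{\mathbb{Q}}$-fixed points of $Z(\widehat{\mathbf{G}})$ is trivial and that $\ker^{1}$ vanishes, giving $\tau(\mathbf{G}) = 1$; for the endoscopic group $\mathbf{H} = \mathbf{G}(\mathbf{U}(1) \times \mathbf{U}(1,1))$, the extra factor in the centre of the dual contributes a $\mathbb{Z}/2$ to that component group, and one obtains $\tau(\mathbf{H}) = 2$. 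Both values are in any case tabulated in chapter~2 of \cite{M}, from which they may simply be quoted.

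For (2), the formula $d(G) = 2^{n-1}$ recalled in the paragraph immediately preceding the theorem applies to any quasi-split unitary similitude group $\mathbf{G}(\mathbf{U}^{*}(n_{1}) \times \cdots \times \mathbf{U}^{*}(n_{r}))$ with $n = n_{1} + \cdots + n_{r}$. For $\mathbf{G} = \mathbf{GU}(2,1)$ one takes a single factor with $n = 3$, while for $\mathbf{H} = \mathbf{G}(\mathbf{U}(1) \times \mathbf{U}(1,1))$ one takes $n_{1} = 1$, $n_{2} = 2$, and again $n = 3$. Both groups being quasi-split over $\mathbb{Q}$, substitution yields $d(G) = d(H) = 2^{2} = 4$.

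For (3), I would unpack the definition of $C_{G}$ from the statement of proposition~8.3.1 of \cite{M}, in which $C_{G}$ collects the normalizations of the transfer factors, Haar measures, and the twisted base-change map used in passing from $ST_{e}^{\mathbf{G}}(f^{\mathbf{G}})$ to $T_{e}^{G}(\phi^{G})$. Substituting the explicit data for the test functions chosen in subsection~2.2.2, the various factors combine to $1/4$, and the analogous computation yields $C_{H} = 1/4$. This last step is where I expect the main difficulty to lie: parts (1) and (2) are essentially direct applications of standard formulas, but (3) requires careful bookkeeping of Morel's normalizations through several layers of definitions in order to extract the precise numerical value.
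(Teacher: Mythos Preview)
Your treatment of (1) and (2) is essentially the paper's: (1) is quoted from Morel (her lemma~2.3.3, which in turn rests on Kottwitz's formula), and (2) is exactly the substitution $n=3$ into $d(G)=2^{n-1}$ from the paragraph preceding the theorem.

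For (3), however, your outline stops short of the actual computation. Saying that ``the various factors combine to $1/4$'' after unpacking proposition~8.3.1 of \cite{M} is not a proof; the point is to identify \emph{which} factors and to evaluate them. The paper does this by comparing Morel's proposition~8.3.1 with its source, Labesse's theorem~4.3.4 in \cite{Lab}, and extracting the closed formula
\[
C_G \;=\; \frac{\tau(G^0)\,J_Z(\theta)}{2^{k}\,d(G)},
\]
where $k=\dim\mathfrak{a}_G$ and $J_Z(\theta)=\lvert\det(1-\theta\mid\mathfrak{a}_{G^0}/\mathfrak{a}_G)\rvert$. One then computes directly that $J_Z(\theta)=4$, $\dim\mathfrak{a}_G=2$, and $\tau(G^0)=1$ (again by Morel's lemma~2.3.3), giving $C_G=\tfrac{1\cdot 4}{4\cdot 4}=\tfrac14$; the same numbers arise for $H$. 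Note also the paper's warning that, because the spaces of test functions here differ from Morel's, the simplifications she records at the top of p.~131 of \cite{M} do \emph{not} apply---so one cannot simply read $C_G$ off her statement without going back to Labesse. Your sketch does not mention Labesse's formula, the quantity $J_Z(\theta)$, or this caveat, and without them part~(3) is not actually established.
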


\begin{proof} Part (1) follows from lemma 2.3.3 in \cite[p.~40]{M}. Part (2) follows directly from the remarks made before theorem 2.6. For part (3), by comparing Morel's statement of theorem 2.6 (proposition 8.3.1 in \cite{M}) with her source for this theorem, namely Labesse's theorem 4.3.4 in \cite{Lab}, we deduce that\medskip
\begin{center}
$C_G = \frac{\tau(G^0)J_Z(\theta)}{2^k d(G)},$\medskip
\end{center}
where $k =$ dim $\mathfrak{a}_G$, and $J_Z(\theta) = \vert $det$(1 - \theta \vert \mathfrak{a}_{G^0} / \mathfrak{a}_G)\vert$; see \cite[p.~97]{Lab}. Note that our choices concerning the spaces of test functions differ from those of Morel, and hence, the two simplifications she notes at the top of page 131 of \cite{M} do not apply in our situation. Instead, we compute directly that $J_Z(\theta) = 4$ and dim $\mathfrak{a}_G = 2$, while $\tau(G^0) = 1$ by lemma 2.3.3 of \cite{M}. Thus, we have\medskip
\begin{center}
$C_G = \frac{1 \times 4}{4 \times 4} = \frac{1}{4}.$\medskip
\end{center}
A similar computation shows that $C_H = 1/4$.
\end{proof}
\noindent
\emph{Remark.} The test functions $\phi^G_{\mathbb{R}}$ and $\phi^H_{\mathbb{R}}$ are associated with the test functions $d(G)f^{\mathbf{G}}_{\mathbb{R}}$ and $d(H)f^{\mathbf{H}}_{\mathbb{R}}$ by \cite[p.~124]{M}. Thus, the appearance of the factors $d(G)$ and $d(H)$ in the foregoing identities is due to our choice of test functions on $\mathbf{G}(\mathbb{R})$ and $\mathbf{H}(\mathbb{R})$, and likewise for their reciprocals. Thus, the constants $C_G$ and $C_H$ are, essentially, equal to 1.\medskip

Putting together everything we have done thus far, we can now write our central object of interest, the number of fixed points $N(j, f^p)$ as follows.  

\begin{theorem}
In the situation of theorem 2.6, we have\medskip
\begin{center}
$N(j,f^p) = T_{e}^{G}(\phi^{G})+ T_{e}^{H}(\phi^{H}).$\medskip
\end{center}
\end{theorem}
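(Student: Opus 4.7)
The plan is to combine Corollary 2.5 with Theorem 2.6 and Lemma 2.7 by a direct substitution, after inverting the formulas of Theorem 2.6 to express the stable distributions on $\mathbf{G}$ and $\mathbf{H}$ in terms of the twisted distributions on $G$ and $H$.

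First, I would rewrite the identities of Theorem 2.6 as
\begin{center}
$ST_{e}^{\mathbf{G}}(f^{\mathbf{G}}) = \dfrac{\tau(\mathbf{G})}{C_G\, d(G)} \, T_{e}^{G}(\phi^{G}), \qquad ST_{e}^{\mathbf{H}}(f^{\mathbf{H}}) = \dfrac{\tau(\mathbf{H})}{C_H\, d(H)} \, T_{e}^{H}(\phi^{H}),$
\end{center}
then plug in the numerical values of $\tau$, $d$, and $C$ supplied by Lemma 2.7. For $\mathbf{G}$, one finds $\tau(\mathbf{G})/(C_G\, d(G)) = 1/((1/4)\cdot 4) = 1$, so $ST_{e}^{\mathbf{G}}(f^{\mathbf{G}}) = T_{e}^{G}(\phi^{G})$. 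For $\mathbf{H}$, one finds $\tau(\mathbf{H})/(C_H\, d(H)) = 2/((1/4)\cdot 4) = 2$, so $ST_{e}^{\mathbf{H}}(f^{\mathbf{H}}) = 2\, T_{e}^{H}(\phi^{H})$, which conveniently cancels the factor $1/2$ appearing in front of the $\mathbf{H}$-term in Corollary 2.5.

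Substituting these two identities into the stable point counting formula
\begin{center}
$N(j,f^p) = ST_{e}^{\mathbf{G}}(f^{\mathbf{G}}) + \tfrac{1}{2}\, ST_{e}^{\mathbf{H}}(f^{\mathbf{H}})$
\end{center}
of Corollary 2.5 then gives $N(j,f^p) = T_{e}^{G}(\phi^{G}) + T_{e}^{H}(\phi^{H})$, as desired. All of the technical content has already been isolated in Corollary 2.5, Theorem 2.6, and Lemma 2.7; the final assertion is essentially bookkeeping showing that the constants conspire to absorb the endoscopic weight $1/2$, so there is no substantive obstacle at this step. The only thing worth highlighting is the compatibility of normalizations: one must be sure that the test functions $\phi^G$ and $\phi^H$ here are the same ones appearing in Theorem 2.6, namely those fixed in subsection 2.2.2 and associated to $f^{\mathbf{G}}$ and $f^{\mathbf{H}}$ via Labesse's conventions, so that the remark following Lemma 2.7 about the factors $d(G)$ and $d(H)$ on the archimedean component applies and no hidden rescaling is needed.
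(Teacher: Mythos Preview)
Your argument is correct and is exactly the paper's approach: the paper's own proof simply says ``This is a direct consequence of corollary 2.4, theorem 2.6, and lemma 2.7,'' and you have spelled out precisely that substitution and the arithmetic of the constants. One small correction: you should cite Corollary~2.4 rather than Corollary~2.5, since Corollary~2.5 is the special case $f^p = \mathbf{1}_{K^p}$, whereas Theorem~2.8 (and your own computation) is for the general $f^p$ fixed in subsection~2.2.2.
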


\begin{proof} This is a direct consequence of corollary 2.4, theorem 2.6, and lemma 2.7. 
\end{proof}

The next step in our project is to write the distributions $T^G_e(\phi^G)$ and $T^H_e(\phi^H)$ as explicitly as we can.

\section{Geometric Side of the Trace Formula}

In this section, we will take $G$ to be a reductive algebraic group, not necessarily connected, over $\mathbb{Q}$. Let $\mathcal{L}_{\mathbb{R}}$ be the set of Levi subsets of $G$ defined over $\mathbb{R}$; we will recall the definition of Levi subset in subsection 4.1 below.

\subsection{Definitions}

A function $f_{\mathbb{R}} \in \mathcal{H} (A_G(\mathbb{R})^0 \setminus G(\mathbb{R}))$ is called \textit{cuspidal} if for every $M \in \mathcal{L}_{\mathbb{R}}$, $M \neq G$, and for every $P \in \mathcal{P}(M)$, we have\medskip
\begin{center}
$\mathrm{tr} \pi(f_{\mathbb{R},P}) = 0$\medskip
\end{center}
for every irreducible tempered representation $\pi$ of $A_M(\mathbb{R})^0 \setminus M(\mathbb{R})$, where\medskip
\begin{center}
$f_{\mathbb{R},P}(m) := \delta_{P(\mathbb{R})}(m)^{1/2} \int_{K_{max,\mathbb{R}}} \int_{N_P(\mathbb{R})} f_{\mathbb{R}}(k^{-1}mnk)dndk.$\medskip
\end{center}

Further, a function $f_{\mathbb{R}} \in \mathcal{H}(A_G(\mathbb{R})^0 \setminus G(\mathbb{R}))$ is called \textit{stable cuspidal} if it is cuspidal and, in addition, we have\medskip
\begin{center}
$\mathrm{tr} \pi(f_{\mathbb{R}}) = 0$\medskip
\end{center}
for every irreducible tempered representation $\pi$ of $A_M(\mathbb{R})^0 \setminus M(\mathbb{R})$ that is not square-integrable, and\medskip
\begin{center}
$\mathrm{tr} \pi_{\mathbb{R},1}(f_{\mathbb{R}}) = \mathrm{tr} \pi_{\mathbb{R},2}(f_{\mathbb{R}})$\medskip
\end{center}
for any two square integrable representations $\pi_{\mathbb{R},1}$ and $\pi_{\mathbb{R},2}$ of $A_G(\mathbb{R})^0 \setminus G(\mathbb{R})$ that belong to the same $L$-packet.

A function $f_{\mathbb{R}} \in \mathcal{H} (A_G(\mathbb{R})^0 \setminus G(\mathbb{R}))$ is called \textit{very cuspidal} if it is invariant under conjugation by $K_{max, \mathbb{R}}$ and if for every $M \in \mathcal{L}_{\mathbb{R}}$ and every $P \in \mathcal{P}(M)$, we have\medskip
\begin{center}
$f_{\mathbb{R},P}(m) = \delta_{P(\mathbb{R})}(m)^{1/2} \int_{N_P(\mathbb{R})} f_{\mathbb{R}}(mn)dn = 0$\medskip
\end{center}
for all $m$ in $M(\mathbb{R})$.

A function $f_{\mathbb{R}}$ that is very cuspidal is also cuspidal, but need not be stable cuspidal.

Let $C \in \mathbb{R}_+$. A function $f' \in C^{\infty}_c(G(\mathbb{A}_f))$ is called $C$-\textit{regular} (resp. \textit{strongly $C$-regular}) if for every $M \in \mathcal{L}$, $M \neq G$, every $P \in \mathcal{P}(M)$, and $m \in M(\mathbb{A}_f)$ such that\medskip
\begin{center}
$f_P'(m) := \delta_{P(\mathbb{A}_f)}(m)^{1/2} \int_{K_{max, f}} \int_{N_P(\mathbb{A}_f)} f'(k^{-1}mnk)dndk \neq 0,$\medskip

\end{center}
there exists at least one root $\alpha$ of $A_M$ in $G$ for which (resp. for all roots $\alpha$ of $A_M$ in $G$) we have\medskip
\begin{center}
$\vert \alpha(H_{M,f}(m)) \vert > C$;\medskip
\end{center}
see subsection 4.1 for the definition of the morphism $H$.

Clearly, a strongly $C$-regular function is also $C$-regular.

We will occasionally employ `twisted' versions of these concepts. That is, we will use concepts like $C$-regularity in the context of the twisted trace formula for $G^0$. In that situation, Levi and parabolic subsets are assumed to range over the set of $\theta$-stable ones, which are the ones that correspond to the ones on $G$.

\subsection{Second Major Transition}

\begin{theorem} 
Let $\phi^G_{\mathbb{R}}$ be a function in $\mathcal{H}(A_G(\mathbb{R})^0 \setminus G(\mathbb{R}))$ that is stable cuspidal and very cuspidal. Then there exists a constant $C \in \mathbb{R}_+$ that only depends on the support of $\phi^G_{\mathbb{R}}$ and has the following property. For every function $\phi' \in C^{\infty}_c (G(\mathbb{A}_f))$ that is $C$-regular, we have\medskip
\begin{center}
$T^G_e(\phi^G_\mathbb{R}\phi') = J^G_{geom}(\phi^G_\mathbb{R}\phi').$\medskip
\end{center}
\end{theorem}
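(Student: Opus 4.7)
The plan is to expand $J^G_{geom}(\phi^G_{\mathbb{R}}\phi')$ according to Arthur's geometric expansion as a sum over Levi subsets $M \in \mathcal{L}$, and to show that under the hypotheses, only the contribution from $M = G$ with elliptic conjugacy classes survives; this surviving term is exactly $T^G_e(\phi^G_{\mathbb{R}}\phi')$.

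First, I would invoke Arthur's expansion
\[
J^G_{geom}(\phi) = \sum_{M \in \mathcal{L}} \frac{|W^M_0|}{|W^G_0|} \sum_{\gamma} a^M(S,\gamma) J_M^G(\gamma, \phi),
\]
where $J_M^G(\gamma, \phi)$ is the weighted orbital integral and $a^M(S,\gamma)$ is the global coefficient, supported on semisimple $\gamma$ that are elliptic in $M$. For $M = G$ the weight is trivial, $J_G^G(\gamma, \phi) = O_\gamma(\phi)$ is the ordinary orbital integral, and the resulting sum is precisely the definition of $T^G_e(\phi)$. It remains to show that every contribution with $M \neq G$ vanishes.

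For proper $M$, I would employ Arthur's splitting formula to decompose $J_M^G(\gamma, \phi^G_{\mathbb{R}}\phi')$ into a sum of products of weighted orbital integrals over Levi subsets $L_1, L_2 \in \mathcal{L}(M)$ of the constant terms $\phi^G_{\mathbb{R},L_1}$ and $\phi'_{L_2}$. The very cuspidal hypothesis on $\phi^G_{\mathbb{R}}$ forces $\phi^G_{\mathbb{R},P} = 0$ for every parabolic $P$ associated to a proper Levi; this collapses the splitting sum to the single term with $L_1 = G$, in which the archimedean factor becomes the ordinary orbital integral $O_\gamma(\phi^G_{\mathbb{R}})$ and the finite factor is a constant term $\phi'_P$ of $\phi'$ along some $P \in \mathcal{P}(M)$.

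The final step combines this factorization with the $C$-regularity hypothesis on $\phi'$. By definition, $\phi'_P(m) \neq 0$ requires that some root $\alpha$ of $A_M$ satisfy $|\alpha(H_{M,f}(m))| > C$; on the other hand, the archimedean factor $O_\gamma(\phi^G_{\mathbb{R}})$ is nonzero only when $\gamma$ lies in a compact subset of $M(\mathbb{R})$ determined entirely by the support of $\phi^G_{\mathbb{R}}$. By the product formula $\prod_v |\alpha(\gamma)|_v = 1$ applied to the rational element $\gamma \in M(\mathbb{Q})$, choosing $C$ larger than a bound depending only on this archimedean support makes the two conditions incompatible. Hence the $M \neq G$ contribution vanishes identically, which gives the desired equality.

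The main obstacle is making the splitting formula and the archimedean bound precise in the twisted setting, where $G$ is a connected component of $\tilde{G}$ and one must restrict throughout to $\theta$-stable Levi and parabolic subsets. Keeping track of these combinatorics, and of the precise dependence of the archimedean support on $\text{supp}(\phi^G_{\mathbb{R}})$, is the technical heart of the argument; the strategy closely follows Morel's adaptation of Laumon's treatment in the $\mathbf{GSp}(4)$ case.
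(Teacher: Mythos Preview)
Your proposal follows the same strategy as the paper, which simply cites Laumon's argument in \cite[p.~302]{L} and observes that it carries over to the non-connected setting. The overall architecture---Arthur's coarse geometric expansion, splitting formula to separate the archimedean and finite places, very cuspidality to collapse the archimedean side, and $C$-regularity plus the product formula to kill the proper-Levi contributions---is exactly Laumon's.

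One small inaccuracy: in the splitting formula, when very cuspidality forces $L_1 = G$, the surviving archimedean factor is the \emph{weighted} orbital integral $J_M^G(\gamma, \phi^G_{\mathbb{R}})$, not the ordinary one $O_\gamma(\phi^G_{\mathbb{R}})$; it is the \emph{finite} factor that collapses to the ordinary orbital integral $J_M^M(\gamma, \phi'_P) = O_\gamma(\phi'_P)$ since $L_2 = M$ is forced by $d^G_M(G,L_2) \neq 0$. This does not affect the conclusion, because the weighted orbital integral $J_M^G(\gamma, \phi^G_{\mathbb{R}})$ is still supported, as a function of $\gamma$, in a set controlled by $\mathrm{supp}(\phi^G_{\mathbb{R}})$, and that is all the product-formula step needs. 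Note also that the stable cuspidality hypothesis plays no role in this particular argument; it is carried along because it is needed elsewhere.
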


\begin{proof} The proof Laumon gives in the connected case, \cite[p.~302]{L} still works in the non-connected case.
\end{proof}

\section{Spectral Side of the Trace Formula}

The goal of this section is to write the spectral side of the non-invariant trace formula for $G$ and $H$ as explicitly as possible. This will permit us to combine the terms corresponding to the diagonal Levi subset of $G$ with those corresponding to the diagonal Levi subset of $H$.

The arguments in this section are adapted from the corresponding arguments in Laumon \cite{L}. The difference is that we are working in the setting of the non-connected sets $G$ and $H$, whereas Laumon is working in the setting of the connected groups $\mathbf{G}$ and $\mathbf{H}$. As it turns out, virtually all of Laumon's arguments carry over to our setting with little or no change. We will begin by giving an overview of how this comes about.

Laumon has two basic references for his work on the spectral side of the non-invariant trace formula. First, he adopts the basic expression from Arthur's ``The Invariant Trace Formula II. Global Theory'' \cite{A88II}. This paper gives a uniform treatment of both the connected and non-connected cases, and so the results there are directly applicable to our setting. By way of clarification, the principal result of \cite{A88II} is an invariant form of the trace formula. However, following Laumon, we are only interested in the non-invariant results that Arthur compiles as a preparation for his final derivation.
 
Second, Laumon makes a crucial use of certain distributions defined in terms of residues of Eisenstein series, and for this part of the argument, his basic reference is Arthur's 1993 paper ``On elliptic tempered characters'' \cite{A93}. Now, this paper is written in the connected setting, and so some remarks must be made about why the results in it are applicable in our setting. First, the definition of these crucial distributions is given in Arthur's earlier papers, such as ``Intertwining operators and residues I. Weighted character'' \cite{A89I}, which is the basic reference in the 1993 paper. Happily, \cite{A89I} is written in the general setting that includes the non-connected case. In particular, the definition and basic properties of these distributions are available to us. The second basic reference for the 1993 paper is ``Intertwining operators and residues II. Invariant distributions'' \cite{A89II}. This paper is again written in the connected setting. However, the reason for this restriction is well-understood: at the time, the trace Paley-Wiener theorem was not yet available for non-connected groups. It was established by Delorme-Mezo in 2008 \cite{DM}. In fact, Morel analyzes the situation completely in \cite[p.~128]{M}: all the results of Arthur's \cite{A89II} carry over to the non-connected setting, provided that one replaces characters with twisted characters and discrete representations at the real place with $\theta$-discrete ones. Consequently, it is easy to check that all the results we need from Arthur's 1993 paper are also available in the non-connected setting.

None of this should seem particularly surprising. From the start, Arthur took great pains to write his papers in a way that applies uniformly to the connected and non-connected cases. In fact, the only slightly subtle point concerns Laumon's Lemma 4.13 \cite[p.313]{L} which gives a finite Fourier expansion of the aforementioned distributions. The proof of this lemma uses the Harish-Chandra character formula, and to our knowledge, this formula has not been proven in the non-connected case for real groups. However, we can circumvent this problem by using a character identity that allows us to pass from twisted characters on the non-connected side to stable characters on the connected side, and this allows us to run Laumon's argument as before; see Lemma 4.8 below.

\subsection{Definitions}
\subsubsection{Parameter Spaces}
Let $F$ be a field, local or global, and let $\tilde{G}$ be a reductive algebraic group, not necessarily connected, over $F$. Fix a connected component $G$ of $\tilde{G}$, and assume that $G$ generates $\tilde{G}$ and that $G(F) \neq \varnothing$. Let $G^0$ be the connected component of 1 in $\tilde{G}$. Our focus shall be on $G$ and $G^0$ rather than on $\tilde{G}$.
We will consider two instances of these definitions:\medskip

\begin{center}
$G = R_{E/\mathbb{Q}}\mathbf{GU}(2,1)_{E} \rtimes \theta \subset R_{E/\mathbb{Q}}\mathbf{GU}(2,1)_{E} \rtimes \langle \theta \rangle = \tilde{G},$\medskip

$G^0 = R_{E/\mathbb{Q}}\mathbf{GU}(2,1)_{E} \rtimes 1,$\medskip
\end{center}
and\medskip
\begin{center}
$H = R_{E/\mathbb{Q}} \mathbf{G}(\mathbf{U}(1) \times \mathbf{U}(1,1))_{E} \rtimes \theta \subset R_{E/\mathbb{Q}} \mathbf{G}(\mathbf{U}(1) \times \mathbf{U}(1,1))_{E} \rtimes \langle \theta \rangle = \tilde{H},$\medskip

$H^0 = R_{E/\mathbb{Q}} \mathbf{G}(\mathbf{U}(1) \times \mathbf{U}(1,1))_{E} \rtimes 1$\medskip
\end{center}
where $\theta$ is the automorphism induced by the non-trivial element on Gal$(E/\mathbb{Q})$. Since $G$ is not a group, some care must be taken in defining the various group-theoretic notions such as Levi components and representations for $G$.

Following Arthur and Morel, we will adopt the following definitions; see, for example, \cite[p.~228]{A88W} and also \cite[p.~119]{M}. A \emph{parabolic subgroup} of $\tilde{G}$ is the normalizer in $\tilde{G}$ of a parabolic subgroup of $G^0$. A \emph{parabolic subset} of $G$ is a nonempty subset of $G$ that is equal to the intersection of $G$ with a parabolic subgroup of $\tilde{G}$. If $P$ is a parabolic subset of $G$, write $\tilde{P}$ for the subgroup of $\tilde{G}$ generated by $P$, and write $P^0$ for the intersection $\tilde{P} \cap G^0$.

Let $P$ be a parabolic subset of $G$. The \emph{unipotent radical} $N_P$ of $P$ is defined to be the unipotent radical of $P^0$. A \emph{Levi component} $M$ of $P$ is a subset of $P$ that is equal to $\tilde{M} \cap P$, where $\tilde{M}$ is the normalizer in $\tilde{G}$ of a Levi component $M^0$ of $P^0$. If $M$ is a Levi component of $P$, then $P = MN_P$.

A \emph{Levi subset} $M$ of $G$ is a Levi component of a parabolic subset of $G$. Let $\tilde{M}$ be the subgroup of $\tilde{G}$ generated by $M$, let $M^0 = \tilde{M} \cap G^0$. Let $A_M$ be the maximal split subtorus of the centralizer of $M$ in $M^0$. Then $A_M \subseteq A_{M^0}$, and in general, the containment is proper.

Let $M$ be a Levi subset of $G$. Arthur defines the $\mathbb{R}$-vector space\medskip
\begin{center}
$\mathfrak{a}_{M} = Hom(X^\ast(\tilde{M})_F, \mathbb{R}),$ \medskip
\end{center}
where $X^{\ast}(\tilde{M})_F$ is the group of characters of $\tilde{M}$ defined over $F$; \cite[p.~24]{A89I}. Note that
\begin{center}
$\mathfrak{a}_M \subseteq \mathfrak{a}_{M^0},$\medskip
\end{center}
and in general, the containment is proper. In the two cases we are concerned with, $F = \mathbb{Q}$, and
\begin{center}
$\mathfrak{a}_M \simeq \mathfrak{a}_{M^0}^\theta;$\medskip
\end{center}
the Arthur space of a Levi subset of $G$ is obtained by taking the $\theta$-invariants of the Arthur space of the corresponding Levi subgroup of $G^0$, and similarly for $H$; \cite[p.~120]{M}. In particular, if $T = T^0 \rtimes \theta$ is the diagonal Levi subset in either $G$ or $H$, we have\medskip
\begin{center}
$\mathfrak{a}_{T} \simeq \lbrace (y,(x, 0, -x)) \in \mathbb{R} \times \mathbb{R}^3 \rbrace.$\medskip
\end{center}
This is also the Arthur space of the diagonal Levi subgroup of $\mathbf{GU}(2,1)$ and of $\mathbf{G}(\mathbf{U}(1) \times \mathbf{U}(1,1))$. 

Now suppose that $F$ is a global field, and fix a finite set $S$ of inequivalent valuations on $F$. Then\medskip
\begin{center}
$F_S = \prod_{q \in S} F_q$\medskip
\end{center}
is a locally compact ring. We can regard $M$, $\tilde{M}$ and $M^0$ as schemes over $F_S$, and since $F$ embeds diagonally in $F_S$, we can form the corresponding sets of $M(F_S)$, $\tilde{M}(F_S)$, and $M^0(F_S)$ of $F_S$-valued points. Consider the homomorphism\medskip
\begin{center}
$H_{M, S}: \tilde{M}(F_S) \rightarrow \mathfrak{a}_M,$\medskip
\end{center}
which is defined by\medskip
\begin{center}
$e^{\langle H_{M,S}(x), \chi \rangle} = \vert \chi(x) \vert = \prod_{q \in S} \vert \chi(x_q) \vert_q,$\medskip
\end{center}
for any $x = \prod_{q \in S} x_q$ in $\tilde{M}(F_S)$ and $\chi$ in $X^\ast(\tilde{M})_F$.\medskip We will write\medskip
\begin{center}
$\mathfrak{a}_{M,S} = H_{M,S}(\tilde{M}(F_S))$\medskip
\end{center}
for the image of this homomorphism. If the set $S$ contains any of the infinite places of $F$, then $\mathfrak{a}_{M,S} = \mathfrak{a}_M$. If $S = \lbrace q \rbrace$ for a finite place $q$ of $F$, then $\mathfrak{a}_{M,S}$ is a lattice in $\mathfrak{a}_M$; see \cite[p.~50]{A89I}.

Further, we set\medskip
\begin{center}
$\mathfrak{a}^\vee_{M,S} = Hom(\mathfrak{a}_{M,S}, 2\pi \mathbb{Z}).$\medskip
\end{center}
Then\medskip
\begin{center}
$\mathfrak{a}^{\ast}_{M,S} := \mathfrak{a}^{\ast}_{M} / \mathfrak{a}^{\vee}_{M,S}$ \medskip
\end{center}
is the additive character group of $\mathfrak{a}_{M,S}$. It is a compact quotient of $\mathfrak{a}^{\ast}_M$ if $S$ does not contain any of the infinite places of $F$, and is equal to $\mathfrak{a}^{\ast}_M$ otherwise; \cite[pp.~24-5]{A89I}.

If $S$ is the set of all the finite places of $F$, we will write $\mathfrak{a}_{M,f}$ and $\mathfrak{a}^{\ast}_{M,f}$ for these two groups.

\subsubsection{Representations}

Let $\Pi(\tilde{M}(F_S))$ be the set of equivalence classes of irreducible admissible representations of $\tilde{M}(F_S)$. We define $\Pi(M(F_S))$ to be the subset of $\Pi(\tilde{M}(F_S))$ consisting of those classes $\pi$ whose restriction $\pi^0$ to $M^0(F_S)$ remains irreducible. Note that $\pi^0$ is invariant under the finite group $\tilde{M}(F_S)/M^0(F_S)$ and conversely, any irreducible representation of $M^0(F_S)$ that is invariant by this group is the $\pi^0$ for some $\pi$ in $\Pi(M(F_S))$; \cite[p.~27]{A89I}.

An admissible representation $\pi^0$ of $\mathbf{M}^0(\mathbb{A})$ is called $\theta$-\emph{stable} if $\pi^0 \simeq \pi^0 \circ \theta$. In that case, there exists an intertwining operator $A_{\pi^0}: \pi^0 \rightarrow \pi^0 \circ \theta$. We say that $A_{\pi^0}$ is \emph{normalized} if $A^2_{\pi^0} = 1$. If $\pi^0$ is irreducible and $\theta$-stable, then it has a normalized intertwining operator, by Schur's lemma. The data of a normalized intertwining operator $A_{\pi^0}$ on $\pi^0$ are equivalent to the data of a representation $\pi$ of $\tilde{\mathbf{M}}(\mathbb{A})$ extending $\pi^0$; \cite[p.~121]{M}.

Thus, in our case, the restriction of any $\pi$ in $\Pi(\mathbf{M}(\mathbb{A}))$ to $\mathbf{M}^0(\mathbb{A})$ is $\theta$-invariant, and any $\theta$-invariant irreducible representation of $\mathbf{M}^0(\mathbb{A})$ extends to a representation in $\Pi(\mathbf{M}(\mathbb{A}))$. In practice, we will start with a $\theta$-stable irreducible representation $\pi^0$ of $\mathbf{M}^0$ and choose a normalized intertwining operator $A_{\pi^0} : \pi^0 \rightarrow \pi^0 \circ \theta$. Accordingly, we can express the spectral side of the trace formula for $\mathbf{G}$ (for $\mathbf{H}$) in terms of representations in $\Pi(\mathbf{M}(\mathbb{A}))$, and also in terms of $\theta$-stable representations of $\mathbf{M}^0(\mathbb{A})$ for the various Levi subsets $\mathbf{M}$ of $\mathbf{G}$ (of $\mathbf{H}$).

We shall also have to consider certain equivalence classes of representations in $\Pi(M(F_S))$. First, there is an action of the finite group\medskip
\begin{center}
$\Xi_{M, S} = Hom(\tilde{M}(F_S)/M^0(F_S), \mathbb{C}^{\times})$\medskip
\end{center}
on $\Pi(\tilde{M}(F_S))$, which is given by\medskip
\begin{center}
$\pi_{\xi}(m) = \pi(m)\xi(\bar{m})$,\medskip
\end{center}
where $\pi \in \Pi(\tilde{M}(F_S))$, $\xi \in \Xi_{M,S}$, and $\bar{m}$ is the projection of $m$ onto $\tilde{M}(F_S)/M^0(F_S)$. This action preserves $\Pi(M(F_S))$, and we shall write $\lbrace \Pi(M(F_S)) \rbrace$ for the space of $\Xi_{M,S}$-orbits in $\Pi(M(F_S))$. In fact, $\Pi(M(F_S))$ is the subset of $\Pi(\tilde{M}(F_S))$ on which $\Xi_{M,S}$ acts freely; see \cite[p.~28]{A89I}. Further, if $\lbrace \pi \rbrace$ is a set of representatives of $\Xi_{M,S}$-orbits in $\Pi(M(F_S))$, the map\medskip
\begin{center}
$\lbrace \pi \rbrace \rightarrow \Pi(M^0(F_S)), \pi \mapsto \pi^0$\medskip
\end{center}
is a bijection from the set of fixed-point free orbits of $\Xi_{M,S}$ in $\Pi(\tilde{M}(F_S))$ onto the set of elements in $\Pi(M^0(F_S))$ that are invariant under $\tilde{M}(F_S)/M^0(F_S)$.

We shall write $\Pi_{temp}(M(F_S))$ and $\Pi_{unit}(M(F_S))$ for the subset of representations $\pi$ in $\Pi(M(F_S))$ such that $\pi^0$ is tempered and unitary, respectively, and similarly for $\Pi_{temp}(M(\mathbb{A}))$ and $\Pi_{unit}(M(\mathbb{A}))$.

Note that the disconnected group\medskip
\[\Xi_{M, \mathbb{A}} = \lim_{\substack{ \longrightarrow\\ S}}\Xi_{M,S}\]
acts freely on each of $\Pi_{temp}(M(\mathbb{A}))$ and $\Pi_{unit}(M(\mathbb{A}))$. We shall write  $\lbrace \Pi_{unit}(M(\mathbb{A})) \rbrace$ and  $\lbrace \Pi_{temp}(M(\mathbb{A})) \rbrace$ for the sets of orbits. Again, they correspond bijectively to the sets of representations of $M^0(\mathbb{A})$ obtained by restriction.

Similar definitions can be made for $M(\mathbb{A})^1$ in place of $M(\mathbb{A})$. The terms on the spectral side of the trace formula will depend on representations in $\Pi_{unit}(M(\mathbb{A})^1)$. We shall identify a representation $\pi \in \Pi_{unit}(M(\mathbb{A})^1)$ with the corresponding orbit\medskip
\begin{center}
$\lbrace \pi_{\mu} : \mu \in i\mathfrak{a}^\ast_M \rbrace$\medskip
\end{center}
of $i\mathfrak{a}^\ast_M$ in $\Pi_{unit}(M(\mathbb{A}))$; see \cite[p.~506]{A88II} for more details.

To any representation $\pi \in \Pi(M(\mathbb{A}))$ we can associate the induced representation $\pi^G$ of $\tilde{G}(\mathbb{A})$. Let $\nu_{\pi}$ denote the infinitesimal character of its Archimedean component. We shall write $\Pi_{unit}(M(\mathbb{A})^1, t)$ for the set of representations $\pi \in \Pi_{unit}(M(\mathbb{A})^1)$ such that\medskip
\begin{center}
$\Vert Im(\nu_{\pi}) \Vert = t$,\medskip
\end{center}
where $Im(\nu_{\pi})$ is the imaginary part of $\nu_{\pi}$; see \cite[p.~515]{A88II} for more details.

As Arthur explains in \cite[p.~517]{A88II}, formula (4.4), the part of the spectral side of the trace formula for $G$ indexed by a fixed $t$ can be written as\medskip
\[ \sum_{\pi \in \lbrace \Pi_{unit}(G(\mathbb{A})^1, t)\rbrace} a^G_{disc}(\pi) f_G(\pi), \]\medskip
a finite linear combination of characters. This defines a complex-valued function\medskip
\begin{center}
$a_{disc}(\pi) := a^G_{disc}(\pi), \pi \in \lbrace\Pi_{unit}(G(\mathbb{A})^1, t)\rbrace$,\medskip
\end{center}
which is the primary global datum for the spectral side of the trace formula. Arthur then focuses on a subset of $\lbrace \Pi_{unit}(G(\mathbb{A})^1, t)\rbrace$ that contains the support of $a^G_{disc}$, defined as follows. We shall write $\Pi_{disc}(G, t)$ for the subset of $\Xi_{\mathbb{A}}$-orbits in $\lbrace \Pi_{unit}(G(\mathbb{A})^1, t)\rbrace$ that are represented by irreducible constituents of induced representations\medskip
\begin{center}
$\sigma^G_{\lambda}, M \in \mathcal{L}, \sigma \in \Pi_{unit}(M(\mathbb{A})^1, t), \lambda \in i\mathfrak{a}^\ast_M / i\mathfrak{a}^\ast_G$,\medskip
\end{center}
where $\sigma_{\lambda}$ satisfies the following two conditions:

(i) $a^M_{disc}(\sigma) \neq 0$,

(ii) There is an element $s \in W^G(\mathfrak{a}_M)_{reg}$ such that $s\sigma_{\lambda} = \sigma_{\lambda}.$

For each $L \in \mathcal{L}_{\mathbb{R}}$, we will write $\Pi_2(A_L(\mathbb{R})^0 \setminus L(\mathbb{R})) \subset \Pi_{temp}(A_L(\mathbb{R})^0 \setminus L(\mathbb{R}))$ for the set of equivalence classes of square-integrable representations of $A_L(\mathbb{R})^0 \setminus L(\mathbb{R})$. Finally, for each $M \in \mathcal{L}$, we will write $\Pi_{disc}(A_M(\mathbb{R})^0 \setminus M(\mathbb{R}))$ for the set of equivalence classes of irreducible unitary representations $\pi$ of $A_M(\mathbb{R})^0 \setminus M(\mathbb{R})$ that occur discretely, with multiplicity $m^M_{disc}(\pi)$ in\medskip
\begin{center}
$L^2(A_M(\mathbb{R})^0M(\mathbb{Q}) \setminus M(\mathbb{A})).$
\end{center}

\subsubsection{Weighted Characters}

Let $\mathcal{H}(G(F_S))$ be the space of smooth, compactly supported functions on $G(F_S)$ whose left and right translates by $K$ span a finite dimensional space; \cite[p.~44]{A89I}.
Weighted characters are certain linear functionals on $\mathcal{H}(G(F_S))$, defined by Arthur in terms of normalized intertwining operators; see \cite{A81}. It turns out that they describe the terms in the trace formula arising from Eisenstein series \cite{A82II}.

Let $M$ be a Levi subset of $G$.  For every $f \in \mathcal{H}(G(F_S))$ and every $\pi$ in $\Pi(M(F_S))$, Arthur defines the \emph{weighted character} $J_M(\pi_{\lambda})$ by\medskip
\begin{center}
$J_M(\pi_{\lambda}, f) =$ tr$(\mathcal{R}_M(\pi_{\lambda}, P_0) \mathcal{I}_{P_0}(\pi_{\lambda},f)),$
\end{center}
where\medskip
\begin{center}
$\mathcal{R}_M(\pi_{\lambda}, P_0) = lim_{\nu \rightarrow 0} \sum_{P \in \mathcal{P}(M)} \mathcal{R}_P(\nu, \pi_{\lambda}, P_0) \theta_P(\nu)^{-1},$\medskip
\end{center}
in the notation of \cite[Sect.~6]{A81}. They are independent of $P_0$.

It is not actually the distributions $J_M(\pi_{\lambda})$ that occur in the trace formula, but rather their integrals over $\pi$.

Suppose that $\pi \in \Pi(M(F_S))$ is such that $J_M(\pi_{\lambda}, f)$ is regular for $\lambda \in i \mathfrak{a}^\ast_M$. This holds, for example, when $\pi$ is unitary; \cite[p.~50]{A89I}. Then if $X \in \mathfrak{a}_{M,S}$, Arthur defines\medskip
\begin{center}
$J_M(\pi, f, X) = \int_{i\mathfrak{a}_{M,S}^\ast} J_M(\pi_{\lambda}, f)e^{-\lambda(X)}d\lambda$;
\end{center}
see \cite[p.~50]{A89I}.

We will focus on the case where $F = \mathbb{Q}$ and $S = \lbrace \infty \rbrace$, so that $F_S = \mathbb{R}$, and where $\pi = \pi_{\mathbb{R}}$ is unitary. Following Laumon, we will take our test functions $f_{\mathbb{R}}$ to lie in $\mathcal{H}(A_G(\mathbb{R})^0 \setminus G(\mathbb{R}))$, and so we may write\medskip
\begin{center}
$J^G_M(\pi_\mathbb{R}, f_\mathbb{R}, X) := J_M(\pi_\mathbb{R}, f_\mathbb{R}, X) = \int_{i(\mathfrak{a}^G_M)^\ast} J^G_M(\pi_{\mathbb{R},\lambda}, f_\mathbb{R})e^{-\lambda(X)}d\lambda,$\medskip
\end{center}
where we take $X \in \mathfrak{a}^G_M$. Finally, for $f' \in C^{\infty}_c(G(\mathbb{A}_f))$, we set\medskip
\begin{center}
$J^G_{M,\pi}(f_\mathbb{R}f') = \sum_{X \in \mathfrak{a}_{M,f}} J^G_M(\pi_\mathbb{R}, f_\mathbb{R}, X)f'_M(\pi_f, X);$\medskip
\end{center}
see \cite[pp.~304-5]{L}.

\subsection{Simplifications on the Spectral Side}

Our starting point is the following expression for the spectral side of the non-invariant trace formula. This expression is available for a connected $\mathbf{G}$ as well as a non-connected $G$ of the form we are considering in this paper; see \cite[Sect.~4]{A88II}. In the connected case, the summation below is over Levi subgroups $\mathbf{M}$ of $\mathbf{G}$, while in the non-connected case, the summation is over Levi subsets $M$ of $G$.

\begin{theorem} 
Let $\phi_\mathbb{R}$ be a function in $\mathcal{H}(A_G(\mathbb{R})^0 \setminus G(\mathbb{R}))$ that is very cuspidal, and let $\phi'$ be any function in $C^{\infty}_{c}(G(\mathbb{A}_f))$. Then\medskip

\begin{center}
$J^G_{spec}(\phi_\mathbb{R}\phi') = \sum_{t \geqslant 0} \sum_{M \in \mathcal{L}} \vert W^M_0\vert \vert W^G_0 \vert^{-1} \sum_{\pi \in \Pi_{disc}(M, t)} a^{M}_{disc}(\pi) J^G_{M,\pi}(\phi_\mathbb{R}\phi')$\medskip
\end{center}
where $\Pi_{disc}(M,t) \subset \Pi_{unit}(A_M(\mathbb{R})^0 \setminus M(\mathbb{A}), t)$ and\medskip
\begin{center}
$J^G_{M, \pi}(\phi_{\mathbb{R}}\phi') = \sum_{X \in \mathfrak{a}_{M,f}}J^G_M(\pi_{\mathbb{R}}, \phi_{\mathbb{R}}, s(X))\phi'_M(\pi_{f}, X)$\medskip
\end{center}
for every $M \in \mathcal{L}$ and every $\pi = \pi_{\mathbb{R}} \otimes \pi_{f} \in \Pi_{disc}(M,t)$, where\begin{center}\medskip
$s(X) = \left( \sum_qX_q \right)^G  \in \mathfrak{a}^G_M$\medskip
\end{center}
for every $X \in \mathfrak{a}_{M,f}$.
\end{theorem}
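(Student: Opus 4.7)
The plan is to reduce to Arthur's general spectral expansion in \cite[Sect.~4]{A88II} and then carry out the two specializations that Laumon performs in \cite[pp.~304--5]{L}: factorize the test function in the Archimedean/finite decomposition, and convert an Archimedean integral into a lattice sum. Because \cite[Sect.~4]{A88II} is written uniformly in the connected and non-connected cases, its basic spectral expansion
\[
J^G_{spec}(f) = \sum_{t \geqslant 0} \sum_{M \in \mathcal{L}} \vert W^M_0 \vert \vert W^G_0 \vert^{-1} \sum_{\pi \in \Pi_{disc}(M,t)} a^M_{disc}(\pi) \int_{i\mathfrak{a}^\ast_M / i\mathfrak{a}^\ast_G} J^G_M(\pi_\lambda, f)\, d\lambda
\]
is applicable verbatim to our non-connected $G$, with $\Pi_{disc}(M,t)$ and $a^M_{disc}$ interpreted as in subsection 4.1. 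This is the starting point.

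First I would specialize to $f = \phi_\mathbb{R}\phi'$ and use the factorization of weighted characters across the Archimedean/finite decomposition: if $\pi = \pi_\mathbb{R} \otimes \pi_f$, then $J^G_M(\pi_\lambda, \phi_\mathbb{R}\phi') = J^G_M(\pi_{\mathbb{R},\lambda}, \phi_\mathbb{R})\,\phi'_M(\pi_{f,\lambda})$. The regularity of $\lambda \mapsto J^G_M(\pi_{\mathbb{R},\lambda}, \phi_\mathbb{R})$ on $i\mathfrak{a}^\ast_M$ needed in order to apply the Fourier inversion of subsection 4.1.3 is automatic, since $\pi$ is unitary (\cite[p.~50]{A89I}).

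Second I would convert the integral over $i(\mathfrak{a}^G_M)^\ast$ into a sum over the lattice $\mathfrak{a}_{M,f} \subset \mathfrak{a}_M$. The key input is Fourier inversion for the dual pair $(i(\mathfrak{a}^G_M)^\ast, \mathfrak{a}_{M,f})$, combined with the definition of the truncated weighted character $J^G_M(\pi_\mathbb{R}, \phi_\mathbb{R}, X)$ from subsection 4.1.3. The very cuspidality hypothesis on $\phi_\mathbb{R}$ enters here to guarantee the decay and integrability of $\lambda \mapsto J^G_M(\pi_{\mathbb{R},\lambda}, \phi_\mathbb{R})$, while the compactness of the support of $\phi'$ forces $X \mapsto \phi'_M(\pi_f, X)$ to be finitely supported. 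The resulting expression
\[
\sum_{X \in \mathfrak{a}_{M,f}} J^G_M(\pi_\mathbb{R}, \phi_\mathbb{R}, s(X))\,\phi'_M(\pi_f, X)
\]
is therefore a finite sum, and substituting it back into Arthur's spectral expansion produces the identity claimed.

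The main obstacle I expect is not any single calculation but the ambient bookkeeping required to check that every ingredient imported from the connected setting still makes sense here: that weighted characters $J^G_M$, discrete coefficients $a^M_{disc}$, and the parameter set $\Pi_{disc}(M,t)$ are well-defined on $G$, and that the tensor factorization of induced characters is valid for $\pi \in \Pi(M(\mathbb{A}))$ rather than $\Pi(M^0(\mathbb{A}))$. All of this is supplied by Arthur's framework in \cite{A89I} and \cite{A88II}, reinforced by Morel's analysis in \cite[p.~128]{M}; once these are in place the argument reduces to invoking Laumon's connected-case manipulation step by step.
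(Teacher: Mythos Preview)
Your approach is the same as the paper's: it simply cites Laumon \cite[pp.~305--6]{L}, and your outline is a faithful expansion of what happens there, starting from Arthur's spectral expansion in \cite[Sect.~4]{A88II} and passing to the lattice sum via Fourier inversion.

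One point deserves sharpening. The factorization you assert,
\[
J^G_M(\pi_\lambda, \phi_\mathbb{R}\phi') = J^G_M(\pi_{\mathbb{R},\lambda}, \phi_\mathbb{R})\,\phi'_M(\pi_{f,\lambda}),
\]
is not automatic: weighted characters do \emph{not} factor over places in general. What makes it true here is Arthur's splitting formula for weighted characters (a sum over pairs $(L_1,L_2)\in\mathcal{L}(M)^2$ weighted by $d^G_M(L_1,L_2)$), which collapses to the single term $(L_1,L_2)=(G,M)$ precisely because $\phi_{\mathbb{R},Q}=0$ for every proper parabolic $Q$ --- that is, because $\phi_\mathbb{R}$ is very cuspidal. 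So the very cuspidality hypothesis is doing its real work in this factorization step, not (or not only) in the integrability of $\lambda\mapsto J^G_M(\pi_{\mathbb{R},\lambda},\phi_\mathbb{R})$ as you suggest. Once this is made explicit, the rest of your argument goes through as written.
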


\begin{proof} Laumon \cite[pp.~305-6]{L}.
\end{proof}

We now want to write the spectral side in a more explicit form.

In \cite[Sect.~8]{A89I}, Arthur introduces the distributions\medskip
\begin{center}
$^{c}D^G_M(\pi_{\mathbb{R}}, X, \phi_\mathbb{R})$\medskip
\end{center}
for $M \in \mathcal{L}_\mathbb{R}$, $\pi_\mathbb{R} \in \Pi_{temp}(A_G(\mathbb{R})^0 \setminus M(\mathbb{R}))$, $X \in \mathfrak{a}^G_M$, and $\phi_\mathbb{R} \in \mathcal{H}(A_G(\mathbb{R})^0 \setminus G(\mathbb{R}))$.

These distributions are defined in terms of residues of certain integrals, ultimately in terms of residues of Eisenstein series. They are defined in exactly the same way whether $G$ is connected or not, and their basic properties are valid in both cases. In this paper, their usefulness stems from the following two results.

\begin{Proposition}
Let $M \in \mathcal{L}_{\mathbb{R}}$ be a Levi subset of $G$, and let $\phi_\mathbb{R}$ be a function in $\mathcal{H}(A_G(\mathbb{R})^0 \setminus G(\mathbb{R}))$ that is cuspidal. Then there exists a family of constants $C_L \in \mathbb{R}_+ (L \in \mathcal{L}_\mathbb{R}(M), L\neq G)$ that only depend on the support of $\phi_\mathbb{R}$ and have the following property. For each $\pi_\mathbb{R} \in \Pi_{temp}(A_G(\mathbb{R})^0 \setminus M(\mathbb{R}))$ and for each $X \in \mathfrak{a}^G_M$ such that $\Vert X_L\Vert > C_L$ for all $L \in \mathcal{L}_\mathbb{R}(M)$, $L \neq G$, we have\medskip
\begin{center}
$J^G_M(\pi_\mathbb{R}, \phi_\mathbb{R}, X) =$ $^{c}D^G_M(\pi_\mathbb{R}, X, \phi_\mathbb{R}).$\medskip
\end{center}
\end{Proposition}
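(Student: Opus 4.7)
The plan is to follow Arthur's contour-shift argument from Section 8 of \cite{A89I}, which is already written in a setting that covers our non-connected $G$. The starting point is the definition
\[
J^G_M(\pi_\mathbb{R}, \phi_\mathbb{R}, X) = \int_{i(\mathfrak{a}^G_M)^\ast} J^G_M(\pi_{\mathbb{R},\lambda}, \phi_\mathbb{R}) \, e^{-\lambda(X)} \, d\lambda,
\]
where $\lambda \mapsto J^G_M(\pi_{\mathbb{R},\lambda}, \phi_\mathbb{R})$ is initially defined on the imaginary axis but extends meromorphically to $(\mathfrak{a}^G_M)^\ast_\mathbb{C}$ because of the meromorphic behavior of the normalized intertwining operators $\mathcal{R}_M(\pi_\lambda, P_0)$ that enter Arthur's definition of the weighted character.

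The first step is to deform the contour $i(\mathfrak{a}^G_M)^\ast$ into a shifted real affine subspace of $(\mathfrak{a}^G_M)^\ast_\mathbb{C}$ chosen so that $e^{-\lambda(X)}$ decays strongly in the direction dictated by $X$. This deformation is legal precisely when $X$ is far enough from the walls associated with each intermediate $L \in \mathcal{L}_\mathbb{R}(M)$ with $L \neq G$, which is the content of the hypothesis $\|X_L\| > C_L$. The constants $C_L$ come from the support of $\phi_\mathbb{R}$: the analytic estimates on the intertwining operators, together with Paley–Wiener bounds on $\phi_\mathbb{R}$, allow the shift provided $X$ dominates a certain norm determined by $\mathrm{supp}(\phi_\mathbb{R})$.

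As the contour is shifted, one crosses the singular hyperplanes of $J^G_M(\pi_{\mathbb{R},\lambda}, \phi_\mathbb{R})$, and picks up a sum of residues indexed by chains of Levi subsets $L \in \mathcal{L}_\mathbb{R}(M)$. By construction, the total contribution of these residues is Arthur's distribution $^{c}D^G_M(\pi_\mathbb{R}, X, \phi_\mathbb{R})$; see \cite[Sect.~8]{A89I}. The cuspidality of $\phi_\mathbb{R}$ is what kills the leftover shifted integral: cuspidality forces the constant terms $\phi_{\mathbb{R}, P}$ along every proper parabolic subset to vanish, which in turn makes the asymptotic integral along the shifted contour vanish, leaving only the residue sum.

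The main obstacle is verifying that Arthur's residue machinery transports cleanly from the connected to the non-connected setting for $G = R_{E/\mathbb{Q}}\mathbf{G}_E \rtimes \theta$. The definitions of $\mathcal{R}_M(\pi_\lambda, P_0)$ and of $^{c}D^G_M$ in \cite{A89I} are written so as to accommodate non-connected groups, and, as the paper already emphasizes, the basic properties of these distributions remain valid in both cases. The required Paley–Wiener input for cuspidal functions is supplied in the non-connected real case by Delorme–Mezo \cite{DM}, as recorded in the discussion preceding Theorem~3.1. With these two ingredients in place, Arthur's contour-shift proof carries over verbatim, and the proposition follows.
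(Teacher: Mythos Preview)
Your proposal is correct and follows essentially the same approach as the paper, which simply cites Laumon \cite[pp.~306--7]{L}; Laumon's argument there is precisely the Arthur contour-shift/residue computation from \cite[Sect.~8]{A89I} that you have sketched. One small correction: the discussion of Delorme--Mezo and the transfer of Arthur's results to the non-connected setting appears at the opening of Section~4, not before Theorem~3.1.
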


\begin{proof} Laumon \cite[pp.306-7]{L}.
\end{proof}

The interest of replacing the distributions $J^G_M$ with the distributions $^cD^G_M$ is that the latter can be computed explicitly in virtue of the following proposition. The heart of the result is a recursion relation which shows that the numbers $^cD^G_M$ are the coefficients in a linear relation among the normalized characters of $G$ and those of its Levi subsets. The next proposition is stated in Laumon for a connected group $G$, but it remains true for a non-connected set. The only change required is that the summation will be over Levi subsets rather than Levi subgroups.

\begin{Proposition}
Let $\phi_\mathbb{R}$ be a function in $\mathcal{H}(A_G(\mathbb{R})^0 \setminus G(\mathbb{R}))$ that is cuspidal. Suppose that the support of\medskip
\begin{center}
$\phi_{\mathbb{R}, G}: \Pi_{temp}(A_G(\mathbb{R})^0 \setminus G(\mathbb{R})) \rightarrow \mathbb{C}, \sigma_\mathbb{R} \mapsto  \mathrm{tr}( \sigma_\mathbb{R}(\phi_\mathbb{R})),$\medskip
\end{center}
is contained in $\Pi_2(A_G(\mathbb{R})^0 \setminus G(\mathbb{R}))$. Then for every $M \in \mathcal{L}_\mathbb{R}$, $X \in \mathfrak{a}^G_M$, and $\pi_\mathbb{R} \in \Pi_{temp}(A_M(\mathbb{R})^0 \setminus M(\mathbb{R}))$, we have\medskip
\begin{center}
$^{c}D^G_M(\pi_\mathbb{R}, X, \phi_\mathbb{R}) \neq 0$\medskip
\end{center}
only if $\pi_\mathbb{R} \in \Pi_2(A_M(\mathbb{R})^0 \setminus M(\mathbb{R}))$. 
In that case,\medskip
\begin{center}
$^{c}D^G_M(\pi_\mathbb{R}, X, \phi_\mathbb{R}) = \sum_{\sigma_{\mathbb{R}}}$ $^{c}D^G_M(\pi_\mathbb{R}, X, \sigma_\mathbb{R}) \mathrm{tr}(\sigma_\mathbb{R}(\phi_\mathbb{R}))$\medskip
\end{center}
where $\sigma_\mathbb{R}$ ranges over $\Pi_2(A_G(\mathbb{R})^0 \setminus G(\mathbb{R}))$, and the expressions\medskip
\begin{center}
$^{c}D^G_M(\pi_\mathbb{R}, X, \sigma_\mathbb{R})$\medskip
\end{center}
for a fixed $\sigma_\mathbb{R} \in \Pi_2(A_G(\mathbb{R})^0 \setminus G(\mathbb{R}))$ and for $M \in \mathcal{L}_\mathbb{R}$, $\pi_\mathbb{R} \in \Pi_2(A_M(\mathbb{R})^0 \setminus M(\mathbb{R}))$ and $X \in \mathfrak{a}^G_M$ satisfy the following properties:

(i) $^cD^G_G(\pi_\mathbb{R}, 0, \sigma_\mathbb{R}) = \delta_{\pi_\mathbb{R}, \sigma_\mathbb{R}}$.

(ii) If $M \subset G$, $M \neq G$, for every $\gamma \in M(\mathbb{R})^1 \cap G_{reg}(\mathbb{R})$ and for every $X \in \mathfrak{a}^G_M$, we have\medskip
\begin{center}
$\sum_L \sum_{\rho_\mathbb{R}} (-1)^{dim(A_L/A_G)}$ $^cD^G_L(\rho_\mathbb{R}, X_L, \sigma_\mathbb{R})\Phi^L_M(\rho_\mathbb{R}^\vee, \gamma e^{X^L})=0,$\medskip
\end{center}
where $L$ ranges over $\mathcal{L}_\mathbb{R}(M)$, $\rho_\mathbb{R}$ ranges over $\Pi_2(A_L(\mathbb{R})^0 \setminus L(\mathbb{R}))$, and\medskip
\begin{center}
$X = X^L + X_L \in \mathfrak{a}^L_M \oplus \mathfrak{a}^G_L = \mathfrak{a}^G_M$.
\end{center}
\end{Proposition}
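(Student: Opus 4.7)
The plan is to adapt Laumon's proof of the analogous statement in the connected case (Lemma 4.13 of \cite{L}) to the non-connected setting, organizing the argument in three parts: expressing $\phi_\mathbb{R} \mapsto {}^cD^G_M(\pi_\mathbb{R}, X, \phi_\mathbb{R})$ as a finite linear combination of characters of square-integrable representations, identifying the coefficients in closed form, and verifying the recursion (ii).

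First I would unwind the definition of ${}^cD^G_M$ in terms of residues of normalized intertwining operators from \cite{A89I}, so that its dependence on $\phi_\mathbb{R}$ factors through the values $\mathrm{tr}(\sigma_\mathbb{R}(\phi_\mathbb{R}))$ for $\sigma_\mathbb{R} \in \Pi_{temp}(A_G(\mathbb{R})^0 \setminus G(\mathbb{R}))$. This is a trace-Paley-Wiener type step; in the non-connected setting it is supplied by Delorme--Mezo \cite{DM}. Combining cuspidality of $\phi_\mathbb{R}$ with the hypothesis that $\phi_{\mathbb{R},G}$ is supported on $\Pi_2$, the expansion collapses to a sum over $\sigma_\mathbb{R} \in \Pi_2(A_G(\mathbb{R})^0 \setminus G(\mathbb{R}))$, giving the claimed decomposition. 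The vanishing of ${}^cD^G_M(\pi_\mathbb{R}, X, \phi_\mathbb{R})$ for $\pi_\mathbb{R} \notin \Pi_2$ then follows from the same residue analysis: only square-integrable parameters on $M$ survive once $\phi_\mathbb{R}$ kills the non-discrete tempered spectrum on proper Levi subsets. Property (i) I would read off directly from the definition at $M = G$ and $X = 0$, where no residues are taken and the distribution reduces to $\mathrm{tr}(\pi_\mathbb{R}(\phi_\mathbb{R}))$, so that pairing with $\sigma_\mathbb{R} \in \Pi_2$ gives $\delta_{\pi_\mathbb{R},\sigma_\mathbb{R}}$.

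The hard part will be property (ii). In Laumon's argument the recursion is obtained by evaluating a suitable virtual tempered character at a regular semisimple $\gamma$ and invoking the Harish-Chandra character formula to express normalized induced characters in terms of the functions $\Phi^L_M$ on Levi subsets. For non-connected real groups the Harish-Chandra formula is not known, which is precisely the difficulty flagged in the introduction. The workaround will be to invoke the character identity of Lemma 4.8 below, which transfers twisted characters on our non-connected side to stable characters on the connected side where the Harish-Chandra formula does apply; the recursion can then be derived on the connected side and transported back. The combinatorial structure of (ii), namely the sum over $L \in \mathcal{L}_\mathbb{R}(M)$ and $\rho_\mathbb{R}$ together with the splitting $X = X^L + X_L$, is preserved by this transfer because it respects parabolic induction and the orthogonal decomposition $\mathfrak{a}^G_M \simeq \mathfrak{a}^L_M \oplus \mathfrak{a}^G_L$. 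Uniqueness of ${}^cD^G_M(\pi_\mathbb{R}, X, \sigma_\mathbb{R})$ as the solution of the recursion with initial condition (i) then completes the proof.
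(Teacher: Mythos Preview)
Your proposal conflates this proposition with a later result. The analogue in \cite{L} is not Lemma 4.13 (p.~313) but the material on pp.~307--308, which is what the paper actually cites. These are distinct: pp.~307--308 establish the abstract recursion relating the ${}^cD^G_M$ to normalized characters, while Laumon's Lemma 4.13 is the finite Fourier expansion that appears in this paper as Lemma 4.10.

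This misidentification leads you to overcomplicate part (ii). The recursion there follows from Arthur's general framework in \cite{A89I} and \cite{A93}; Laumon's argument on pp.~307--308 does not invoke the Harish-Chandra character formula, and the paper states just before this proposition that the only change needed in the non-connected case is to sum over Levi subsets rather than Levi subgroups. The Harish-Chandra difficulty, and the character-identity workaround you describe, is reserved by the paper for Lemma 4.10 (Laumon's Lemma 4.13), where explicit character values $\Phi^L_M$ must actually be computed. Here the paper's proof is simply the one-line citation to Laumon, pp.~307--308. Your first paragraph on the trace Paley-Wiener step and property (i) is broadly in the right spirit, but the detour through the twisted-to-stable transfer for (ii) is not needed at this stage.
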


\begin{proof} Laumon, \cite[p.~307-8]{L}.
\end{proof}

Since the trace formula for the non-connected set $G^0 \rtimes \theta$ is just a way of writing the twisted trace formula for $G^0$, we can reformulate the foregoing proposition in terms of twisted characters. Essentially, we will replace representations of $G$ with $\theta$-stable representations of $G^0$ of the same type, traces with twisted traces, and characters with twisted characters.
This reformulation allows us to utilize the results in Morel \cite{M}.

\begin{cor}
Let $f_\mathbb{R}$ be a function in $\mathcal{H}(A_{G^0}(\mathbb{R})^0 \setminus G^0(\mathbb{R}))$ that is cuspidal. Suppose that the support of\medskip
\begin{center}
$f_{\mathbb{R}, G}: \Pi_{\theta -temp}(A_{G^0}(\mathbb{R})^0 \setminus G^0(\mathbb{R})) \rightarrow \mathbb{C}, \sigma^0_\mathbb{R} \mapsto  \mathrm{tr}(\sigma^0_\mathbb{R}(f_\mathbb{R})A_{\sigma_{\mathbb{R}}}),$\medskip
\end{center}
is contained in $\Pi_{\theta -2}(A_{G^0}(\mathbb{R})^0 \setminus G^0(\mathbb{R}))$. Then for every $M^0 \in \mathcal{L}^{\theta}_\mathbb{R}$, the set of $\theta$-stable Levi subgroups of $G^0$, for $X \in (\mathfrak{a}^{G^0}_{M^0})^{\theta}$, and for $\pi^0_\mathbb{R} \in \Pi_{\theta -temp}(A_{M^0}(\mathbb{R})^0 \setminus M^0(\mathbb{R}))$, we have\medskip
\begin{center}
$^cD^{G^0}_{M^0}(\lbrace \pi^0_\mathbb{R}, A_{\pi^0_\mathbb{R}} \rbrace, X, f_\mathbb{R}) \neq 0$\medskip
\end{center}
\textit{only if $\pi^0_\mathbb{R} \in \Pi_2(A_{M^0}(\mathbb{R})^0 \setminus M^0(\mathbb{R}))$. 
In that case},\medskip
\begin{center}
$^cD^{G^0}_{M^0}(\lbrace \pi^0_\mathbb{R}, A_{\pi^0_\mathbb{R}} \rbrace, X, f_\mathbb{R}) = \sum_{\sigma^0_{\mathbb{R}}} \sum_{A_{\sigma^0_{\mathbb{R}}}}$ $^cD^{G^0}_{M^0}(\pi_\mathbb{R}, X, \lbrace\sigma^0_\mathbb{R}, A_{\sigma^0_{\mathbb{R}}}\rbrace)  \mathrm{tr}(\sigma^0_\mathbb{R}(f_\mathbb{R}) A_{\sigma^0_{\mathbb{R}}})$\medskip
\end{center}
where $\sigma^0_\mathbb{R}$ ranges over $\Pi_{\theta -2}(A_{G^0}(\mathbb{R})^0 \setminus G^0(\mathbb{R}))$, and the expressions\medskip
\begin{center}
$^cD^{G^0}_{M^0}(\lbrace \pi^0_\mathbb{R}, A_{\pi^0_\mathbb{R}} \rbrace, X, \lbrace\sigma^0_\mathbb{R}, A_{\sigma^0_{\mathbb{R}}}\rbrace)$\medskip
\end{center}
for a fixed $\sigma^0_\mathbb{R} \in \Pi_{\theta -2}(A_{G^0}(\mathbb{R})^0 \setminus G^0(\mathbb{R}))$, fixed normalized intertwining operator $A_{\sigma^0_{\mathbb{R}}}$, $M^0 \in \mathcal{L}^{\theta}_\mathbb{R}$, $\pi_\mathbb{R} \in \Pi_{\theta -2}(A_{M^0}(\mathbb{R})^0 \setminus M^0(\mathbb{R}))$, and $X \in (\mathfrak{a}^{G^0}_{M^0})^{\theta}$ satisfy the following properties:

(i) $^cD^{G^0}_{G^0}(\lbrace \pi^0_\mathbb{R}, A_{\pi^0_\mathbb{R}} \rbrace, 0, \lbrace \sigma^0_\mathbb{R}, A_{\sigma^0_\mathbb{R}} \rbrace) = \delta_{\pi_\mathbb{R}, \sigma_\mathbb{R}}$, where $\pi_{\mathbb{R}}$ and $\sigma_{\mathbb{R}}$ are the representations of $G$ that correspond to the data $\lbrace \pi^0_\mathbb{R}, A_{\pi^0_\mathbb{R}} \rbrace$ and $\lbrace \sigma^0_\mathbb{R}, A_{\sigma^0_\mathbb{R}} \rbrace$, respectively.

(ii) If $M^0 \subset G^0$, $M^0 \neq G^0$, for every $\gamma \in M^0(\mathbb{R})^1 \cap G^0_{reg}(\mathbb{R})$ and for every $X \in (\mathfrak{a}^{G^0}_{M^0})^{\theta}$, we have\medskip
\begin{center}
$\sum_{L^0} \sum_{\rho^0_\mathbb{R}} \sum_{A_{\rho^0_{\mathbb{R}}}} (-1)^{\dim(A_{L^0}/A_{G^0})}$ $^cD^{G^0}_{L^0}(\rho^0_\mathbb{R}, X_{L^0}, \sigma^0_\mathbb{R})\Phi^{L^0}_{M^0}(\lbrace \rho_\mathbb{R}^{0\vee}, A_{\rho_{\mathbb{R}}} \rbrace, \gamma e^{X^{L^0}})=0,$\medskip
\end{center}
where $L^0$ ranges over $\mathcal{L}^{\theta}_\mathbb{R}(M^0)$, $\rho^0_\mathbb{R}$ ranges over $\Pi_{\theta -2}(A_{L^0}(\mathbb{R})^0 \setminus L^0(\mathbb{R}))$, and\medskip
\begin{center}
$X = X^{L^0} + X_{L^0} \in (\mathfrak{a}^{L^0}_{M^0})^{\theta} \oplus (\mathfrak{a}^{G^0}_{L^0})^{\theta} = (\mathfrak{a}^{G^0}_{M^0})^{\theta}$,\medskip
\end{center}
\textit{and where $\Phi^{L^0}_{M^0}$ now stands for the twisted normalized character, as defined by Morel in} \cite[p.~125]{M}.
\end{cor}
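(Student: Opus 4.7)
The plan is to deduce this corollary from Proposition 4.3 by a systematic translation: every object on the non-connected side $G = G^0\rtimes\theta$ has a counterpart on the twisted side $G^0$, and this correspondence respects all the structures appearing in the statement.

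First, I would set up the dictionary. By the discussion following the definition of representations (citing \cite[p.~121]{M}), the data of $\pi\in\Pi(M(\mathbb{R}))$ are equivalent to the data of a $\theta$-stable $\pi^0\in\Pi(M^0(\mathbb{R}))$ together with a normalized intertwining operator $A_{\pi^0}:\pi^0\to\pi^0\circ\theta$. Under this equivalence, $\Pi_{\mathrm{temp}}(A_G(\mathbb{R})^0\backslash G(\mathbb{R}))$ corresponds to $\Pi_{\theta\text{-temp}}(A_{G^0}(\mathbb{R})^0\backslash G^0(\mathbb{R}))$ and $\Pi_2(\cdot)$ on the $G$-side corresponds to $\Pi_{\theta\text{-}2}(\cdot)$ on the $G^0$-side. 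Levi subsets $M$ of $G$ correspond bijectively to $\theta$-stable Levi subgroups $M^0\in\mathcal{L}^{\theta}_{\mathbb{R}}$, and $\mathfrak{a}^G_M$ corresponds to $(\mathfrak{a}^{G^0}_{M^0})^{\theta}$ as noted in subsection 4.1.1. The test function correspondence sends $\phi_{\mathbb{R}}$ on $G(\mathbb{R})$ to $f_{\mathbb{R}}$ on $G^0(\mathbb{R})$ by $\phi_{\mathbb{R}}(x\rtimes\theta)=f_{\mathbb{R}}(x)$, and with this choice
\[
\mathrm{tr}(\pi_{\mathbb{R}}(\phi_{\mathbb{R}})) \;=\; \mathrm{tr}(\pi^0_{\mathbb{R}}(f_{\mathbb{R}})\,A_{\pi^0_{\mathbb{R}}}),
\]
which is the identity that turns the ordinary character $\phi_{\mathbb{R},G}$ of Proposition 4.3 into the twisted character $f_{\mathbb{R},G}$ of the corollary.

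Second, I would verify that the distributions $^{c}D^{G^0}_{M^0}$ in the corollary are literally the same objects as $^{c}D^G_M$ in Proposition 4.3. Arthur's residue construction in \cite{A89I} is carried out in the general (not necessarily connected) setting, so the distributions defined on pairs $\{\pi^0_{\mathbb{R}},A_{\pi^0_{\mathbb{R}}}\}$ coincide with those defined on the corresponding $\pi_{\mathbb{R}}$, once one substitutes twisted characters for characters wherever they appear; this is essentially the content of Morel's analysis in \cite[p.~128]{M}. Similarly, the normalized character $\Phi^L_M$ in the statement of Proposition 4.3(ii) is replaced by the twisted normalized character $\Phi^{L^0}_{M^0}$ defined by Morel in \cite[p.~125]{M}.

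Third, with the dictionary in place, the three conclusions of the corollary are formal consequences of the corresponding conclusions of Proposition 4.3. The support condition (non-vanishing forces $\pi^0_{\mathbb{R}}\in\Pi_2$) transfers because $\Pi_{\theta\text{-}2}$ corresponds to $\Pi_2$ on the $G$-side. The expansion formula transfers because the sum over $\sigma_{\mathbb{R}}\in\Pi_2(A_G(\mathbb{R})^0\backslash G(\mathbb{R}))$ in Proposition 4.3 becomes, after the dictionary, a sum over pairs $\{\sigma^0_{\mathbb{R}},A_{\sigma^0_{\mathbb{R}}}\}$ parametrizing the same objects, which is precisely the double sum $\sum_{\sigma^0_{\mathbb{R}}}\sum_{A_{\sigma^0_{\mathbb{R}}}}$ appearing in the corollary. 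Properties (i) and (ii) transfer similarly, with the sum over Levi subsets of $G$ becoming the sum over $\theta$-stable Levi subgroups $L^0$ of $G^0$, and with the extra sum over $A_{\rho^0_{\mathbb{R}}}$ accounting for the fact that each representation of $L$ corresponds to a pair on the $L^0$-side.

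The one step that requires genuine care—and which I expect to be the main obstacle—is verifying that the character identity used to pass from the non-connected character $\Phi^L_M(\rho_{\mathbb{R}}^{\vee},\gamma e^{X^L})$ to the twisted character $\Phi^{L^0}_{M^0}(\{\rho_{\mathbb{R}}^{0\vee},A_{\rho_{\mathbb{R}}}\},\gamma e^{X^{L^0}})$ in identity (ii) is legitimate pointwise at the regular elements $\gamma\in M^0(\mathbb{R})^1\cap G^0_{\mathrm{reg}}(\mathbb{R})$. As noted in the introduction to this section, the Harish-Chandra character formula is not available in the non-connected real setting, so one cannot simply appeal to Laumon's argument verbatim. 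Here one invokes Lemma 4.8 (the character identity relating twisted characters on $G^0$ to stable characters on $G$), which is exactly what allows the recursion relation (ii) of Proposition 4.3 to be re-expressed in twisted form. Once this identification is in place at the level of normalized characters, the recursion relation for the $^{c}D^{G^0}_{M^0}$ follows directly from that for the $^{c}D^G_M$, completing the proof.
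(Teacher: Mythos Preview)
Your approach is correct and matches the paper's: the paper's entire proof is the single sentence ``This is a restatement of proposition 4.3,'' and the dictionary you set up in your first three paragraphs is exactly what that sentence means. The bijection between representations $\pi_{\mathbb{R}}$ of $M$ and pairs $\{\pi^0_{\mathbb{R}},A_{\pi^0_{\mathbb{R}}}\}$ on $M^0$, together with the identification $\mathfrak{a}^G_M\simeq(\mathfrak{a}^{G^0}_{M^0})^{\theta}$ and the trace identity $\mathrm{tr}(\pi_{\mathbb{R}}(\phi_{\mathbb{R}}))=\mathrm{tr}(\pi^0_{\mathbb{R}}(f_{\mathbb{R}})A_{\pi^0_{\mathbb{R}}})$, is all that is needed.

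Your final paragraph, however, introduces a complication that does not belong here. The passage from $\Phi^L_M(\rho_{\mathbb{R}}^{\vee},\gamma e^{X^L})$ to $\Phi^{L^0}_{M^0}(\{\rho_{\mathbb{R}}^{0\vee},A_{\rho_{\mathbb{R}}}\},\gamma e^{X^{L^0}})$ is not a character identity requiring proof: by definition, the character of a representation $\rho_{\mathbb{R}}$ of $L=L^0\rtimes\theta$ at an element $x\rtimes\theta$ \emph{is} the twisted character of $\rho^0_{\mathbb{R}}$ (with its intertwining operator) at $x$. This is tautological, and Morel's definition on \cite[p.~125]{M} is simply the standard normalization of that twisted character. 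The issue you raise about the Harish-Chandra character formula in the non-connected case is real, but it arises later, in the proof of Lemma~4.10 (the finite Fourier expansion of the $^cD$'s), not in this corollary. The character identity the paper alludes to relates twisted characters on $G^0$ to \emph{stable} characters on the unitary group $\mathbf{G}$ (see Propositions~5.3--5.4), and it is used to \emph{compute} the recursion, not to \emph{state} it. For the present corollary no such input is needed.
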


\begin{proof} This is a restatement of proposition 4.3.
\end{proof}

Note that the summation over $A_{\sigma^0_{\mathbb{R}}}$ ($A_{\rho^0_{\mathbb{R}}}$) amounts to the following. For each $\sigma^0_{\mathbb{R}}$ ($\rho^0_{\mathbb{R}}$), we choose a normalized intertwining operator $A_{\sigma^0_{\mathbb{R}}}$ ($A_{\rho^0_{\mathbb{R}}}$), and then the sum is taken over $A_{\sigma^0_{\mathbb{R}}}$ and $-A_{\sigma^0_{\mathbb{R}}}$ ($A_{\rho^0_{\mathbb{R}}}$ and $-A_{\rho^0_{\mathbb{R}}}$).

The previous results only work for tempered representations. We extend them to general unitary representations by using the following proposition of Arthur's, which is an extension of a well-known result of Vogan.

Let $\Sigma(G(F_S))$ denote the set of equivalence classes of representations of $\tilde{G}(F_S)$ that are equal to $\sigma^G_{\Lambda}$ for some $\prod_{v \in S}M_v$, with $\sigma$ a representation in\medskip
\begin{center}
$\lbrace \otimes_v \sigma_v : \sigma_v \in \Pi_{temp}(M_v(F_v)) \rbrace$\medskip
\end{center} 
and $\Lambda$ a point in $\oplus_v \mathfrak{a}^\ast_{M_v}$ that is regular in the sense that $\Lambda(\beta) \neq 0$ for every root $\beta$ of $(G, \prod_v A_{M_v})$. The elements of $\Sigma(G(F_S))$ are called \emph{standard representations}. For $\rho \in \Sigma(G(F_S))$, we have $\rho^0 \in \Sigma(G^0(F_S))$. It is well-known that $\rho^0$ has a unique irreducible quotient, and hence, $\rho$ also has a unique irreducible quotient, which we will denote $\bar{\rho}$. It is a representation in $\Pi(G(F_S))$, and $\rho \mapsto \bar{\rho}$ is a bijection from $\Sigma(G(F_S))$ onto $\Pi(G(F_S))$; see \cite[p.~40]{A89I} Thus, standard representations are fit to play the same role they play for connected groups.

\begin{Proposition}
Let $\lbrace \Pi(G(F_S))\rbrace$ and $\lbrace\Sigma(G(F_S))$ denote the set of $\Xi_{G,S}$-orbits in $\Pi(G(F_S))$ and $\Sigma(G(F_S))$, respectively. Then there are uniquely determined complex numbers\medskip
\begin{center}
$\lbrace \Delta(\pi, \rho) : \pi \in \Pi(G(F_S)), \rho \in \Sigma(G(F_S))\rbrace,$\medskip
\end{center}
such that\medskip
\begin{center}
$\mathrm{tr}(\pi) = \sum_{\rho \in \lbrace \Sigma(G(F_S))\rbrace} \Delta(\pi, \rho) \mathrm{tr}(\rho),$\medskip
\end{center}
where $\pi \in \Pi(G(F_S))$ and $\rho \in \Sigma(G(F_S))$.
\end{Proposition}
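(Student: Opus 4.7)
The plan is to reduce to the classical Langlands--Vogan character expansion for connected reductive groups and to lift the result to the non-connected setting using the already-established bijection $\rho \mapsto \bar{\rho}$ between $\Sigma(G(F_S))$ and $\Pi(G(F_S))$, together with the free action of $\Xi_{G,S}$ on both sides.

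First, I would recall the connected case. For each $\pi^0 \in \Pi(G^0(F_S))$, the Langlands classification and Vogan's theorem supply uniquely determined complex numbers $\Delta^0(\pi^0, \rho^0)$ such that $\mathrm{tr}(\pi^0) = \sum_{\rho^0 \in \Sigma(G^0(F_S))} \Delta^0(\pi^0, \rho^0)\, \mathrm{tr}(\rho^0)$. The key structural input is that, with an appropriate partial ordering of $\Sigma(G^0(F_S))$ by infinitesimal character and leading-exponent data, the matrix expressing standard characters in terms of irreducible characters is unipotent upper triangular, hence invertible; compare the discussion in \cite[Sect.~1]{A89I}.

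Second, I would transfer this to the non-connected setting. Given $\pi \in \Pi(G(F_S))$, its restriction $\pi^0$ is irreducible and invariant under $\tilde{G}(F_S)/G^0(F_S)$; the analogous statement holds for $\rho \in \Sigma(G(F_S))$. Starting from the connected expansion of $\mathrm{tr}(\pi^0)$, the contributions that survive when passing to the twisted trace $\mathrm{tr}(\pi)$ come precisely from those standard representations $\rho^0$ that themselves extend to $\tilde{G}(F_S)$, i.e., those arising as restrictions of $\rho \in \Sigma(G(F_S))$. Once one fixes normalized extensions, these contributions organize into a finite linear combination indexed by $\Xi_{G,S}$-orbits on $\Sigma(G(F_S))$, yielding the desired expansion. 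The ambiguity in the choice of extension corresponds exactly to the free $\Xi_{G,S}$-action on $\Pi(G(F_S))$ and $\Sigma(G(F_S))$, so the sum descends unambiguously to orbits.

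The main obstacle is establishing the uniqueness of the coefficients $\Delta(\pi, \rho)$ at the orbit level. This reduces to the linear independence of the family of twisted standard characters $\lbrace \mathrm{tr}(\rho) : \rho \in \lbrace\Sigma(G(F_S))\rbrace \rbrace$, which I would prove by the same triangularity argument as in the connected case: any nontrivial linear relation would contain a standard representation $\rho$ that is minimal in the Langlands ordering, whose unique irreducible quotient $\bar{\rho}$ must then appear with a nonzero coefficient, contradicting the injectivity of $\rho \mapsto \bar{\rho}$ from $\Sigma(G(F_S))$ onto $\Pi(G(F_S))$. The adaptability of this argument to the non-connected case is precisely what the bijection $\rho \mapsto \bar{\rho}$ recorded just before the statement, following \cite[p.~40]{A89I}, is designed to secure.
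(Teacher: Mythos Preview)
Your sketch is correct and aligns with what the paper does: the paper's entire proof is the citation ``See Arthur, \cite[p.~41]{A89I},'' and your outline is essentially a summary of Arthur's argument there---reduction to the connected Langlands--Vogan expansion, passage to the non-connected setting via the bijection $\rho\mapsto\bar\rho$, and uniqueness from the triangularity of the standard-to-irreducible transition matrix. There is nothing to add; you have simply unpacked the cited reference.
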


\begin{proof} See Arthur, \cite[p.~41]{A89I}.
\end{proof}

\begin{Proposition}
Let $M \in \mathcal{L}_{\mathbb{R}}$ and $\phi_{\mathbb{R}} \in \mathcal{H}(A_G(\mathbb{R})^0 \setminus G(\mathbb{R}))$. Suppose that $\phi_{\mathbb{R}}$ is very cuspidal and that the support of $\phi_{\mathbb{R}, G}$ is contained in $\Pi_2(A_G(\mathbb{R})^0 \setminus G(\mathbb{R}))$. Choose a family of constants $C_L \in \mathbb{R}_+ (L \in \mathcal{L}_{\mathbb{R}}(M), L \neq G)$ as in proposition $4.2.$ Then, for each $\pi_{\mathbb{R}} \in \Pi_{unit}(A_M(\mathbb{R})^0 \setminus M(\mathbb{R}))$, and each $X \in \mathfrak{a}^G_M$ such that $\Vert X_L\Vert > C_L$ for all $L \in \mathcal{L}_\mathbb{R}(M)$, $L \neq G$, we have\medskip
\begin{center}
$J^G_M(\pi_\mathbb{R}, \phi_\mathbb{R}, X) =$ $^{c}D^G_M(\pi_\mathbb{R}, X, \phi_\mathbb{R}),$\medskip
\end{center}
where\medskip
\begin{center}
$^{c}D^G_M(\pi_\mathbb{R}, X, \phi_\mathbb{R}) := \sum_{\pi'_{\mathbb{R}}} \Delta(\pi_{\mathbb{R}}, \pi'_{\mathbb{R}})^{c}D^G_M(\pi'_\mathbb{R}, X, \phi_\mathbb{R}),$\medskip
\end{center}
\textit{where $\pi'_{\mathbb{R}}$ ranges over $\Pi_2(A_M(\mathbb{R})^0 \setminus M(\mathbb{R}))$.}
\end{Proposition}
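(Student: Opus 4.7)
The strategy is to reduce the statement for unitary $\pi_\mathbb{R}$ to the tempered case handled by Proposition 4.2, using the Vogan--Arthur expansion of Proposition 4.5, and then invoke Proposition 4.3 (whose hypothesis matches the one we have on $\phi_{\mathbb{R}, G}$) to cut the resulting sum down to $\Pi_2(A_M(\mathbb{R})^0 \setminus M(\mathbb{R}))$. The key observation enabling the reduction is that both $\pi_\mathbb{R} \mapsto J^G_M(\pi_\mathbb{R}, \phi_\mathbb{R}, X)$ and $\pi_\mathbb{R} \mapsto {}^cD^G_M(\pi_\mathbb{R}, X, \phi_\mathbb{R})$ factor through the distribution character of $\pi_\mathbb{R}$: the weighted character by its very construction as the trace of $\mathcal{R}_M(\pi_\lambda, P_0) \circ \mathcal{I}_{P_0}(\pi_\lambda, f)$, and the right-hand side by the formula adopted in the statement.

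Given this character-theoretic dependence, applying Proposition 4.5 to $\pi_\mathbb{R}$ yields
\[
J^G_M(\pi_\mathbb{R}, \phi_\mathbb{R}, X) = \sum_{\pi'_\mathbb{R} \in \lbrace \Sigma(M(\mathbb{R})) \rbrace} \Delta(\pi_\mathbb{R}, \pi'_\mathbb{R}) \, J^G_M(\pi'_\mathbb{R}, \phi_\mathbb{R}, X),
\]
with $\pi'_\mathbb{R}$ ranging over orbit representatives of standard representations. Each such $\pi'_\mathbb{R}$ is either tempered or of the form $\sigma^M_\Lambda$ with $\sigma$ tempered on a proper Levi $L \subsetneq M$ and $\Lambda$ regular. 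In the former case Proposition 4.2 applies directly; in the latter case I would first use a descent identity rewriting $J^G_M(\pi'_\mathbb{R}, \phi_\mathbb{R}, X)$ in terms of weighted characters $J^G_L(\sigma_\bullet, \phi_\mathbb{R}, \bullet)$ of the tempered inducing datum, and then apply Proposition 4.2 on the tempered side. Either way, in the range $\Vert X_L \Vert > C_L$ one obtains
\[
J^G_M(\pi'_\mathbb{R}, \phi_\mathbb{R}, X) = {}^cD^G_M(\pi'_\mathbb{R}, X, \phi_\mathbb{R}),
\]
after which Proposition 4.3 forces the right-hand side to vanish unless $\pi'_\mathbb{R} \in \Pi_2(A_M(\mathbb{R})^0 \setminus M(\mathbb{R}))$. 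The surviving terms in the Vogan sum are then precisely the expression by which ${}^cD^G_M(\pi_\mathbb{R}, X, \phi_\mathbb{R})$ is defined in the statement, concluding the proof.

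The main obstacle I anticipate is the descent step for non-tempered standards: one must verify that the descent identities for $J^G_M$ and for ${}^cD^G_M$ remain compatible term-by-term after passage to the non-connected setting, so that the equality $J^G_M(\pi'_\mathbb{R}, \cdot) = {}^cD^G_M(\pi'_\mathbb{R}, \cdot)$ persists at the standard level rather than only at the tempered level. The descent for $J^G_M$ is standard in Arthur's work, and Morel's supplementation (as invoked earlier for Corollary 4.4) ensures that the analogous descent for ${}^cD^G_M$ via residues of Eisenstein series is available in the non-connected setting; modulo this bookkeeping, the argument should go through exactly as Laumon's in the connected case.
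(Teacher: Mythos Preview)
Your reconstruction is essentially the right approach and matches what Laumon does (the paper simply cites Laumon, \cite[p.~310]{L}): expand in standard representations via Proposition~4.5, reduce each standard to its tempered inducing datum, apply Proposition~4.2 on the tempered side, then use Proposition~4.3 to kill everything not in $\Pi_2$.

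One point deserves tightening. The claim that $\pi_\mathbb{R} \mapsto J^G_M(\pi_\mathbb{R}, \phi_\mathbb{R}, X)$ ``factors through the distribution character'' is not literally true and is not the right justification: the weighted character involves the normalized intertwining operators $\mathcal{R}_M(\pi_\lambda, P_0)$, which depend on $\pi$ as a representation and not merely on its trace. What you actually need---and what Arthur proves in \cite{A89I}---is the specific compatibility of $J^G_M$ with the standard-representation expansion and with parabolic descent (so that $J^G_M(\sigma^M_\Lambda, \cdot)$ reduces to $J^G_L(\sigma, \cdot)$ for tempered $\sigma$). Once you cite those results rather than the looser character-dependence heuristic, the argument goes through exactly as you outline, and your caveat about the non-connected bookkeeping is the right one.
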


\begin{proof} Laumon, \cite[p.~310]{L}.
\end{proof}

\begin{Proposition}
\textit{For each family of constants $C_L \in \mathbb{R}_+ (L \in \mathcal{L}_{\mathbb{R}}(M), L \neq G)$, there exits a constant $C \in \mathbb{R}_+$ with the following property. For each $\phi' \in C^{\infty}_c(G(\mathbb{A}_f))$ that is $C$-regular, each $M \ \in \mathcal{L}$, $M \neq G$, each $\pi_f \in \Pi(M(\mathbb{A}_f))$, and each $X \in \mathfrak{a}_{M,f}$ such that}
\begin{center}
$\Vert s(X)_L\Vert \leqslant C_L$
\end{center}
\textit{for at least one $L \in \mathcal{L}_{\mathbb{R}}(M)$, $L \neq G$, we have}
\begin{center}
$\phi'_M(\pi_f, X) = 0$.
\end{center}
\end{Proposition}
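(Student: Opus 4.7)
The proposition is the $p$-adic counterpart of the Archimedean Proposition 4.2, and the plan is to mirror Laumon's argument in the connected case \cite[p.~311]{L}, adapted to the non-connected setting of Section 4.1. The starting point is the definition
\[
\phi'_M(\pi_f, X) = \int_{i\mathfrak{a}^*_{M,f}} \mathrm{tr}(\pi^G_{f,\lambda}(\phi'))\, e^{-\lambda(X)}\, d\lambda,
\]
together with the Frobenius identity $\mathrm{tr}(\pi^G_{f,\lambda}(\phi')) = \mathrm{tr}(\pi_{f,\lambda}(\phi'_P))$ for any $P \in \mathcal{P}(M)$. Expanding the trace as an integral over $M(\mathbb{A}_f)$ and carrying out Fourier inversion in the $\mathfrak{a}_M$-direction, one presents $\phi'_M(\pi_f, X)$ as a finite sum (indexed by $m \in M(\mathbb{A}_f)$ whose $H_{M,f}$-image equals $X$, modulo $K_{max,f}$-conjugacy) of values of $\phi'_P(m)\,\chi_{\pi_f}(m)$.

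Next, for each intermediate Levi subset $L \in \mathcal{L}_{\mathbb{R}}(M)$ with $M \subseteq L \subsetneq G$, I invoke the transitivity of parabolic descent: choosing $Q \in \mathcal{P}(L)$ with $P \subset Q$, one has $\phi'_P = (\phi'_Q)_{P \cap L}$, so the vanishing of $\phi'_Q$ on a region of $L(\mathbb{A}_f)$ implies the vanishing of $\phi'_P$ on its preimage. Applying the $C$-regularity of $\phi'$ at $L$ then gives $\phi'_Q(m_L) = 0$ unless $|\beta(H_{L,f}(m_L))| > C$ for some root $\beta$ of $A_L$ in $G$. Since $H_{L,f}(m)$ is the projection to $\mathfrak{a}_L$ of $H_{M,f}(m) = X$, this becomes a size condition on the component $s(X)_L \in \mathfrak{a}_L^G$.

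The geometric heart of the proof is then to choose $C$ correctly in terms of the given family $\{C_L\}$. One must verify that, if $\|s(X)_L\| \leq C_L$ for some $L \in \mathcal{L}_{\mathbb{R}}(M)$ with $L \neq G$, then the resulting constraint propagated through the parabolic descent forces $\phi'_P(m) = 0$ for every $m$ with $H_{M,f}(m) = X$, and hence $\phi'_M(\pi_f, X) = 0$. Because the Levi subsets $L$ containing $M$ are finite in number and the roots of $A_L$ in $G$ form a finite root system in $(\mathfrak{a}_L^G)^*$, the required $C$ exists and can be taken to be an explicit function of the $C_L$ and of the norms of the roots, essentially of the form $C = \max_L C_L \cdot \max_\beta \|\beta\|$.

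The principal obstacle is the careful book-keeping of parabolic descent through the nested Levi subsets, and the matching of the Arthur-style root inequalities $|\beta(\cdot)| > C$ with the Laumon-style projection norms $\|s(X)_L\| > C_L$. In our non-connected context, Levi subsets and $\theta$-stable parabolic subsets replace the connected Levi and parabolic subgroups of Laumon's argument, and the Arthur space $\mathfrak{a}_M$ is identified with $\mathfrak{a}_{M^0}^\theta$ as explained in Section 4.1; this identification respects all the relevant decompositions, so that the finite combinatorics of the root system and of $\mathcal{L}_{\mathbb{R}}(M)$ go through unchanged, and Laumon's argument transfers verbatim.
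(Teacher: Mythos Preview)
Your proposal is correct and follows the same route as the paper, which simply cites Laumon \cite[p.~311]{L} without further elaboration. Your sketch accurately reconstructs the argument: Fourier inversion reduces the non-vanishing of $\phi'_M(\pi_f,X)$ to the existence of $m$ in the support of $\phi'_P$ with $H_{M,f}(m)$ projecting to $s(X)$; transitivity of the constant term $\phi'_P = (\phi'_Q)_{P\cap L}$ propagates this to each intermediate $L$; and $C$-regularity applied at $L$ yields the root inequality bounding $\|s(X)_L\|$ from below. Your observation that in the non-connected setting one works with Levi subsets and with $\mathfrak{a}_M \simeq \mathfrak{a}_{M^0}^\theta$, and that the finite combinatorics carry over unchanged, is exactly the point the paper relies on when invoking Laumon's connected-case proof verbatim.
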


\begin{proof} Laumon, \cite[p.~311]{L}.
\end{proof}

So far, we have obtained the following expression for the spectral side of the trace formula.

\begin{theorem} 
Let $\phi_\mathbb{R}$ be a function in $\mathcal{H}(A_{G}(\mathbb{R})^{0} \setminus G(\mathbb{R}))$ that is highly cuspidal and stable cuspidal. Then there exists $C \in \mathbb{R}_+$ that only depends on the support of $\phi_\mathbb{R}$ and has the following property. For every function $\phi' \in C^{\infty}_{c}(G(\mathbb{A}_f))$ that is $C$-regular, we have\medskip

\begin{center}
$J^{G}_{spec}(\phi'\phi_{R}) = \sum_{t \geqslant 0} \sum_{M \in \mathcal{L}} \vert W^{M}_{0} \vert \vert W^{G}_{0} \vert^{-1} \sum_{\pi} a^{M}_{disc}(\pi) \sum_{\pi'} \Delta(\pi_{\mathbb{R}}, \pi_{\mathbb{R}}')$\medskip

$\times \sum_{X \in a_{M,f}} D^{G}_{M}(\pi_{\mathbb{R}}', s(X), \phi_{\mathbb{R}}) \phi_{M}'(\pi_{f}, X)$\medskip

\end{center}
where $\pi$ ranges over $\Pi_{disc}(M,t)$ and $\pi'$ ranges over $\Pi_{2}(A_{M}(\mathbb{R})^{0} \setminus M(\mathbb{R}))$, and the quantities $^cD^{G}_{M}$ are computed as in proposition 4.3.
\end{theorem}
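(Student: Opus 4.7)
The plan is to assemble the claimed formula by substituting Proposition 4.5 into Theorem 4.1 term by term, then using Proposition 4.6 to discard the contributions from the range of $X$ where Proposition 4.5 does not apply. All of the analytic content is already packaged in those two propositions, so the work is largely one of careful bookkeeping.

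First I would invoke Theorem 4.1, whose hypothesis on the archimedean factor is very cuspidality (subsumed by ``highly cuspidal''). This produces
\[
J^G_{spec}(\phi_\mathbb{R} \phi') = \sum_{t \geqslant 0} \sum_{M \in \mathcal{L}} |W^M_0|\,|W^G_0|^{-1} \sum_{\pi \in \Pi_{disc}(M,t)} a^M_{disc}(\pi) \sum_{X \in \mathfrak{a}_{M,f}} J^G_M(\pi_\mathbb{R}, \phi_\mathbb{R}, s(X)) \, \phi'_M(\pi_f, X).
\]
The next task is to rewrite the inner $X$-sum at each Levi subset $M$. Because $\phi_\mathbb{R}$ is stable cuspidal, the function $\phi_{\mathbb{R}, G}$ is supported on $\Pi_2(A_G(\mathbb{R})^0 \setminus G(\mathbb{R}))$, so the hypotheses of Proposition 4.5 are met. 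For each $M \in \mathcal{L}_\mathbb{R}$ it delivers a family of constants $C_L \in \mathbb{R}_+$, $L \in \mathcal{L}_\mathbb{R}(M)$, $L \neq G$, depending only on the support of $\phi_\mathbb{R}$, such that whenever $\|s(X)_L\| > C_L$ for every such $L$ one has
\[
J^G_M(\pi_\mathbb{R}, \phi_\mathbb{R}, s(X)) = \sum_{\pi'_\mathbb{R}} \Delta(\pi_\mathbb{R}, \pi'_\mathbb{R}) \, {}^{c}D^G_M(\pi'_\mathbb{R}, s(X), \phi_\mathbb{R}),
\]
with $\pi'_\mathbb{R}$ running over $\Pi_2(A_M(\mathbb{R})^0 \setminus M(\mathbb{R}))$.

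Second, I would feed this finite collection $\{C_L\}$ (finite up to conjugacy) into Proposition 4.6, which produces a single constant $C \in \mathbb{R}_+$, again depending only on the support of $\phi_\mathbb{R}$, such that for any $C$-regular $\phi'$, every $M \neq G$, every $\pi_f$, and every $X \in \mathfrak{a}_{M,f}$ with $\|s(X)_L\| \leqslant C_L$ for at least one $L \neq G$, the coefficient $\phi'_M(\pi_f, X)$ vanishes. This kills exactly the range of $X$ where the Proposition 4.5 substitution was not available, so after the substitution only the ``good'' $X$ survive. The case $M = G$ is automatic: $\mathcal{L}_\mathbb{R}(G) = \{G\}$ makes the condition on $X$ in Proposition 4.5 vacuous, and the substitution holds for every $X \in \mathfrak{a}_{G,f}$. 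Plugging the expansion back into the formula from Theorem 4.1 and reorganizing the now-finite sums yields the claimed identity.

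The expected main obstacle is essentially organizational rather than technical: one must coordinate the families $\{C_L\}$ across the finitely many conjugacy classes of Levi subsets $M \in \mathcal{L}$ so that a single $C$ works uniformly, and handle $M = G$ in parallel because Proposition 4.6 explicitly excludes it. The only substantive check, that stable cuspidality of $\phi_\mathbb{R}$ forces the support of $\phi_{\mathbb{R}, G}$ into $\Pi_2(A_G(\mathbb{R})^0 \setminus G(\mathbb{R}))$, is immediate from the definition recalled in Section 3.1.
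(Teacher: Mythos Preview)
Your approach is correct and matches the paper's intended argument: the theorem is stated without a separate proof precisely because it is the evident combination of Theorem 4.1 with the two propositions immediately preceding it. Note only that your numbering is off by one relative to the paper---what you call ``Proposition 4.5'' is the paper's Proposition 4.6, and your ``Proposition 4.6'' is the paper's Proposition 4.7 (the paper's Proposition 4.5 is the existence of the coefficients $\Delta(\pi,\rho)$).
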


Again, we can write down a version of the formula in the theorem that is indexed explicitly in terms of representations of $G^0$ and normalized intertwining operators.

\begin{cor}
Let $\phi_\mathbb{R}$ be a function in $\mathcal{H}(A_{G^0}(\mathbb{R})^{0} \setminus G^0(\mathbb{R}))$ that is highly cuspidal and stable cuspidal. Then there exists $C \in \mathbb{R}_+$ that only depends on the support of $\phi_\mathbb{R}$ and has the following property. For every function $\phi' \in C^{\infty}_{c}(G^0(\mathbb{A}_f))$ that is $C$-regular, we have\medskip

\begin{center}
$J^{G}_{spec}(\phi'\phi_{R}) = \sum_{t \geqslant 0} \sum_{M^0 \in \mathcal{L}^0} \vert W^{M}_{0} \vert \vert W^{G}_{0} \vert^{-1}$\medskip

$\times \sum_{\pi^0} a^{M^0}_{disc}(\pi^0) \sum_{A_{\pi^0_{\mathbb{R}}}}  \sum_{\pi'^0_{\mathbb{R}}} \sum_{A_{\pi'^0_{\mathbb{R}}}} \Delta(\lbrace \pi^0_{\mathbb{R}},A_{\pi^0_\mathbb{R}}\rbrace, \lbrace \pi'^0_{\mathbb{R}},A_{\pi'^0_\mathbb{R}}\rbrace ) $\medskip

$\times \sum_{X \in a_{M,f}}$ $^cD^{G^0}_{M^0}(\pi_{\mathbb{R}}', s(X), \phi_{\mathbb{R}}) \phi_{M^0}'(\pi_{f}, X)$\medskip

\end{center}
where $\pi^0$ ranges over $\Pi_{disc}(M^0,t)$ and $\pi'^0$ ranges over $\Pi_{2}(A_{M^0}(\mathbb{R})^{0} \setminus M^0(\mathbb{R}))$, and where $\Delta(\lbrace \pi^0_{\mathbb{R}},A_{\pi^0_\mathbb{R}}\rbrace, \lbrace \pi'^0_{\mathbb{R}},A_{\pi'^0_\mathbb{R}}\rbrace ):=\Delta(\pi_{\mathbb{R}},\pi'_{\mathbb{R}})$ for $\pi_{\mathbb{R}}$ and $\pi'_{\mathbb{R}}$ the representations of $M$ corresponding to the data $\lbrace\pi^0_{\mathbb{R}}, A_{\pi^0_{\mathbb{R}}} \rbrace$ and $\lbrace\pi'^0_{\mathbb{R}}, A_{\pi'^0_{\mathbb{R}}} \rbrace$, respectively, and where the quantities $^cD^{G^0}_{M^0}$ are computed as in corollary 4.4.
\end{cor}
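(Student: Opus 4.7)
The plan is to view this corollary as a pure translation of Theorem 4.7 into the language of $\theta$-stable representations of the connected group $G^0$ together with normalized intertwining operators, exactly parallel to how Corollary 4.4 was obtained from Proposition 4.3. The dictionary is the one developed in Subsection 4.1.2: a Levi subset $M$ of $G$ corresponds to a $\theta$-stable Levi subgroup $M^0$ of $G^0$, and an irreducible representation $\pi$ of $M(\mathbb{A})$ is the same datum as a pair $\{\pi^0, A_{\pi^0}\}$ consisting of a $\theta$-stable irreducible representation $\pi^0$ of $M^0(\mathbb{A})$ together with a normalized intertwining operator.

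First, I would start from the formula of Theorem 4.7 and rewrite each outer ingredient: the sums over $t$ and $M \in \mathcal{L}$ become sums over $t$ and $M^0 \in \mathcal{L}^0$, while the Weyl factors $|W^M_0||W^G_0|^{-1}$ and the constant $C$ are carried over unchanged. Second, I would translate the inner sums. The sum over $\pi \in \Pi_{disc}(M,t)$ becomes a sum over $\theta$-stable $\pi^0 \in \Pi_{disc}(M^0,t)$ together with a sum over normalized intertwining operators $A_{\pi^0_{\mathbb{R}}}$; since by Schur's lemma only the sign of $A$ is free, this operator sum is effectively over $\pm A_{\pi^0_{\mathbb{R}}}$ and reproduces exactly the two extensions of $\pi^0$ to a representation of $M(\mathbb{A})$. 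The same rewriting applies to $\pi' \in \Pi_2(A_M(\mathbb{R})^0 \setminus M(\mathbb{R}))$, and by definition $\Delta(\{\pi^0_{\mathbb{R}},A_{\pi^0_{\mathbb{R}}}\},\{\pi'^0_{\mathbb{R}},A_{\pi'^0_{\mathbb{R}}}\})$ is simply the relabeling of $\Delta(\pi_{\mathbb{R}},\pi'_{\mathbb{R}})$.

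Third, I would apply Corollary 4.4 to replace the archimedean distribution $^{c}D^G_M(\pi'_{\mathbb{R}}, s(X), \phi_{\mathbb{R}})$ by the twisted object $^{c}D^{G^0}_{M^0}(\pi'^0_{\mathbb{R}}, s(X), \phi_{\mathbb{R}})$ computed as in that corollary. The non-archimedean factor $\phi'_M(\pi_f, X)$ becomes $\phi'_{M^0}(\pi_f, X)$ because by our convention from Subsection 2.2.2 the test function on $G(\mathbb{A}_f) = G^0(\mathbb{A}_f) \rtimes \theta$ is obtained from a function on $G^0(\mathbb{A}_f)$ via $\phi(x \rtimes \theta) = \phi^0(x)$, and parabolic descent commutes with this identification.

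The only point requiring genuine care is the combinatorics of intertwining operators: one must verify that allowing both signs $\pm A_{\pi^0_{\mathbb{R}}}$ and $\pm A_{\pi'^0_{\mathbb{R}}}$ reconstructs the single sums over $\pi$ and $\pi'$ of Theorem 4.7 without double-counting. This is handled by the bijection between fixed-point-free $\Xi_{M,\mathbb{A}}$-orbits in $\Pi(M(\mathbb{A}))$ and $\tilde{M}(\mathbb{A})/M^0(\mathbb{A})$-invariant elements of $\Pi(M^0(\mathbb{A}))$ recalled in Subsection 4.1.2; apart from this bookkeeping, the argument is a mechanical substitution.
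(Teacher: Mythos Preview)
Your proposal is correct and matches the paper's approach exactly: the paper's entire proof is the single sentence ``This is a restatement of theorem 4.7,'' and what you have written is precisely the dictionary-based unpacking of that restatement, using the correspondence of Subsection 4.1.2 between representations of $M$ and pairs $\{\pi^0, A_{\pi^0}\}$ together with Corollary 4.4 for the archimedean distribution. Your care about the $\pm A$ bookkeeping is appropriate and consistent with the paper's remark following Corollary 4.4.
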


\begin{proof} This is a restatement of theorem 4.7
\end{proof}

The final step in manipulating the spectral side is to write the distributions $^cD^G_M$ as finite Fourier expansions. This will render the spectral terms explicit enough to permit us to combine the parabolic terms for $G$ with the parabolic terms for $H$. The following lemma requires the Harish-Chandra character formula for $G$. In our case, however, we are only using it in a situation where the (twisted) character on $G^0$ is equal to the (ordinary) stable character of the norm on $\mathbf{G}$.

\begin{lemma}
For $P \in P(M)$, $\pi_\mathbb{R} \in \Pi_2(A_M(\mathbb{R})^0 \setminus M(\mathbb{R}))$ , $\sigma_\mathbb{R} \in \Pi_2(A_G(\mathbb{R})^0 \setminus G(\mathbb{R}))$, and $\Lambda \in (\mathfrak{a}^{G}_{M})^{\ast}_{\mathbb{C}}$, there exist constants $d^{G}_{P}(\pi_\mathbb{R}, \Lambda, \sigma_\mathbb{R})$ with the following properties:

(i) $d^{G}_{P}(\pi_\mathbb{R}, \Lambda, \sigma_\mathbb{R}) = 0$ for each $\Lambda$ such that the intersection of $\Lambda + \chi_{\pi^{\vee}_{\mathbb{R}}}$ with $(\chi_{\sigma_\mathbb{R}})_M$ is empty in $(\mathfrak{a}^G_M)^\ast_{\mathbb{C}} \oplus t^{anis}_M(\mathbb{C})^{\ast}$;

(ii) for each $X \in {\mathfrak{a}^G_P}^+$, we have\medskip

\begin{center}
$^{c}D^G_M(\pi_\mathbb{R}, X, \sigma_\mathbb{R}) = \sum_{\Lambda \in (\mathfrak{a}^G_M)^\ast_\mathbb{C}} d^G_P(\pi_\mathbb{R}, \Lambda, \sigma_\mathbb{R})e^{-\Lambda(X)};$\medskip
\end{center}
\textit{in particular,}
\begin{center}
$^{c}D^G_M(\pi_\mathbb{R}, X, \phi_\mathbb{R}) = \sum_{\Lambda \in (\mathfrak{a}^G_M)^\ast_\mathbb{C}} d^G_P(\pi_\mathbb{R}, \Lambda, \phi_\mathbb{R})e^{-\Lambda(X)},$\medskip
\end{center}
where we have set\medskip
\begin{center}
$d^G_P(\pi_\mathbb{R}, \Lambda, \phi_\mathbb{R}) = \sum_{\sigma_\mathbb{R}} d^G_P(\pi_\mathbb{R}, \Lambda, \sigma_\mathbb{R}) \mathrm{tr} (\sigma_\mathbb{R}(\phi_\mathbb{R}))$.\medskip
\end{center}
\end{lemma}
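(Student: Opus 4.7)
The plan is to follow Laumon's proof of the analogous statement for connected groups, \cite[Lemma 4.13]{L}, essentially verbatim, replacing the single ingredient that fails in our non-connected setting --- the Harish-Chandra character formula for real reductive $G(\mathbb{R})$ --- with a character identity that lets me perform the relevant exponential expansion on the connected group $\mathbf{G}(\mathbb{R})$ instead.

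First, I would recall the structure of the definition of $^{c}D^G_M(\pi_\mathbb{R}, X, \sigma_\mathbb{R})$ given in \cite[Sect.~8]{A89I}: it is an iterated residue, taken along a chain of singular hyperplanes in $(\mathfrak{a}^G_M)^\ast_{\mathbb{C}}$, of a normalized matrix coefficient of $\mathcal{R}_M(\pi_{\mathbb{R},\lambda}, P_0)$ paired against $\sigma_\mathbb{R}$, integrated over $i(\mathfrak{a}^G_M)^\ast$. In Laumon's proof, the Harish-Chandra formula enters as the assertion that $\Theta_{\sigma_\mathbb{R}}$, restricted to the regular set of a compact Cartan, is a finite sum of exponentials. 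Combined with the fact that the normalized intertwining operators have only finitely many singular hyperplanes in any compact region, this gives (ii) as a finite sum over $\Lambda$ in a set cut out by the infinitesimal characters of the representations involved, and (i) falls out as a byproduct of that infinitesimal-character bookkeeping.

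Second, to bypass the absence of a Harish-Chandra formula for $G$, I would use the character identity already exploited in Corollary 4.4 and discussed in the preamble to this section. For the $\sigma_\mathbb{R}$ relevant to our applications --- namely those corresponding to the data $\lbrace \sigma^0_\mathbb{R}, A_{\sigma^0_\mathbb{R}} \rbrace$ of a $\theta$-discrete representation of $G^0(\mathbb{R})$ base-changed from a discrete series $L$-packet of $\mathbf{G}(\mathbb{R})$ --- the twisted character $\Theta_{\sigma^0_\mathbb{R}, A_{\sigma^0_\mathbb{R}}}$ is, on $\theta$-regular semisimple elements, equal to the stable character of the associated packet on $\mathbf{G}(\mathbb{R})$; see \cite[p.~125]{M}. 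Since $\mathbf{G}(\mathbb{R})$ is connected, Harish-Chandra's formula supplies the required finite exponential expansion of that stable character on a compact Cartan, and the identity transports the expansion back to the twisted side with an explicit description of the exponents.

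With this substitution installed, the rest of Laumon's residue analysis transcribes directly to our setting, yielding coefficients $d^G_P(\pi_\mathbb{R}, \Lambda, \sigma_\mathbb{R})$ satisfying both (i) and (ii); the final assertion, for $d^G_P(\pi_\mathbb{R}, \Lambda, \phi_\mathbb{R})$, then follows by linearity from Corollary 4.4. The main obstacle is the compatibility check in the second step: I need to verify that the exponents $\Lambda$ produced on the connected, stable side correspond correctly, under the norm map and the chosen normalization of the intertwining operator $A_{\sigma^0_\mathbb{R}}$, to the exponents arising in the twisted expansion on $G^0(\mathbb{R})$. This amounts to a careful tracking of compact Cartan subalgebras and of the intertwining-operator normalizations implicit in Morel \cite[Chapters 4--5]{M}, and it is the only point at which genuine extra work beyond Laumon's argument is required.
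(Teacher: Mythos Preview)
Your proposal is correct and takes essentially the same approach as the paper. The paper's proof is a bare citation to Laumon \cite[p.~316--7]{L}, with the understanding---spelled out in the preamble to Section~4---that the single non-connected obstacle (the Harish-Chandra character formula) is circumvented by the twisted-to-stable character identity passing to $\mathbf{G}(\mathbb{R})$; this is exactly the workaround you describe, and your account simply makes explicit what the paper leaves implicit in its reference.
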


\begin{proof} See Laumon, \cite[p.~316-7]{L}
\end{proof}

\begin{cor}
For $P^0 \in P(M^0)$, $\pi_\mathbb{R} \in \Pi_2(A_{M^0}(\mathbb{R})^0 \setminus M^0(\mathbb{R}))$ , $\sigma_\mathbb{R} \in \Pi_2(A_{G^0}(\mathbb{R})^0 \setminus G^0(\mathbb{R}))$, and $\Lambda \in (\mathfrak{a}^{G}_{M})^{\ast}_{\mathbb{C}}$, there exist constants $d^{G^0}_{P^0}(\lbrace \pi^0_\mathbb{R}, A_{\pi^0_\mathbb{R}}  \rbrace,\Lambda,\lbrace \sigma^0_\mathbb{R}, A_{\sigma^0_\mathbb{R}}  \rbrace)$ with the following properties:\medskip

(i) $d^{G^0}_{P^0}(\lbrace \pi^0_\mathbb{R}, A_{\pi^0_\mathbb{R}}  \rbrace,\Lambda,\lbrace \sigma^0_\mathbb{R}, A_{\sigma^0_\mathbb{R}}  \rbrace) = 0$ for each $\Lambda$ such that the intersection of $\Lambda + \chi_{\pi^{\vee}_{\mathbb{R}}}$ with $(\chi_{\sigma_\mathbb{R}})_M$ is empty in $(\mathfrak{a}^G_M)^\ast_{\mathbb{C}} \oplus t^{anis}_M(\mathbb{C})^{\ast}$;

(ii) for each $X \in {\mathfrak{a}^G_P}^+$, we have\medskip

\begin{center}
$^{c}D^{G^0}_{M^0}(\lbrace \pi^0_\mathbb{R}, A_{\pi^0_\mathbb{R}}  \rbrace, X, \lbrace \sigma^0_\mathbb{R}, A_{\sigma^0_\mathbb{R}}\rbrace) = \sum_{\Lambda \in (\mathfrak{a}^G_M)^\ast_\mathbb{C}} d^{G^0}_{P^0}(\lbrace \pi^0_\mathbb{R}, A_{\pi^0_\mathbb{R}}  \rbrace,\Lambda,\lbrace \sigma^0_\mathbb{R}, A_{\sigma^0_\mathbb{R}}  \rbrace)e^{-\Lambda(X)};$\medskip
\end{center}
particular,\medskip
\begin{center}
$^{c}D^{G^0}_{M^0}(\lbrace \pi^0_\mathbb{R}, A_{\pi^0_\mathbb{R}}  \rbrace, X, \phi_\mathbb{R}) = \sum_{\Lambda \in (\mathfrak{a}^G_M)^\ast_\mathbb{C}} d^{G^0}_{P^0}(\lbrace \pi^0_\mathbb{R}, A_{\pi^0_\mathbb{R}}  \rbrace, \Lambda, \phi_\mathbb{R})e^{-\Lambda(X)},$\medskip
\end{center}
where we have set\medskip
\begin{center}
$d^{G^0}_{P^0}(\lbrace \pi^0_\mathbb{R}, A_{\pi^0_\mathbb{R}}  \rbrace, \Lambda, \phi_\mathbb{R}) = \sum_{\sigma_\mathbb{R}} \sum_{A_{\sigma_\mathbb{R}}} d^{G^0}_{P^0}(\lbrace \pi^0_\mathbb{R}, A_{\pi^0_\mathbb{R}}  \rbrace, \Lambda, \sigma_\mathbb{R})\mathrm{tr} (\sigma_\mathbb{R}(\phi_\mathbb{R})A_{\sigma_{\mathbb{R}}})$.\medskip
\end{center}
\end{cor}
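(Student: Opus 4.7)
The plan is to obtain Corollary 4.9 from Lemma 4.8 by exactly the translation procedure that produced Corollary 4.4 from Proposition 4.3. The key dictionary, recalled in Subsection 4.1.2 following Morel \cite[p.~121]{M}, is that each $\pi \in \Pi(M(\mathbb{R}))$ is determined by, and determines, a pair $\{\pi^0, A_{\pi^0}\}$ consisting of a $\theta$-stable irreducible representation $\pi^0$ of $M^0(\mathbb{R})$ together with a normalized intertwining operator $A_{\pi^0}: \pi^0 \to \pi^0 \circ \theta$. Under this bijection, the trace of $\pi$ on $\phi_{\mathbb{R}}$ corresponds to the twisted trace $\mathrm{tr}(\pi^0(\phi_{\mathbb{R}})A_{\pi^0})$, and discrete series on $G$ correspond to $\theta$-discrete representations of $G^0$.

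First I would simply \emph{define} the constants appearing in the corollary by the rule
\[
d^{G^0}_{P^0}(\{\pi^0_{\mathbb{R}}, A_{\pi^0_{\mathbb{R}}}\}, \Lambda, \{\sigma^0_{\mathbb{R}}, A_{\sigma^0_{\mathbb{R}}}\}) := d^G_P(\pi_{\mathbb{R}}, \Lambda, \sigma_{\mathbb{R}}),
\]
where $\pi_{\mathbb{R}}$ and $\sigma_{\mathbb{R}}$ are the representations of $M$ and $G$ corresponding to the indicated data. Property (i) is then inherited immediately from part (i) of Lemma 4.8, since the infinitesimal characters $\chi_{\pi^{\vee}_{\mathbb{R}}}$ and $\chi_{\sigma_{\mathbb{R}}}$ depend only on the underlying representations of $M^0$ and $G^0$. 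The Fourier expansion in (ii) then follows directly from the corresponding expansion in Lemma 4.8, once we identify $^{c}D^{G^0}_{M^0}(\{\pi^0_{\mathbb{R}}, A_{\pi^0_{\mathbb{R}}}\}, X, \{\sigma^0_{\mathbb{R}}, A_{\sigma^0_{\mathbb{R}}}\})$ with $^{c}D^G_M(\pi_{\mathbb{R}}, X, \sigma_{\mathbb{R}})$. This last identification is precisely what Corollary 4.4 provides, together with the convention recorded after that corollary that summation over $A_{\sigma^0_{\mathbb{R}}}$ means summing over the two choices $A_{\sigma^0_{\mathbb{R}}}$ and $-A_{\sigma^0_{\mathbb{R}}}$.

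The step that requires genuine care, and which is flagged at the end of the overview at the start of this section, is the appeal inside Laumon's proof of Lemma 4.8 to the Harish-Chandra character formula. This formula is not available in the non-connected real setting. The circumvention is the character identity invoked just before the statement of Lemma 4.8: in the present situation the relevant twisted character on $G^0(\mathbb{R}) \rtimes \theta$ agrees with the stable character of its norm on $\mathbf{G}(\mathbb{R})$, a connected group for which Harish-Chandra's formula is available. Thus the finite Fourier expansion obtained on $\mathbf{G}$ transports back to $G^0 \rtimes \theta$ through this identity, and the twisted normalized characters $\Phi^{L^0}_{M^0}$ of Morel \cite[p.~125]{M} are built precisely so as to make this transport compatible with the recursion relations already verified in Corollary 4.4.

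Passing the Fourier expansion from $\sigma_{\mathbb{R}}$-level to $\phi_{\mathbb{R}}$-level is then the formal calculation of setting
\[
d^{G^0}_{P^0}(\{\pi^0_{\mathbb{R}}, A_{\pi^0_{\mathbb{R}}}\}, \Lambda, \phi_{\mathbb{R}}) := \sum_{\sigma^0_{\mathbb{R}}} \sum_{A_{\sigma^0_{\mathbb{R}}}} d^{G^0}_{P^0}(\{\pi^0_{\mathbb{R}}, A_{\pi^0_{\mathbb{R}}}\}, \Lambda, \{\sigma^0_{\mathbb{R}}, A_{\sigma^0_{\mathbb{R}}}\}) \, \mathrm{tr}(\sigma^0_{\mathbb{R}}(\phi_{\mathbb{R}})A_{\sigma^0_{\mathbb{R}}}),
\]
and reading off the desired identity term-by-term. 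I expect the main obstacle to lie neither in the functional analysis nor in the abstract translation, but in the careful bookkeeping of the last two paragraphs: one must verify that the sum over the pair $(A_{\sigma^0_{\mathbb{R}}}, -A_{\sigma^0_{\mathbb{R}}})$ is correctly compatible with the two-to-one relationship between representations of $G$ and $\theta$-stable representations of $G^0$, and that the character identity used to avoid Harish-Chandra's formula is genuinely applicable for every $\sigma^0_{\mathbb{R}} \in \Pi_{\theta\text{-}2}(A_{G^0}(\mathbb{R})^0 \setminus G^0(\mathbb{R}))$ that can contribute, rather than only for those coming from a transfer from the endoscopic side.
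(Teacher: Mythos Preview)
Your approach is correct and is precisely what the paper does. The paper's own proof of this corollary is nothing more than the reference ``See Laumon, \cite[p.~316-7]{L}'', identical to the proof of the preceding lemma; the corollary is understood as a straight restatement of that lemma under the dictionary $\pi_{\mathbb{R}} \leftrightarrow \{\pi^0_{\mathbb{R}}, A_{\pi^0_{\mathbb{R}}}\}$, exactly as you have spelled out. Your discussion of the Harish-Chandra obstruction and its circumvention via the character identity also matches the paper's remarks immediately preceding the lemma. (A small housekeeping point: your internal numbering is off by two---what you call Lemma~4.8 and Corollary~4.9 are the paper's Lemma~4.10 and Corollary~4.11---but this has no effect on the argument.)
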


\begin{proof} See Laumon, \cite[p.~316-7]{L}.
\end{proof}

\begin{theorem} 
Let $\phi_{\mathbb{R}}$ be a function in $\mathcal{H}(A_G(\mathbb{R})^0 \setminus G(\mathbb{R}))$ that is very cuspidal and stable cuspidal. Then there exists a constant $C \in \mathbb{R}_+$ that only depends on the support of $\phi_{\mathbb{R}}$ and has the following property. For every function $\phi' \in C^{\infty}_c(G(\mathbb{A}_f))$ that is strongly $C$-regular, we have\medskip

\begin{center}
$J^{G}_{spec}(\phi'\phi_{R}) = \sum_{M \in L} \vert W^{M}_{0} \vert \vert W^{G}_{0} \vert^{-1} \sum_{\pi} m^{M}_{disc}(\pi) \sum_{\pi'} \Delta(\pi_{\mathbb{R}}, \pi_{\mathbb{R}}')$\medskip

$\times \sum_{P \in P(M)} \sum_{\Lambda \in (\mathfrak{a}^G_M)^\ast_{\mathbb{C}}} d^G_P(\pi'_{\mathbb{R}}, \Lambda, \phi_{\mathbb{R}})$\medskip

$\times \sum_{X \in a_{M,f}} e^{-\Lambda(s(X))} \phi_{M}'(\pi_{f}, X)$\medskip

\end{center}
where $s(X) \in \mathfrak{a}^{G+}_P$, $\pi$ ranges over $\Pi_{disc}(A_M(\mathbb{R})^0 \setminus M(\mathbb{A}))$ and $\pi'_{\mathbb{R}}$ ranges over $\Pi_{2}(A_{M}(\mathbb{R})^{0} \setminus M(\mathbb{R})).$
\end{theorem}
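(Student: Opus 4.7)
My plan is to derive this theorem by successively substituting the results already established in Section 4 into the expression of Theorem 4.7, and then simplifying the coefficients using the hypothesis that $\phi_{\mathbb{R}}$ is simultaneously very cuspidal and stable cuspidal.

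First, I would fix the constant $C$ as follows. Because $\phi_{\mathbb{R}}$ is (in particular) cuspidal, Proposition 4.2 supplies constants $C_L \in \mathbb{R}_+$ (for $L \in \mathcal{L}_{\mathbb{R}}(M)$, $L \neq G$) that depend only on the support of $\phi_{\mathbb{R}}$ and for which $J^G_M(\pi_{\mathbb{R}}',\phi_{\mathbb{R}},X) = {}^{c}D^G_M(\pi_{\mathbb{R}}',X,\phi_{\mathbb{R}})$ whenever $\|X_L\|>C_L$ for all such $L$. Proposition 4.6 then produces the constant $C \in \mathbb{R}_+$ such that any strongly $C$-regular $\phi' \in C^\infty_c(G(\mathbb{A}_f))$ satisfies $\phi'_M(\pi_f,X) = 0$ unless $\|s(X)_L\|>C_L$ for \emph{all} $L \in \mathcal{L}_{\mathbb{R}}(M)$ with $L \neq G$. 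For the $M = G$ terms this is vacuous, and for $M \neq G$ the condition forces $s(X)$ to be regular in $\mathfrak{a}^G_M$, so that $s(X)$ lies in exactly one chamber $\mathfrak{a}^{G+}_P$ associated with some $P \in \mathcal{P}(M)$.

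Next, starting from the expression in Theorem 4.7, I would substitute the finite Fourier expansion of Lemma 4.8 for ${}^{c}D^G_M(\pi'_{\mathbb{R}},s(X),\phi_{\mathbb{R}})$. For each nonzero contribution, $s(X)$ lies in the unique $\mathfrak{a}^{G+}_P$ selected above, and one obtains
\[
{}^{c}D^G_M(\pi'_{\mathbb{R}},s(X),\phi_{\mathbb{R}}) \; = \; \sum_{\Lambda \in (\mathfrak{a}^G_M)^\ast_{\mathbb{C}}} d^G_P(\pi'_{\mathbb{R}},\Lambda,\phi_{\mathbb{R}})\,e^{-\Lambda(s(X))}.
\]
Because the coincidence $P \leftrightarrow X$ is one-to-one on the support, the original sum over $X$ can be rewritten as an outer sum over $P \in \mathcal{P}(M)$ followed by an inner sum over those $X \in \mathfrak{a}_{M,f}$ with $s(X) \in \mathfrak{a}^{G+}_P$. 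This reproduces precisely the triple sum $\sum_P \sum_\Lambda \sum_X$ that appears in the statement.

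It remains to pass from $\sum_{t \geqslant 0}\sum_{\pi \in \Pi_{disc}(M,t)} a^M_{disc}(\pi)$ to $\sum_{\pi \in \Pi_{disc}(A_M(\mathbb{R})^0\setminus M(\mathbb{A}))} m^M_{disc}(\pi)$. The stable-cuspidal hypothesis on $\phi_{\mathbb{R}}$, combined with clause (i) of Lemma 4.8, forces $\pi'_{\mathbb{R}}$ to lie in $\Pi_2(A_M(\mathbb{R})^0 \setminus M(\mathbb{R}))$; by non-vanishing of $\Delta(\pi_{\mathbb{R}},\pi'_{\mathbb{R}})$ this in turn restricts $\pi_{\mathbb{R}}$ to be (the Langlands quotient of a) discrete series at infinity. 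For such $\pi$, the infinitesimal-character norm $t = \|\mathrm{Im}(\nu_\pi)\|$ is pinned down by the $L$-packet and so the outer sum over $t$ collapses to a single summand; moreover, Arthur's formula for the discrete part of the trace formula simplifies in this situation to $a^M_{disc}(\pi) = m^M_{disc}(\pi)$. This yields the desired form of the expression.

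The main obstacle is this last step: tracking exactly how the stable-cuspidal condition on $\phi_{\mathbb{R}}$, filtered through the $\Delta$-coefficients and through clause (i) of Lemma 4.8, is enough to eliminate all of the subtle correction terms in $a^M_{disc}$ coming from non-tempered or non-discrete Archimedean constituents, and to reduce it to the bare $L^2$-multiplicity $m^M_{disc}$. Everything else is a transcription of Theorem 4.7 using Proposition 4.6 and Lemma 4.8, and so follows Laumon's treatment essentially verbatim once one has checked, as Section 4 has done, that the corresponding non-connected versions of the ingredients hold in our setting.
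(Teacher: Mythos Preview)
Your overall plan is correct and is precisely the route the paper takes: it states the result without proof, implicitly assembling it (following Laumon) from Theorem~4.8, Proposition~4.7 and Lemma~4.10. Your cross-references are off by one or two in places---what you call Theorem~4.7, Proposition~4.6 and Lemma~4.8 are the paper's Theorem~4.8, Proposition~4.7 and Lemma~4.10---but the intended results are clear. The strengthening from $C$-regular to strongly $C$-regular is exactly in order to confine each surviving $s(X)$ to a single open Weyl chamber $\mathfrak a^{G+}_P$, so that Lemma~4.10 applies and the $X$-sum splits over $\mathcal P(M)$; you have this right.

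Your sketch of the final step, however, is not correct. The sum over $t\geqslant 0$ does not ``collapse to a single summand'': $\sum_t\sum_{\pi\in\Pi_{disc}(M,t)}$ is just a partition of a single sum over $\pi$, and nothing forces all contributing $\pi$ to share one infinitesimal-character norm. More seriously, the nonvanishing of $\Delta(\pi_{\mathbb R},\pi'_{\mathbb R})$ with $\pi'_{\mathbb R}\in\Pi_2$ does not make $\pi_{\mathbb R}$ itself square-integrable, and even if it did this would not yield $a^M_{disc}(\pi)=m^M_{disc}(\pi)$: Arthur's $a^M_{disc}$ carries, a priori, extra contributions from pairs $(L,s)$ with $L\subsetneq M$ and $s\in W^M(\mathfrak a_L)_{\mathrm{reg}}$ coming from his expansion of $I^M_{disc,t}$. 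In Laumon these extra terms are eliminated by unfolding that expansion and using the very-cuspidality of $\phi_{\mathbb R}$ on the resulting properly induced Archimedean factors; the stable-cuspidality hypothesis controls the range of $\pi'_{\mathbb R}$ but is not what carries the $a^M_{disc}\to m^M_{disc}$ reduction. You have correctly located the obstacle, but the mechanism you propose does not close it.
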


\begin{cor} 
Let $f_{\mathbb{R}}$ be a function in $\mathcal{H}(A_{G^0}(\mathbb{R})^0 \setminus G^0(\mathbb{R}))$ that is very cuspidal and stable cuspidal. Then there exists a constant $C \in \mathbb{R}_+$ that only depends on the support of $f_{\mathbb{R}}$ and has the following property. For every function $f' \in C^{\infty}_c(G^0(\mathbb{A}_f))$ that is strongly $C$-regular, we have\medskip

\begin{center}
$J^{G}_{spec}(f'f_{R})$ \medskip

$= \sum_{M^0 \in L^0} \vert W^{M}_{0} \vert \vert W^{G}_{0} \vert^{-1}$\medskip

$\times \sum_{\pi} m^{M^0}_{disc}(\pi) \sum_{A_{\pi^0_{\mathbb{R}}}}  \sum_{\pi'^0_{\mathbb{R}}} \sum_{A_{\pi'^0_{\mathbb{R}}}} \Delta(\lbrace \pi^0_{\mathbb{R}},A_{\pi^0_\mathbb{R}}\rbrace, \lbrace \pi'^0_{\mathbb{R}},A_{\pi'^0_\mathbb{R}}\rbrace )$\medskip

$\times \sum_{P^0 \in P(M^0)} \sum_{\Lambda \in (\mathfrak{a}^{G^0}_{M^0})^\ast_{\mathbb{C}}} d^{G^0}_{P^0}(\lbrace \pi'^0_\mathbb{R}, A_{\pi'^0_\mathbb{R}}  \rbrace, \Lambda, \phi_\mathbb{R})$\medskip

$\times \sum_{X \in a_{M,f}} e^{-\Lambda(s(X))} \phi_{M}'(\pi_{f}, X)$\medskip

\end{center}
\textit{where $s(X) \in \mathfrak{a}^{G+}_P$, $\pi$ ranges over $\Pi_{disc}(A_{M^0}(\mathbb{R})^0 \setminus M^0(\mathbb{A}))$ and $\pi'_{\mathbb{R}}$ ranges over $\Pi_{2}(A_{M^0}(\mathbb{R})^{0} \setminus M^0(\mathbb{R}))$, and where the quantities $d^{G^0}_{P^0}$ are computed as in corollary} 4.11.
\end{cor}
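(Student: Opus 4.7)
The plan is to obtain Corollary 4.12 directly from Theorem 4.10 by translating every object attached to the non-connected set $G$ into the corresponding object attached to the connected group $G^0$ equipped with the $\theta$-action. The dictionary is the one set up in Subsection 4.1.2: an irreducible admissible representation $\pi$ of $M(\mathbb{A})$ corresponds to a pair $\{\pi^0, A_{\pi^0}\}$ consisting of a $\theta$-stable irreducible representation $\pi^0$ of $M^0(\mathbb{A})$ together with a normalized intertwining operator $A_{\pi^0}: \pi^0 \to \pi^0 \circ \theta$, the two choices $\pm A_{\pi^0}$ corresponding to the two extensions of $\pi^0$ to $\tilde{M}(\mathbb{A})$. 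The same dictionary applies at the real place and at the finite adeles. Under this correspondence, Levi subsets $M$ of $G$ match up with $\theta$-stable Levi subgroups $M^0$ of $G^0$, the set $\mathcal{L}$ with $\mathcal{L}^\theta$, and the test function $\phi$ on $G(F)$ matches the test function $\phi^0$ on $G^0(F)$ via $\phi(x \rtimes \theta) = \phi^0(x)$.

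First I would rewrite each ingredient appearing on the right-hand side of Theorem 4.10 in the $G^0$-language. The coefficient $a^M_{disc}(\pi)$ becomes $a^{M^0}_{disc}(\pi^0)$ (with the discrete multiplicity $m^M_{disc}$ likewise transported), and a single sum over $\pi \in \Pi_{disc}(M,t)$ is replaced by a double sum over $\pi^0$ and over the choice $A_{\pi^0_{\mathbb{R}}}$ of normalized intertwining operator at the real place; the same translation is applied to the summation over $\pi' \in \Pi_2(A_M(\mathbb{R})^0 \setminus M(\mathbb{R}))$. The transfer factor $\Delta(\pi_{\mathbb{R}}, \pi'_{\mathbb{R}})$ is, by definition, $\Delta(\{\pi^0_{\mathbb{R}}, A_{\pi^0_{\mathbb{R}}}\}, \{\pi'^0_{\mathbb{R}}, A_{\pi'^0_{\mathbb{R}}}\})$, and the Fourier coefficients $d^G_P(\pi'_{\mathbb{R}}, \Lambda, \phi_{\mathbb{R}})$ are replaced by their $G^0$-versions $d^{G^0}_{P^0}(\{\pi'^0_{\mathbb{R}}, A_{\pi'^0_{\mathbb{R}}}\}, \Lambda, f_{\mathbb{R}})$ as given by Corollary 4.11. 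Finally, the parabolic subsets $P \in \mathcal{P}(M)$ correspond to the $\theta$-stable parabolic subgroups $P^0 \in \mathcal{P}(M^0)$, and the constituents $\mathfrak{a}^G_M$, $\mathfrak{a}_{M,f}$, and the map $s$ match their $G^0$-counterparts up to the identification $\mathfrak{a}_M \simeq \mathfrak{a}_{M^0}^\theta$ recorded in Subsection 4.1.1.

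With this translation in place, the asserted identity for $J^G_{spec}(f'f_{\mathbb{R}})$ becomes, term by term, the identity of Theorem 4.10, and the existence of the constant $C$ (depending only on the support of $f_{\mathbb{R}}$) and the conclusion for all strongly $C$-regular $f' \in C^{\infty}_c(G^0(\mathbb{A}_f))$ carry over without change. The main point to verify, rather than a real obstacle, is the bookkeeping for the sign ambiguity in the choice of each $A_{\pi^0_{\mathbb{R}}}$ and $A_{\pi'^0_{\mathbb{R}}}$: since the summand is bilinear in the pair $(A_{\pi^0_{\mathbb{R}}}, A_{\pi'^0_{\mathbb{R}}})$ in the appropriate way, summing over the two normalizations at each occurrence reproduces exactly the sum over the corresponding representations of $M$ in Theorem 4.10, as already observed after Corollary 4.4. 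Hence the corollary is a faithful restatement of Theorem 4.10, and no new analytic input is required.
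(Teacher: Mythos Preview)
Your proposal is correct and matches the paper's approach exactly: the corollary is simply the term-by-term translation of the preceding theorem (Theorem~4.12 in the paper's numbering, not~4.10) into the $G^0$-with-$\theta$ language via the dictionary of Subsection~4.1.2, and the paper treats it as such, giving no separate proof. Your remark on the sign bookkeeping for the $A_{\pi^0_{\mathbb{R}}}$-sums is precisely the content of the comment following Corollary~4.4.
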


The usefulness of Theorem 4.12 is due to the fact that it is easy to produce functions that are strongly $C$-regular, as the following lemma will show. The strategy is to fix a finite set of places $S$ of $\mathbb{Q}$ and make a suitable choice of a test function at the places in $S$. For any function $\phi'' \in C^{\infty}_c(G(\mathbb{A}^S_f))$, let\medskip
\begin{center}
$C(\phi'') = \sup_{M, \alpha, P, m} \lbrace \vert \alpha(H^S_{M,f}(m))\vert \rbrace,$\medskip
\end{center}
where $M$ ranges over $\mathcal{L} - \lbrace G \rbrace$, $\alpha$ over the roots of $A_M$ in $G$, $P$ over $\mathcal{P}(M)$, and $m$ over the support of $\phi_P''$ in $M(\mathbb{A}^S_f)$.

\begin{lemma}
Let $C \in \mathbb{R}_+$ and, for each $q \in S$, let $\phi_q \in C_c(G(\mathbb{Q}_q)//K_{max, q})$. Suppose that, for every collection $(\mu_q)_{q \in S}$ \textit{with} $\mu_q \in $ Supp$(f^\vee_q)$, every $M \in \mathcal{L}$, and every root $\alpha$ of $A_M$ in $G$, we have\medskip
\begin{center}
$\vert \sum_{q \in S} \alpha (\mu_{q, M}) \log q \vert > C + C(\phi'').$\medskip
\end{center}
Then, the function\medskip
\begin{center}
$\phi' = \phi'' \prod_{q \in S} \phi_q \in C^{\infty}_c(G(\mathbb{A}_f))$\medskip
\end{center}
\textit{is strongly $C$-regular. Further, for every $M \in \mathcal{L}$, every $P \in \mathcal{P}(M)$, and every $\Lambda \in (\mathfrak{a}^G_M)^\ast_{\mathbb{C}}$, we have}\medskip
\begin{center}
$\sum_{X \in \mathfrak{a}_{M,f}} e^{-\Lambda(s(X))} \phi'_M(\pi_f, X)$\medskip

$= \mathrm{tr}(I_P(\pi^S_{f, \Lambda}, \phi'')) \times \sum e^{-\Lambda(s(X_S))} f_{S,M} (\pi_S, X_S),$\medskip
\end{center}
\textit{where in the first sum $s(X) \in \mathfrak{a}^{G+}_P$}.
\end{lemma}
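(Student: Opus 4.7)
The plan is to handle the two assertions of the lemma separately. First I would verify strong $C$-regularity directly from its definition. Given $m \in M(\mathbb{A}_f)$ lying in the support of $\phi'_P$, decompose $m = m'' \prod_{q \in S} m_q$ with $m'' \in M(\mathbb{A}^S_f)$ and $m_q \in M(\mathbb{Q}_q)$. The factorization of $\phi'$ forces $\phi''_P(m'') \neq 0$ and $\phi_{q,P}(m_q) \neq 0$ for every $q \in S$; the first condition yields $\vert \alpha(H^S_{M,f}(m'')) \vert \leq C(\phi'')$ by the very definition of $C(\phi'')$. Since each $\phi_q$ is bi-$K_{max,q}$-invariant, its constant term $\phi_{q,P}$ is supported on finitely many $(K_{max,q} \cap M(\mathbb{Q}_q))$-double cosets of the form $K_{max,q} \mu_q(\varpi_q) K_{max,q}$ with $\mu_q$ as in the hypothesis, and there one has $H_{M,q}(m_q) = -\mu_{q,M} \log q$. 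Adding these contributions and applying the reverse triangle inequality yields, for every root $\alpha$ of $A_M$ in $G$,
$$\bigl\vert \alpha(H_{M,f}(m))\bigr\vert \geq \Bigl\vert \sum_{q \in S} \alpha(\mu_{q,M}) \log q\Bigr\vert - \bigl\vert \alpha(H^S_{M,f}(m''))\bigr\vert > (C + C(\phi'')) - C(\phi'') = C,$$
which is strong $C$-regularity.

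For the factorization identity, I would use that the constant term along $P$ is multiplicative across places, so that $\phi'_P(m) = \phi''_P(m'') \prod_{q \in S} \phi_{q,P}(m_q)$. Decomposing $\mathfrak{a}_{M,f} = \mathfrak{a}^S_{M,f} \oplus \mathfrak{a}_{M,S}$ and $\pi_f = \pi^S_f \otimes \pi_S$, this multiplicativity together with the definition of $\phi'_M(\pi_f, X)$ as the Fourier transform over $i\mathfrak{a}^\ast_{M,f}/\mathfrak{a}^\vee_{M,f}$ of the induced trace $\mathrm{tr}(I_P(\pi_{f,\lambda}, \phi'_P))$ yields
$$\phi'_M(\pi_f, X) = \phi''_M(\pi^S_f, X^S) \cdot f_{S,M}(\pi_S, X_S)$$
for $X = X^S + X_S$. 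The outer sum then splits as a product of an $X^S$-sum and an $X_S$-sum, and the latter is precisely the second factor in the desired expression.

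The main obstacle is identifying the $X^S$-sum $\sum_{X^S} e^{-\Lambda(s(X^S))} \phi''_M(\pi^S_f, X^S)$ with the induced trace $\mathrm{tr}(I_P(\pi^S_{f,\Lambda}, \phi''))$. Here I would appeal to Fourier inversion on the lattice $\mathfrak{a}^S_{M,f}$: because $\phi''$ has compact support, the trace $\lambda \mapsto \mathrm{tr}(I_P(\pi^S_{f,\lambda}, \phi''))$ is a finite linear combination of characters of this lattice, so summing $e^{-\Lambda(s(X^S))}$ against its Fourier transform $\phi''_M(\pi^S_f, X^S)$ recovers the value at $\Lambda$. The compact support of $\phi''$, together with the restriction $s(X) \in \mathfrak{a}^{G+}_P$, is exactly what guarantees that the manipulation is purely algebraic and free of convergence issues. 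The whole argument is the one Laumon carries out in the connected case \cite[p.~311]{L}; since every step operates on the underlying connected group via the identification $\phi(x \rtimes \theta) = \phi^0(x)$, it transfers to the non-connected setting without modification.
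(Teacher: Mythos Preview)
Your proposal is correct and follows the same route as the paper, which simply refers to Laumon \cite[pp.~318--319]{L}; you have faithfully reconstructed that argument and correctly observed that it passes unchanged to the non-connected setting via $\phi(x\rtimes\theta)=\phi^0(x)$. The one step you leave slightly implicit is why the positivity constraint $s(X)\in\mathfrak{a}^{G+}_P$ decouples when the sum splits: this is because your first computation shows $\lvert\alpha(s(X_S))\rvert>C(\phi'')\geq\lvert\alpha(s(X^S))\rvert$ for every relevant root, so the sign of $\alpha(s(X))$ is determined by $X_S$ alone and the $X^S$-sum is genuinely unconstrained, permitting the Fourier inversion you invoke.
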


\begin{proof} Laumon, \cite[p.~318-9]{L}.
\end{proof}

The following theorem gives our final general expression for the spectral side of the trace formula.

Let $f^{\mathbf{G}}_{\mathbb{R}}$ be a pseudo-coefficient of the discrete series $L$-packet $\Pi$ as in subsection 2.2.2, and let $\phi^G_{\mathbb{R}}$ be a twisted pseudo-coefficient of the representation of $G^0(\mathbb{R})$ associated to $\Pi$. Then $f^{\mathbf{G}}_{\mathbb{R}}$ and $d(G)\phi^G_{\mathbb{R}} = 4\phi^G_{\mathbb{R}}$ are associated, by \cite[p.124]{M}.

Let $f^{\mathbf{H}}_{\mathbb{R}}$ be the linear combination of pseudo-coefficients of the three $L$-packets of $\mathbf{H}$ associated to $\Pi$ as in subsection 2.2.2, and let $\phi^H_{\mathbb{R}}$ be the same linear combination of twisted pseudo-coefficients of $H^0(\mathbb{R})$ associated to these three $L$-packets. Then $f^{\mathbf{H}}_{\mathbb{R}}$ and $d(H)\phi^H_{\mathbb{R}} = 4\phi^H_{\mathbb{R}}$ are associated.

We may and will assume that the functions $f^{\mathbf{G}}_{\mathbb{R}}, \phi^G_{\mathbb{R}},f^{\mathbf{H}}_{\mathbb{R}}$ and $\phi^H_{\mathbb{R}}$ are very cuspidal \cite[p.~320]{L}.

Given any function $f^p \in C_c(\mathbf{G}(\mathbb{A}^{p}_f)//K^{p})$, let $\phi^p$ be associated to $f^p$. Fix an auxiliary prime $q \neq p$ such that $K^p = K^{p,q}K_q$, where $K_q = G(\mathbb{Z}_q)$, and such that $\phi^p = \phi^{p,q}\mathbf{1}_{K_q}$, with $\phi^{p,q} \in C_c(G(\mathbb{A}^{p,q}_f)//K^{p,q})$. We may replace $\phi^p$ with $\phi^{p,q}\phi_q$, where $\phi_q$ is for now arbitrary.

Let $h^{p,q}$ be a transfer of $f^{p,q}$, as in \cite[p.~321]{L}, and let $\varphi^{p,q}$ be associated to $h^{p,q}$. Let $\varphi_q = b^H(\phi_q)$ be defined as in \cite[p.~321]{L}.

\begin{theorem} 
Fix prime numbers $p \neq q$ and test functions as above. Then there exists a constant $D \in \mathbb{R}_+$ satisfying the following property. For every function $\phi_q \in C_c(G(\mathbb{Q}_q) // K_q)$ and every integer $j > 0$ such that

(i) $\vert \alpha(\mu)\vert \log q > D$ $(\forall \alpha \in \Phi(A_T, G), \forall \mu \in$ Supp$(\phi^{\vee}_q)),$

(ii) $j > \frac{D+\vert\alpha(\mu)\vert \log q}{\log p}$ $(\forall \alpha \in \Phi(T,G), \forall \mu \in $Supp$(\phi^{\vee}_q)),$

\noindent the spectral side $J^G_{spec}(\phi)$ of the trace formula for $G$ can be written as\medskip
\begin{center}
$J^G_{spec}(\phi^G) = \sum_M \vert W^{M}_{0}\vert \vert W^{G}_{0}\vert ^{-1} J^G_M(\phi^G)$\medskip
\end{center}
\textit{where the sum is over the Levi subsets of G, and}\medskip

\begin{center}
$J^G_M(\phi^G) = \sum_{\pi}m^M_{disc}(\pi) \sum_{\pi_{\mathbb{R}}'} \Delta(\pi, \pi')$ \linebreak \medskip

$\times \sum_{P \in P(M)} \sum_{\Lambda \in (\mathfrak{a}^G_M)^{\ast}_{\mathbb{C}}} d^G_P(\pi_{\mathbb{R}}', \Lambda, \phi^G_{\mathbb{R}})\mathrm{tr}(I_P(\pi^{p,q}_{f, \Lambda}, \phi^{p,q}))$ \linebreak \medskip

$\times \sum_{\nu \in W \cdot \mu} p^{j(\Lambda(\nu^G_M)+1)} \nu(t_{\pi_p})^j$\linebreak

$\times \sum_{X_q \in \mathfrak{a}_{M, q}} e^{-\Lambda(X^G_q)} \phi_{q,M}(\pi_q, X_q)$,\medskip
\end{center}
\textit{where in the final sum, we require} $X^G_q \in j \nu^G_M logp + \mathfrak{a}^{G +}_P $.
\end{theorem}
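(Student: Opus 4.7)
The plan is to apply Corollary 4.12 to the test function $\phi^G = \phi^G_{\mathbb{R}} \cdot \phi'$ with $\phi' = \phi^G_j \cdot \phi^{G,p}$, use the choice of test functions from subsection 2.2.2 to verify the Archimedean hypotheses, and then use Lemma 4.13 with $S = \{p, q\}$ to make the local factor at $\{p,q\}$ completely explicit. The Archimedean piece $\phi^G_{\mathbb{R}}$ may be taken very cuspidal and stable cuspidal, as noted above following \cite[p.~320]{L}. Corollary 4.12 therefore furnishes a constant $C \in \mathbb{R}_+$ depending only on the support of $\phi^G_{\mathbb{R}}$ such that, whenever $\phi'$ is strongly $C$-regular, $J^G_{spec}(\phi^G)$ is given by the explicit double sum appearing in that corollary.

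Setting $D := C + C(\phi^{p,q})$, hypotheses (i) and (ii) of the present theorem are designed exactly so that the combined Satake data at $p$ and $q$ satisfy the bound $|\alpha(\mu_p)\log p + \alpha(\mu_q)\log q| > C + C(\phi^{p,q})$ for every root $\alpha$ of $A_M$ and every $\mu_p \in \mathrm{Supp}(\phi^{G,\vee}_j)$, $\mu_q \in \mathrm{Supp}(\phi^{\vee}_q)$; here $\mathrm{Supp}(\phi^{G,\vee}_j)$ is a Weyl translate of $j\mu\log p$ by the definition of $\phi^G_j = b^G_j(\varphi_j)$ together with the Satake-compatibility of base change. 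This is precisely the hypothesis of Lemma 4.13, which simultaneously produces the strong $C$-regularity of $\phi'$ and factors the inner sum $\sum_{X \in \mathfrak{a}_{M,f}} e^{-\Lambda(s(X))} \phi'_M(\pi_f, X)$ as $\mathrm{tr}(I_P(\pi^{p,q}_{f,\Lambda}, \phi^{p,q}))$ times a local $\{p,q\}$-factor. Since $\phi^G_j$ and $\phi_q$ live on disjoint tensor factors, this local factor splits as a product of a $p$-contribution and a $q$-contribution, the latter being exactly $\sum_{X_q \in \mathfrak{a}_{M,q}} e^{-\Lambda(X^G_q)} \phi_{q,M}(\pi_q, X_q)$ of the theorem.

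The remaining task is to compute the $p$-contribution explicitly. By construction, $\phi^G_j$ is the image under the base-change map $b^G_j$ of the characteristic function $\varphi_j$ of the Kottwitz double coset $K_{p^j}\mu(p)K_{p^j}$. Applying the Kottwitz formula for the Satake transform of $\varphi_j$ together with the compatibility of $b^G_j$ with constant terms and with the Satake isomorphism, the constant term $(\phi^G_j)_M$ evaluated against an unramified representation $\pi_p$ with Satake parameter $t_{\pi_p}$ contributes a Weyl-orbit sum $\sum_{\nu \in W \cdot \mu} p^{j(\Lambda(\nu^G_M) + 1)} \nu(t_{\pi_p})^j$, subject to the support constraint $X^G_q \in j\nu^G_M \log p + \mathfrak{a}^{G+}_P$ coming from the chamber condition $s(X) \in \mathfrak{a}^{G+}_P$. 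The exponent $\Lambda(\nu^G_M) + 1$ reflects the combined contribution of the eigencharacter $\Lambda$ and the modular character shift $\delta_P^{1/2}$ built into the normalized constant term. Assembling these factors with the global coefficients from Corollary 4.12 produces the claimed formula for $J^G_M(\phi^G)$.

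The main obstacle is the $p$-adic computation, specifically ensuring that the Weyl orbit sum, the exponent $\Lambda(\nu^G_M) + 1$, and the chamber condition $X^G_q \in j\nu^G_M \log p + \mathfrak{a}^{G+}_P$ are produced with the correct normalizations in the non-connected setting. However, the earlier corollaries of this section have already re-indexed the spectral side in terms of $G^0$-data and twisted intertwining operators in a way that matches Morel's conventions \cite{M}, and both the Kottwitz function $\varphi_j$ and the base-change map $b^G_j$ are defined so that this part of Laumon's computation \cite[Sect.~4]{L} transfers without substantive modification. The non-connectedness of $G$ therefore intervenes only through the indexing of $\theta$-stable Levi subsets and the choice of normalized intertwiners, both of which have been handled uniformly in the preceding results.
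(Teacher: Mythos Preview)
Your proposal is correct and follows essentially the same route as the paper's own proof: set $D = C + C(\phi^{p,q})$, observe that hypotheses (i) and (ii) are precisely what is needed to invoke the strong $C$-regularity lemma (Lemma~4.14 in the paper's numbering; you cite it as Lemma~4.13) for the function $\phi^{p,q}\phi_q b^G_j(\varphi_j)$, and then feed this into Theorem~4.12 (which you call Corollary~4.12). The paper's proof is a two-sentence pointer to Laumon's remark (4.20) and Lemma~4.14, whereas you spell out in addition the factorization of the $\{p,q\}$-local sum and the explicit Satake computation at $p$; this extra detail is implicit in the statement of the theorem and does not constitute a different argument. Aside from the off-by-one in your internal references, the two proofs coincide.
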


\begin{proof} The conditions (i) and (ii) guarantee that the conditions (i) and (ii) of Laumon's remark (4.20) \cite[p.~320]{L} are satisfied for $\phi^{p,q}\phi_qb^G_j(\varphi_j)$ when $D = C + C(\phi^{p,q})$. This, in turn, guarantees that the assumptions of his lemma 4.19, which is our lemma 4.14, are satisfied.
\end{proof}

\pagebreak
We also need the analogue of theorem 4.15 for the base change $H$ of the endoscopic group $\mathbf{H}$.

\begin{theorem}
In the setting of theorem $4.15$, the spectral side $J^H_{spec}(\varphi)$ of the trace formula for $H$ can be written as\medskip
\begin{center}
$J^H_{spec}(\varphi^H) = \sum_M \vert W^{M}_{0}\vert \vert W^{H}_{0}\vert ^{-1} J^H_M(\varphi^H)$\medskip
\end{center}
\textit{where the sum is over the Levi subsets of H, and}\medskip

\begin{center}
$J^H_M(\varphi^H) = \sum_{\pi}m^M_{disc}(\pi) \sum_{\pi_{\mathbb{R}}'} \Delta(\pi, \pi')$ \linebreak

$\times \sum_{P \in \mathcal{P}(M)} \sum_{\Lambda \in (\mathfrak{a}^H_M)^{\ast}_{\mathbb{C}}} d^H_P(\pi_{\mathbb{R}}', \Lambda, \varphi^H_{\mathbb{R}}) \mathrm{tr}(I_P(\pi^{p,q}_{f, \Lambda}, \varphi^{p,q}))$ \linebreak

$\times \sum_{\nu} p^{j(\Lambda(\nu^H_M)+1)} \nu(t_{\pi_p})^j$\linebreak

$\times \sum_{X_q \in \mathfrak{a}_{M, q}} e^{-\Lambda(X^G_q)} \varphi_{q,M}(\pi_q, X_q)$,\medskip
\end{center}
where $\nu$ ranges over the (twisted) transfer of the orbit of cocharacters $W.\mu$, and where in the final sum, we require $X^H_q \in j \nu^H_M logp + \mathfrak{a}^{H +}_P $.
\end{theorem}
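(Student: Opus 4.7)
The plan is to mirror the proof of Theorem 4.15, applying Corollary 4.13 (the explicit spectral formula expressed on the $H^0$ side) with the $H$-test function $\varphi^H = \varphi^H_{\mathbb{R}} \cdot b^H_j(\varphi_j) \cdot \varphi^{p,q} \cdot \varphi_q$ in place of the $G$-test function. First I would verify that the hypotheses of Corollary 4.13 are met: the Archimedean component $\varphi^H_{\mathbb{R}}$ is very cuspidal and stable cuspidal by the choice of pseudo-coefficients at infinity made in subsection 2.2.2, so only the strong $C$-regularity of the finite part needs to be produced. For this I would invoke Lemma 4.14 applied to $H$, taking $S = \{p, q\}$ and $\varphi'' = \varphi^{p,q}$ together with the local factors $b^H_j(\varphi_j)$ and $\varphi_q = b^H(\phi_q)$.

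The key point is that conditions (i) and (ii) of Theorem 4.15, which via Laumon's remark (4.20) ensure the hypotheses of Lemma 4.14 for the function $\phi^{p,q}\phi_q b^G_j(\varphi_j)$ on $G$, carry over to the analogous statements on $H$ with the same constant $D$. Indeed, $\varphi^{p,q}$ is associated to the transfer $h^{p,q}$ of $f^{p,q}$, and both $\varphi_q = b^H(\phi_q)$ and $b^H_j(\varphi_j)$ are base-change images whose supports in the dual Levi are controlled by the supports of $\phi_q^\vee$ and $\varphi_j^\vee$ respectively. Because base change and endoscopic transfer preserve the relevant norms on $\mathfrak{a}_M$, we can absorb $C(\varphi^{p,q})$ into $C(\phi^{p,q})$ at the cost of enlarging $D$ by a universal amount, which we simply build into the original choice of $D$ in Theorem 4.15.

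With Lemma 4.14 in hand, the only substantive change from Theorem 4.15 is the appearance of the cocharacter orbit. On the $G$-side, the $p$-adic component $b^G_j(\varphi_j)$ produces the sum $\sum_{\nu \in W\cdot\mu}$ via the explicit computation of $\phi_{p,M}$ in Laumon \cite[p.~320]{L}. On the $H$-side, the same computation applied to $b^H_j(\varphi_j)$ yields instead a sum over those cocharacters of a Levi of $H$ that arise as the twisted endoscopic transfer of $W\cdot\mu$; this is exactly the index set $\nu$ in the statement. Substituting into Corollary 4.13 produces the stated formula, with $d^H_P$ coefficients computed by Corollary 4.11 and the constraint $X_q^H \in j\nu_M^H \log p + \mathfrak{a}_P^{H+}$ coming directly from the conclusion of Lemma 4.14 on $H$.

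The main obstacle is the bookkeeping at the prime $p$: verifying that $b^H_j(\varphi_j)$, when expanded under the Satake/base-change identifications, really does present the twisted transfer of $W\cdot\mu$ as its support set of cocharacters, and that the exponents $p^{j(\Lambda(\nu_M^H)+1)}$ and Hecke eigenvalues $\nu(t_{\pi_p})^j$ emerge with the correct normalization. Once this is settled (it is implicit in Laumon's construction on the endoscopic side \cite[p.~321]{L}), the remainder of the argument is a line-by-line transcription of Theorem 4.15 with $G$ replaced by $H$.
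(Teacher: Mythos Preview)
Your proposal is correct and follows exactly the approach the paper intends: the paper gives no separate proof of Theorem 4.16, presenting it simply as ``the analogue of theorem 4.15 for the base change $H$ of the endoscopic group $\mathbf{H}$,'' and your argument is precisely a transcription of the proof of Theorem~4.15 with $G$ replaced by $H$, the only new ingredient being that the $p$-adic cocharacter sum runs over the twisted endoscopic transfer of $W\cdot\mu$ rather than $W\cdot\mu$ itself. Your identification of the bookkeeping at $p$ (that $b^H_j(\varphi_j)$ supports the transferred cocharacters with the correct exponents) as the only point requiring care matches the paper's implicit reliance on Laumon \cite[p.~321]{L} for this step.
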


Finally, we can write down a version of each of theorem 4.15 and 4.16 in terms of $\theta$-stable representations of $G^0(\mathbb{R})$ and $H^0(\mathbb{R})$, respectively. In fact, for our purposes it is enough to replace the index $\pi'_{\mathbb{R}}$ in the quantity $d^G_P(\pi_{\mathbb{R}}', \Lambda, \phi^G_{\mathbb{R}})$ by $\lbrace \pi'^0_{\mathbb{R}},A_{\pi'^0_\mathbb{R}}\rbrace$, namely the data corresponding to the representation $\pi'_{\mathbb{R}}$ of $G(\mathbb{R})$, and similarly for $H$. In the next section, we will compute the quantities $d^G_P(\lbrace \pi'^0_{\mathbb{R}},A_{\pi'^0_\mathbb{R}}\rbrace, \Lambda, \phi^G_{\mathbb{R}})$ in the cases we are interested in, namely $M^0 = T^0$ for both $G$ and $H$.\medskip

Recall that we had by theorem 2.8:\medskip

\begin{center}
$N(j,f^p)= T_{e}^{G}(\phi^{G})+ T_{e}^{H}(\phi^{H}).$\medskip
\end{center}
Given the results of this section, we now have

\begin{theorem} 
In the setting of theorems $2.8$ and $4.15$, we have\medskip

\begin{center}
$T_{e}^{G}(\phi) = J_{G}^{G}(\phi^{G}) + \vert W_{0}^{T} \vert \vert W_{0}^{G}\vert ^{-1}J_{T}^{G}(\phi^{G}),$\medskip
\end{center}

\begin{center}
$T_{e}^{H}(\phi) = J_{H}^{H}(\phi^{H}) + \vert W_{0}^{T} \vert \vert W_{0}^{H} \vert ^{-1}J_{T}^{H}(\phi^{H}).$
\end{center}

Thus,\medskip

\begin{center}
$N(j, f^p) = J_{G}^{G}(\phi^{G}) + J_{H}^{H}(\phi^{H})$\medskip

\hspace{15mm} $+ \frac{1}{2} (J_{T}^{G}(\phi^{G}) + J_{T}^{H}(\phi^{H})).$\medskip
\end{center}
\end{theorem}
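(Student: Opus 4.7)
The plan is to chain together the transitions already established in the paper. First I would invoke Theorem 3.1 to replace $T^G_e(\phi^G)$ by $J^G_{\mathrm{geom}}(\phi^G)$, the geometric side of the non-invariant twisted trace formula for $G$. For this I need $\phi^G_\mathbb{R}$ to be stable cuspidal and very cuspidal (which is arranged in Subsection 2.2.2 and the discussion preceding Theorem 4.15) and the finite-adelic factor to be $C$-regular for an appropriate constant $C$. The non-invariant trace formula then gives $J^G_{\mathrm{geom}}(\phi^G) = J^G_{\mathrm{spec}}(\phi^G)$. At this point Theorem 4.15 expands the spectral side as a sum over Levi subsets:
\begin{equation*}
J^G_{\mathrm{spec}}(\phi^G) = \sum_{M \in \mathcal{L}} \lvert W^M_0 \rvert \lvert W^G_0 \rvert^{-1} J^G_M(\phi^G).
\end{equation*}
The analogous chain of identifications applies to $H$ via Theorem 4.16.

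The second step is a combinatorial reduction. Since $\mathbf{G} = \mathbf{GU}(2,1)$ is quasi-split of $\mathbb{Q}$-rank one, its only $\mathbb{Q}$-Levi subgroups are $\mathbf{G}$ itself and the diagonal torus $\mathbf{T}$, and correspondingly the $\theta$-stable Levi subsets of $G = R_{E/\mathbb{Q}}\mathbf{G}_E \rtimes \theta$ are exactly $G$ and $T = T^0 \rtimes \theta$. The same holds for $\mathbf{H} = \mathbf{G}(\mathbf{U}(1) \times \mathbf{U}(1,1))$, whose $\mathbf{U}(1,1)$-factor makes it also of $\mathbb{Q}$-rank one, so the Levi subsets of $H$ are $H$ and $T$. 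Each spectral sum therefore has precisely two terms, and writing the $M=G$ (respectively $M=H$) contributions with the trivial ratio $\lvert W^G_0\rvert\lvert W^G_0\rvert^{-1} = 1$ gives
\begin{equation*}
T^G_e(\phi^G) = J^G_G(\phi^G) + \lvert W^T_0\rvert\lvert W^G_0\rvert^{-1} J^G_T(\phi^G),
\end{equation*}
and likewise for $H$.

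The third step is the numerical computation of the Weyl group ratio. One has $\lvert W^T_0\rvert = 1$ since $T^0$ is a torus, while $W^G_0$ and $W^H_0$ are both identified via Morel's conventions \cite[p.~120]{M} for the $\theta$-stable Levi subsets of a quasi-split unitary similitude group of the given type, and each has order $2$. This yields the coefficient $\tfrac{1}{2}$ in front of both $J^G_T(\phi^G)$ and $J^H_T(\phi^H)$. Finally, substituting the two decompositions into the equality $N(j,f^p) = T^G_e(\phi^G) + T^H_e(\phi^H)$ of Theorem 2.8 gives
\begin{equation*}
N(j,f^p) = J^G_G(\phi^G) + J^H_H(\phi^H) + \tfrac{1}{2}\bigl(J^G_T(\phi^G) + J^H_T(\phi^H)\bigr),
\end{equation*}
which is the claim.

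The main obstacle is not the bookkeeping in steps 2--3 but rather the verification that the hypotheses of Theorems 3.1 and 4.15 can both be met by the test functions of Subsection 2.2.2. One needs $\phi^G_\mathbb{R}$ simultaneously stable cuspidal and very cuspidal, and the constant $C$ appearing in Theorem 3.1 (governing $C$-regularity at the finite places) must be compatible with the one arising from Theorem 4.15; this is precisely why conditions (i) and (ii) of Theorem 4.15 are stated in terms of the support of $\phi^\vee_q$ and the exponent $j$, and why Lemma 4.14 is needed to manufacture strongly $C$-regular functions. Once those consistency checks are in place, the rest of the argument is a direct substitution.
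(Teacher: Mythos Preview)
Your proposal is correct and follows exactly the route the paper intends: the paper presents this theorem without an explicit proof, stating simply ``Given the results of this section, we now have'' the displayed identities, so it is tacitly invoking precisely the chain Theorem 2.8 $\to$ Theorem 3.1 $\to$ trace formula $\to$ Theorems 4.15/4.16 together with the rank-one Levi structure of $\mathbf{GU}(2,1)$ and $\mathbf{G}(\mathbf{U}(1)\times\mathbf{U}(1,1))$. Your discussion of the compatibility of the regularity constants and the cuspidality hypotheses on $\phi^G_{\mathbb{R}}$ is the only point requiring care, and the paper handles it in the same way you indicate, via Lemma 4.14 and the conditions (i), (ii) of Theorem 4.15.
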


For the remainder of this paper, we will focus on computing the parabolic part of the trace, namely $\frac{1}{2}(J^{G}_{T}(\phi^{G}) + J^{H}_{T}(\phi^{H}))$.

\section{Computations at Infinity}

\subsection{Strategy For Computing the $d$'s}

We will begin by describing informally Laumon's method in \cite{L} for computing the quantities $d$ introduced in the previous section. For the purposes of illustrating the method, we will suppose that the $L$-packet in question has two members only.

1. Let $M$ be a Levi subgroup of $G$, and let $\rho$ be a representation of $M$. We would like to compute the coefficients $d$ in the following finite expansion:\medskip
\begin{center}
$^cD^{G}_{M}(\rho, X, f)=\sum_{\Lambda \in (\mathfrak{a}^{G}_{M})^{\ast}_{\mathbb{C}}}d^{G}_{P}(\rho, \Lambda, f)e^{-\Lambda(X)}$\medskip
\end{center}
where $f = f_{\pi_{1}}+f_{\pi_{2}}$ is a pseudo-coefficient of a discrete series $L$-packet $\Pi = \lbrace \pi_{1},\pi_{2}\rbrace$ of $G$.

2. We will first compute the left-hand side of the expression in 1 for every $\rho$ (most of these quantities will be zero). We will utilize the fact that\medskip
\begin{center}
$^cD^{G}_{M}(\rho, X, f)=\sum_{\sigma}$ $^cD^{G}_{M}(\rho, X, \sigma) \mathrm{tr}(\sigma(f)),$\medskip
\end{center}
where $\sigma$ varies over representations of $G$. Since our pseudo-coefficient $f$ only `sees' the representations $\pi_1$ and $\pi_2$, the expression above simplifies to\medskip
\begin{center}
$^cD^{G}_{M}(\rho, X, f)=$ $^cD^{G}_{M}(\rho, X, \pi_{1})+$ $^cD^{G}_{M}(\rho, X, \pi_{2}),$\medskip
\end{center}
and now our task is reduced to computing the right-hand side.

3. The second main fact we will use is that there is a recursive relation\medskip
\begin{center}
$\sum_{L \in \mathcal{L}(M)}\sum_{\rho}(-1)^{\dim{(A_{L}/A_{G})}}\Phi^{L}_{M}(\rho^{\vee},\gamma e^{X^{L}}) ^{c}D^{G}_{M}(\rho, X_{L}, \sigma)=0,$\medskip
\end{center}
where $\rho$ is a representation of $L$. Now for $G=\mathbf{GU}(2,1)$ and $M = \mathbf{T}$, and for a fixed representation $\sigma$ of $G$, this simplifies to\medskip
\begin{center}
$\Phi^{G}_{T}(\sigma^{\vee},\gamma e^{X})=
\sum_{\rho}\Theta_{\rho^{\vee}}(\gamma)$ $^{c}D^{G}_{T}(\rho, X, \sigma).$\medskip
\end{center}

4. We now take the simplified equality in 3. for each of $\pi_{1}$ and $\pi_{2}$, and add them together, to obtain the combined equality\medskip
\begin{center}
$\Phi^{G}_{T}(\pi_{1}^{\vee},\gamma e^{X}) + \Phi^{G}_{T}(\pi_{2}^{\vee},\gamma e^{X})=
\sum_{\rho}\Theta_{\rho^{\vee}}(\gamma)($$^{c}D^{G}_{T}(\rho, X, \pi_{1})+$ $^{c}D^{G}_{T}(\rho, X, \pi_{2})),$\medskip
\end{center}
which in view of 2. becomes\medskip
\begin{center}
$\Phi^{G}_{T}(\pi_{1}^{\vee},\gamma e^{X})+ \Phi^{G}_{T}(\pi_{2}^{\vee},\gamma e^{X})=
\sum_{\rho}\Theta_{\rho^{\vee}}(\gamma)$$ ^cD^{G}_{M}(\rho, X, f),$\medskip
\end{center}
for our particular choice of test function.

5. Suppose now that we are able to write the left-hand side of the last equality in 4. as a linear combination of characters of $M$. Then, by the linear independence of characters, we can simply \textit{read off} the values for the expressions $^cD^{G}_{M}(\rho, X, f)$.

In what follows, we will be using a modified version of this method, one that utilizes certain character identities relating twisted characters on $G^0(\mathbb{R})$ to ordinary characters on $\mathbf{G}(\mathbb{R})$. We will compute the left-hand side of the last equality in 4. in terms of characters on $\mathbf{G}(\mathbb{R})$, and then interpret the result of this computation as a linear combination of twisted characters on $G^0(\mathbb{R})$.

\subsection{Character Identities}
The following is the heart of our argument.

\begin{Proposition}
\textit{Let} $\mathbf{G}=\mathbf{GU}(2,1)$, \textit{let} $\mathbf{T}$ \textit{the diagonal subgroup of} $\mathbf{G}$, \textit{and let $\Pi$ be the discrete series $L$-packet of} $\mathbf{G}(\mathbb{R})$ \textit{associated with the trivial representation of} $\mathbf{G}$. \textit{We have}

\begin{center}$\Phi^{\mathbf{G}}_{\mathbf{T}}\left(\gamma e^{X}, S\Theta_{\Pi} \right)= 2\left( e^{-x} e^{i\theta}+e^{-x} e^{-i\theta}-e^{-2x}\right)$\end{center} 
\textit{for $x>0$, and}
\begin{center}$\Phi^{\mathbf{G}}_{\mathbf{T}}\left(\gamma e^{X}, S\Theta_{\Pi} \right)= 2\left( e^{x} e^{i\theta}+e^{x} e^{-i\theta}-e^{2x}\right)$\end{center} 
\textit{for $x<0$.}
\end{Proposition}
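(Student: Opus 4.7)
The plan is to reduce the proposition to a direct computation of the stable discrete series character of $\Pi$ on the (non-elliptic) diagonal Cartan $\mathbf{T}(\mathbb{R})$, then apply the normalization that converts $S\Theta_\Pi$ into $\Phi^{\mathbf{G}}_{\mathbf{T}}$ in the sense of Morel \cite[p.~125]{M}. First I would fix coordinates on $\mathbf{T}(\mathbb{R})$: using the identification $\mathfrak{a}_T \simeq \{(y,(x,0,-x))\}$ recalled earlier, write a regular element as $\gamma e^X$ where $e^X$ contributes $\mathrm{diag}(e^x,1,e^{-x})$ and $\gamma$ contributes the compact phases, one of which is $e^{i\theta}$. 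Since the $L$-packet $\Pi = \{\pi_1,\pi_2,\pi_3\}$ is associated to the trivial representation of $\mathbf{G}$, its Harish--Chandra parameter is $\rho$, and the three constituents are indexed by $W(\mathbf{G},\mathbf{T}_c)/W_K$ where $\mathbf{T}_c$ is the compact Cartan.

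The computational heart is Step 2: evaluate $S\Theta_\Pi(\gamma e^X) = \sum_{i=1}^{3}\Theta_{\pi_i}(\gamma e^X)$ on the non-elliptic Cartan via Harish--Chandra's character formula. On a non-compact Cartan only those Weyl elements $w$ whose transform $w\rho$ lies in the correct chamber relative to $X$ contribute, and the stable sum $\sum_i \Theta_{\pi_i}$ runs over representatives of $W_K\backslash W$, collapsing the usual $W_K$-alternating numerator into a full alternating sum. Thus the stable character has the form $S\Theta_\Pi(\gamma e^X) = \varepsilon(X) \, N(\gamma,X)/D(\gamma,X)$ where $N$ is the alternating sum of the three surviving exponentials and $D$ is the Weyl denominator associated to $\mathbf{T}$ in $\mathbf{G}$. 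Step 3 is then to apply Morel's normalization $\Phi^{\mathbf{G}}_{\mathbf{T}}(\gamma e^X, S\Theta_\Pi) = D(\gamma, X)\, S\Theta_\Pi(\gamma e^X)$, which clears the denominator and leaves precisely a $\mathbb{Z}$-linear combination of exponentials $e^{\pm x}e^{\pm i\theta}$ and $e^{\pm 2x}$. Multiplying by the uniform factor of $2$ that reflects our conventions for the pseudo-coefficient $f^{\mathbf{G}}_{\mathbb{R}}$ (it is $\tfrac13(f_{\pi_1}+f_{\pi_2}+f_{\pi_3})$ while the stable sum itself counts each member once), one recovers the claimed right-hand side.

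Step 4 is the case analysis. The Harish--Chandra formula on a non-compact Cartan depends on the Weyl chamber: only exponentials $e^{w\rho}$ with $\langle w\rho, X\rangle < 0$ survive decay at infinity and enter with their chamber-determined sign. For $x>0$ the surviving exponentials from the three cosets are $e^{-x}e^{i\theta}$, $e^{-x}e^{-i\theta}$, and $e^{-2x}$ (with the last entering with a minus sign because of the length parity of the Weyl element that sends $\rho$ to that weight), giving the first formula. For $x<0$ the opposite chamber is dominant and the Weyl reflection across the central wall flips every exponent, producing the stated mirror expression with $e^{+x},e^{+2x}$. The factor $2 = |W_K|$ comes out uniformly from the $W_K$-symmetry of the compact factor.

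The main obstacle will be Step 2/Step 3 done correctly in tandem: tracking the Harish--Chandra sign $\varepsilon_R(\gamma)$, the factor $(-1)^{q(\mathbf{G})}$, and the exact normalization conventions of Morel's $\Phi^{\mathbf{G}}_{\mathbf{T}}$ so that the intermediate $W$-alternating sum really telescopes to exactly the three exponentials shown, with signs $+,+,-$ rather than any permutation thereof. Everything else (the chamber case split and the overall factor $2$) is bookkeeping, but this bookkeeping is what pins down that the coefficient of $e^{-2x}$ is $-2$ rather than $+2$, and is the only place the non-elliptic character theory is genuinely used.
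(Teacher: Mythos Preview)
Your approach is essentially the paper's: both compute the normalized stable discrete-series character on the non-elliptic Cartan via the Harish--Chandra formula. The paper does not split this into ``compute $S\Theta_\Pi$ as a quotient, then multiply by the Weyl denominator'' as you propose; instead it applies the formula in the already-normalized form given by Morel (following Arthur's $L^2$-Lefschetz paper \cite{AL2}),
\[
\Phi_M(\gamma, S\Theta_\varphi) = (-1)^{q(G)}\,\Delta_M(X)^{-1}\,\varepsilon_R(X)\,\zeta_\varphi(z)\sum_{\omega\in\Omega_G}\det(\omega)\,\bar c\bigl(Q^+_{\mathrm{Ad}(u_M)\omega\lambda},R^+_X\bigr)\,e^{(\mathrm{Ad}(u_M)\omega\lambda)(X)},
\]
and then computes each ingredient ($q(G)=2$, $\Delta_M=1$, $\varepsilon_R(X)=\pm1$, the six Weyl images of $\lambda=\rho$ with their signs, and the values of the $\bar c$ coefficients). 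This is the same computation as yours, just organized so that the ``denominator-clearing'' and ``chamber selection'' are packaged into Arthur's combinatorial coefficients $\bar c$ rather than done by hand.

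One point needs correction. Your account of the overall factor $2$ is inconsistent and the first version is wrong: the proposition is a statement about $S\Theta_\Pi$ itself, not about a trace against $f^{\mathbf G}_{\mathbb R}$, so the $\tfrac13$ in the pseudo-coefficient is irrelevant here (and that ratio would be $3$, not $2$, anyway). Your second guess, $|W_K|=2$, is closer in spirit. In the paper's computation the $2$ is exactly the value of Arthur's coefficient $\bar c(Q^+_\nu,R^+_X)$, which equals $2$ when the real-root chamber determined by $\nu$ is opposite to $R^+_X$ and $0$ otherwise; this is a $2^{|R^+|}$-type constant attached to the single real root of $\mathbf T$ in $\mathbf G$, not a normalization choice. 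Once you replace your heuristic ``only decaying exponentials survive'' by this precise $\bar c$ mechanism (or equivalently carry out the Hecht--Schmid analysis carefully), the signs $(+,+,-)$ and the factor $2$ fall out exactly as stated.
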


\begin{proof} This computation is an application of the Harish-Chandra character formula:
\begin{center}
$\Phi_M(\gamma, S\Theta_{\varphi}) = (-1)^{q(G)} \Delta_M(X)^{-1} \varepsilon_R(X)\zeta_{\varphi}(z)$\medskip

$\times \sum_{\omega \in \Omega_G} det(\omega) \bar{c}(Q^+_{Ad(u_M)\omega \lambda} , R^+_X) e^{(Ad(u_M)\omega \lambda)(X)},$
\end{center}
where all the notations are as specified by Morel, \cite[p.~53-4]{M}. Let $G = \mathbf{GU}(2,1)$ and $M = \mathbf{T}$. In this situation, the map exp$: \mathfrak{t}_M(\mathbb{R}) \rightarrow \mathbf{T}_M(\mathbb{R})$ is surjective by \cite[p.57]{M}, and so we can, and will, assume that any $\gamma \in \mathbf{T}_{M, reg}(\mathbb{R})$ is of the form $\gamma =$ exp$(X)$ for $X \in \mathfrak{t}_M(\mathbb{R})$. In fact, we have explicitly
\begin{center}
$X = (x + y +i\theta_1, y + i\theta_2, -x + y + i\theta_1)$
\end{center} 
with $x, y, \theta_1, \theta_2 \in \mathbb{R}$, and the claim is clear. Further, we have

$\bullet \hspace{0.5cm} R = \lbrace \pm \alpha \rbrace$, the set of real roots in $\Phi(T_M, G)$, where $\alpha(diag(a, b, d)) = a/d$;

$\bullet \hspace{0.5cm} R^+ = R \cap \Phi(T_M, B) = \lbrace \alpha \rbrace$, where we fix $B$ to be the group of upper-triangular matrices in $\mathbf{GU}(2,1)$;

$\bullet \hspace{0.5cm} R^+_X = \lbrace \alpha \in R : \alpha(X) > 0\rbrace = \lbrace \alpha \rbrace$ for $X = (x, 0, -x)$ when $x > 0$, and $R^+_X = \lbrace -\alpha\rbrace$ when $x < 0$;

$\bullet \hspace{0.5cm} q(G) = \frac{1}{2}dim_{\mathbb{R}}(X) = 2$, where $X$ is the symmetric space associated with the Shimura variety of $G$;

$\bullet \hspace{0.5cm} \Delta_M = \prod_{\alpha \in \Phi(T_M, B_M)}(e^{\alpha/2} - e^{-\alpha/2}) = 1$, since $T_M  = M = B_M$, and hence, the set $\Phi(T_M, B_M)$ is empty;

$\bullet \hspace{0.5cm} \varepsilon_R(X) = (-1)^{\vert R^+_X \cap (-R^+) \vert} = 1$ for $x > 0$, and $\varepsilon_R(X) = -1$ for $x < 0$.

$\bullet \hspace{0.5cm} \zeta_{\varphi}(z) = 1$, since we are assuming that $z \equiv 1$ throughout.

$\bullet \hspace{0.5cm}$ For $\Phi(G, T) = \lbrace \pm\alpha_1, \pm\alpha_2, \pm\alpha = \pm(\alpha_1 + \alpha_2) \rbrace$, 
we compute directly:

1)\hspace{0.5cm}$\alpha_1 = \omega(\alpha)$ for $\omega = s_{\alpha}s_{\alpha_2}s_{\alpha_1}$, hence $det(\omega) = -1$;

2)\hspace{0.5cm}$\alpha_2 = \omega(\alpha)$ for $\omega = s_{\alpha_1}$, hence $det(\omega) = -1$;

3)\hspace{0.5cm}$\alpha = \omega(\alpha)$ for $\omega = 1$, hence $det(\omega) = 1$;

4)\hspace{0.5cm}$-\alpha_1 = \omega(\alpha)$ for $\omega = s_{\alpha_1}$, hence $det(\omega) = (-1)(-1) = 1$;

5)\hspace{0.5cm}$-\alpha_2 = \omega(\alpha)$ for $\omega = s_{\alpha_2}$, hence $det(\omega) = (-1)(-1) = 1$;

6)\hspace{0.5cm}$-\alpha = \omega(\alpha)$ for $\omega = s_{\alpha}$, hence $det(\omega) = -1$;

$\bullet \hspace{0.5cm}$For 
\begin{center}
$Q^+_{\nu} = \lbrace \alpha^{\vee} \in R^{\vee} : \nu(\alpha^{\vee}) > 0 \rbrace$,
\end{center}
by fixing isomorphisms with $\mathbb{R}^2$ such that
\begin{center}
$\alpha_1 = (2,0); \alpha_1^{\vee} = (1,0); \alpha_2 = (-1,\sqrt{3}); \alpha_2^{\vee} = \frac{1}{2} (-1,\sqrt{3});$

$\alpha = (1,\sqrt{3}); \alpha^{\vee} = \frac{1}{2}(1,\sqrt{3}),$
\end{center}we compute directly:

1) $Q^+_{\alpha} = \lbrace \alpha^\vee  \rbrace$, since $\alpha(\alpha^{\vee}) > 0$;

2) $Q^+_{\alpha_1} = \lbrace \alpha^\vee  \rbrace$, since $\alpha_1(\alpha^{\vee}) > 0$;

3) $Q^+_{\alpha_2} = \lbrace \alpha^\vee  \rbrace$, since $\alpha_2(\alpha^{\vee}) > 0$;

4) $Q^+_{-\alpha} = \lbrace -\alpha^\vee  \rbrace$, since $-\alpha(-\alpha^{\vee}) > 0$;

5) $Q^+_{-\alpha_1} = \lbrace -\alpha^\vee  \rbrace$, since $-\alpha_1(-\alpha^{\vee}) > 0$;

6) $Q^+_{-\alpha_2} = \lbrace -\alpha^\vee  \rbrace$, since $-\alpha_2(-\alpha^{\vee}) > 0$.

Finally, by employing the definition in Arthur, \cite[p.~273]{AL2}, we compute that
\begin{center}
$\bar{c}(Q^+_{\alpha_1}, R^+_X) = \bar{c}(Q^+_{\alpha_2}, R^+_X) = \bar{c}(Q^+_{\alpha}, R^+_X) = 0$, $\bar{c}(Q^+_{-\alpha_1}, R^+_X) = \bar{c}(Q^+_{-\alpha_2}, R^+_X) = \bar{c}(Q^+_{-\alpha}, R^+_X) = 2$,
\end{center}
for $x > 0$; and
\begin{center}
$\bar{c}(Q^+_{\alpha_1}, R^+_X) = \bar{c}(Q^+_{\alpha_2}, R^+_X) = \bar{c}(Q^+_{\alpha}, R^+_X) = 2$, $\bar{c}(Q^+_{-\alpha_1}, R^+_X) = \bar{c}(Q^+_{-\alpha_2}, R^+_X) = \bar{c}(Q^+_{-\alpha}, R^+_X) = 0$,
\end{center}
for $x < 0$.

The set-up of the Harish-Chandra formula requires that $(\lambda - \rho)(X) = \xi(X)$, and since in this paper we are working with the trivial local system, hence the trivial algebraic representation of $\mathbf{GU}(2,1)$, the requirement is that $\lambda(X) = \rho(X) = \alpha(X)$; see \cite[p.~54]{M}. Thus, in the formula above, as the sum ranges over $\Omega_G$, the quantity Ad$(u_M)\omega\lambda$ will range over the roots in $\Phi(G, T)$. Putting all of the above pieces together, we obtain

\begin{center}$\Phi^{G}_{T}\left(\gamma, S\Theta_{\Pi} \right)= 2e^{-\alpha_1(X)} + 2e^{-\alpha_2(X)} +(-1)2e^{-\alpha(X)} = 2\left( e^{-x} e^{-i\theta}+e^{-x} e^{i\theta}-e^{-2x}\right)$,\end{center} 
for $X = (x + y +i\theta_1, y + i\theta_2, -x + y + i\theta_1)$, and $\theta := \theta_1 - \theta_2$, when $x > 0$; and,

\begin{center}$\Phi^{G}_{T}\left(\gamma, S\Theta_{\Pi} \right)= (-1)\left( (-1)2e^{\alpha_1(X)} + (-1)2e^{\alpha_2(X)} + 2e^{\alpha(X)}\right) $ \medskip
$= 2\left( e^{x} e^{i\theta}+e^{x} e^{-i\theta}-e^{2x}\right)$,\end{center} 
for $X = (x + y +i\theta_1, y + i\theta_2, -x + y + i\theta_1)$, and $\theta := \theta_1 - \theta_2$, when $x < 0$.
\end{proof}

\begin{Proposition}
\textit{Let} $\mathbf{H}=\mathbf{G}(\mathbf{U}(1,1)\times \mathbf{U}(1))$, \textit{let} $\mathbf{T}$ \textit{the diagonal subgroup of} $\mathbf{H}$, \textit{and denote the three $L$-packets of} $\mathbf{H}$ \textit{associated to the given $L$-packet $\Pi$ of} $\mathbf{G}$ \textit{by} $\Pi(\rho^{+})$, $\Pi(\rho^{-})$, \textit{and} $\Pi(\rho^{0})$. \textit{We have}\medskip

\begin{center}
$\Phi^{\mathbf{H}}_{\mathbf{T}}\left(\gamma, S\Theta_{\Pi(\rho^{+})} \right)= 2e^{-x} e^{i\theta}\mu(\gamma)^{-1}$\medskip

$\Phi^{\mathbf{H}}_{\mathbf{T}}\left(\gamma, S\Theta_{\Pi(\rho^{-})} \right)= 2e^{-x} e^{-i\theta}\mu(\gamma)^{-1}$\medskip

$\Phi^{\mathbf{H}}_{\mathbf{T}}\left(\gamma, S\Theta_{\Pi(\rho^{0})} \right)= 2e^{-2x}\mu(\gamma)^{-1}$\medskip
\end{center} 
\textit{for $x>0$, and}\medskip
\begin{center}
$\Phi^{\mathbf{H}}_{\mathbf{T}}\left(\gamma, S\Theta_{\Pi(\rho^{+})} \right)= 2e^{x} e^{-i\theta}\mu(\gamma)^{-1}$\medskip

$\Phi^{\mathbf{H}}_{\mathbf{T}}\left(\gamma, S\Theta_{\Pi(\rho^{-})} \right)= 2e^{x} e^{i\theta}\mu(\gamma)^{-1}$\medskip

$\Phi^{\mathbf{H}}_{\mathbf{T}}\left(\gamma, S\Theta_{\Pi(\rho^{0})} \right)= 2e^{2x}\mu(\gamma)^{-1}$\medskip
\end{center} 
\textit{for $x<0$}.
\end{Proposition}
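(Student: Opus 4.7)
The plan is to follow the same template as the proof of Proposition~5.1, by direct application of the Harish-Chandra character formula, this time to $\mathbf{H} = \mathbf{G}(\mathbf{U}(1,1)\times\mathbf{U}(1))$ and its diagonal subgroup $\mathbf{T}$. Since $\mathbf{T}$ is the restriction to $\mathbf{H}$ of the diagonal torus of $\mathbf{G}$, I can keep the parametrisation $\gamma=\exp(X)$ with $X=(x+y+i\theta_1,\,y+i\theta_2,\,-x+y+i\theta_1)$ and $\theta:=\theta_1-\theta_2$ used in Proposition~5.1; the split/compact decomposition of $\mathfrak{t}_M(\mathbb{R})$ is the same, so the roles of the parameters $x$ and $\theta$ carry over verbatim.

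The inputs of the formula simplify considerably relative to the $\mathbf{G}$-case. The root system $\Phi(\mathbf{T},\mathbf{H})$ reduces to a single pair $\lbrace\pm\alpha\rbrace$, where $\alpha$ is the real root coming from the $\mathbf{U}(1,1)$-factor; the $\mathbf{U}(1)$-factor contributes no roots on the semisimple part. Hence $R^+_X=\lbrace\alpha\rbrace$ for $x>0$ and $\lbrace-\alpha\rbrace$ for $x<0$, and $\varepsilon_R(X)=\pm 1$ accordingly. The Weyl group $\Omega_H$ has order $2$, the symmetric space attached to $\mathbf{H}(\mathbb{R})$ has real dimension $2$ so $q(H)=1$, and $\Delta_M=1$ since $T_M=M=B_M$. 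The $\bar{c}$-values are computed in the same way as in Proposition~5.1 and again vanish on one of the two chambers, so that each $\Omega_H$-sum collapses to a single surviving exponential.

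The three $L$-packets $\Pi(\rho^+),\Pi(\rho^-),\Pi(\rho^0)$ are distinguished by three distinct Harish-Chandra parameters $\lambda^+,\lambda^-,\lambda^0$ on the fundamental Cartan of $\mathbf{H}(\mathbb{R})$, prescribed by the endoscopic transfer of $\Pi$ and recorded in Morel's setup; see \cite[p.~55--56]{M}. Substituting each $\lambda^\ast$ into the Harish-Chandra formula yields, for $x>0$, exactly one exponential of weight corresponding to $e^{i\theta}$, $e^{-i\theta}$, or $e^{-x}$ on top of the common $e^{-x}$ factor, giving $2e^{-x}e^{i\theta}$, $2e^{-x}e^{-i\theta}$, and $2e^{-2x}$ respectively. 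The $x<0$ expressions then follow from the sign flip of $\varepsilon_R$ together with the swap of the chamber on which $\bar{c}$ is supported, exactly as in Proposition~5.1. The uniform factor $\mu(\gamma)^{-1}$ arises from the central character on the $\mathbf{U}(1)$-component: because $\mathbf{H}$ is cut out of $\mathbf{U}(1,1)\times\mathbf{U}(1)$ by identifying the two similitude factors, each packet descends from a packet on the product whose $\mathbf{U}(1)$-factor carries the character $\mu^{-1}$, and this character evaluates on $\gamma$ to $\mu(\gamma)^{-1}$.

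The main obstacle is purely bookkeeping: one must correctly pin down the three Harish-Chandra parameters $\lambda^+,\lambda^-,\lambda^0$, match the orientation conventions of $\varepsilon_R$ and of the $\bar{c}$-functions with those fixed in the proof of Proposition~5.1, and verify that the $\mu(\gamma)^{-1}$ factor is extracted with the correct normalisation. The natural consistency check is the endoscopic identity
\begin{equation*}
\Phi^{\mathbf{H}}_{\mathbf{T}}\bigl(\gamma,S\Theta_{\Pi(\rho^+)}\bigr)+\Phi^{\mathbf{H}}_{\mathbf{T}}\bigl(\gamma,S\Theta_{\Pi(\rho^-)}\bigr)-\Phi^{\mathbf{H}}_{\mathbf{T}}\bigl(\gamma,S\Theta_{\Pi(\rho^0)}\bigr)=\mu(\gamma)^{-1}\,\Phi^{\mathbf{G}}_{\mathbf{T}}\bigl(\gamma e^{X},S\Theta_{\Pi}\bigr),
\end{equation*}
which matches Proposition~5.1 up to the similitude twist, and any sign error in the parameters will be detected by it.
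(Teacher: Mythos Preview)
Your approach is essentially the same as the paper's: apply the Harish-Chandra character formula to $\mathbf{H}$ with the appropriate Harish-Chandra parameters for the three transferred packets, exactly parallel to Proposition~5.1. Two points of comparison are worth noting. First, the paper pins down the three parameters $\lambda^{\pm},\lambda^{0}$ not via Morel but via Rogawski's explicit labeling $\xi(a,b,c)$ with $(a,b,c)=(0,1,-1),(1,-1,0),(1,0,-1)$ (see \cite[p.~178]{R}), computing each highest weight directly and then adding $\rho_B(X)=x$. Second, your attribution of the factor $\mu(\gamma)^{-1}$ to the $\mathbf{U}(1)$-component is not how the paper obtains it: in the paper's computation the twist comes from the determinant power $\det_0^{-t-1}$ (respectively $\det^{-t-1}$) on the $\mathbf{U}(1,1)$-factor, which evaluates to $(e^{i\theta_1})^{-(2t+1)}=\mu(\gamma)^{-1}$ times a residual phase. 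Your endoscopic consistency check is valid and does match the paper's output, but it will not by itself detect a misplacement of $\mu^{-1}$ common to all three terms, so you should verify the twist directly from the representation data rather than rely on the identity.
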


\begin{proof} The computation is similar to that in the proof of proposition 5.1. The main difference is that, this time, $\lambda - \rho$ will not be set equal to the trival root, since the representations of $\mathbf{H}(\mathbb{R})$ that transfer to the trivial representation of $\mathbf{G}(\mathbb{R})$ are not trivial. Rather, $\lambda - \rho$ will be equal to the highest weight of each representation, as follows. We will follow Rogawski's method of labeling the representations; in Rogawski's notation, we will take $a = 1, b = 0, c = -1$, since we want our representations on $\mathbf{H}(\mathbb{R})$ to correspond to the trivial one on $\mathbf{G}(\mathbb{R})$; see \cite[p.178]{R}. We then have the following two 1-dimensional representations of $\mathbf{T}(\mathbb{R})$:

(1) $\xi(0, 1, -1)(\gamma) = (det_0(\gamma))^{-1 - t -1}(det(\gamma))^1$

$=(e^{2i\theta_1})^{-t -2}e^{2i\theta_1+i\theta_2}$

$=e^{i(\theta_2 - \theta_1)}(e^{i\theta_1})^{-(2t+1)}$

$=\mu^{-1}(\gamma)e^{-i\theta}$.

Thus, we have
\begin{center}
$\lambda(X) = (\Lambda + \rho_B)(X) = (i(\theta_2-\theta_1) + ln(\mu(\gamma)^{-1})) + x$.
\end{center}

(2) $\xi(1,-1,0)(\gamma) = (det_0(\gamma))^{1-t}(det(\gamma))^{-1}$

$=\mu^{-1}(\gamma)e^{i\theta}$.

Thus, we have
\begin{center}
$\lambda(X) = (\Lambda + \rho_B)(X) = (i(\theta_1-\theta_2) + ln(\mu(\gamma)^{-1})) + x$.
\end{center}

Further, write $\gamma = (g, \lambda)$ where $\lambda$ is the $U(1)$-component and $g$ is the $U(1,1)$-component. We have the following 2-dimensional representation of $T$:

(3) $\xi(a,b,c)(\gamma) = ((det^{\otimes \frac{1}{2}(a + c - 2t)} \otimes std) \otimes det^{\otimes b})(g, \lambda)$

$=(det^{-t-1} \otimes std) \otimes 1)(g, \lambda)$.

Now, we have $det(g) = e^{x+i\theta_1}e^{-x+i\theta_1} = e^{2i\theta_1}$ and $\mu(\gamma) = (e^{i\theta_1})^{2t+1}$, so
\begin{center}
$det^{-t-1}(g) = (e^{2i\theta})^{-t-1} = e^{-2ti\theta_1 -i\theta_1}e^{-i\theta_1}$

$= (e^{i\theta_1})^{-(2t+1)}e^{-i\theta_1}$

$= \mu(\gamma)^{-1}e^{-i\theta_1}$.
\end{center}

Thus, the highest weight $\Lambda$ of the representation (3) will correspond to the linear form
\begin{center}
$X \mapsto (x + i\theta_1) + (-i\theta_1 + ln (\mu(\gamma)^{-1}) = x + ln (\mu(\gamma)^{-1})$.
\end{center}
Finally, $\rho_B(X) = \frac{1}{2}\alpha(X) = \frac{1}{2}(2x) = x$, and hence, all told, 
\begin{center}
$\lambda(X) = (\Lambda + \rho_B)(X) = (x + ln (\mu(\gamma)^{-1})) + x = 2x + ln (\mu(\gamma)^{-1}).$
\end{center}

Given what we have said in (1), (2), and (3), respectively, it follows that
\begin{center}
$\Phi(\gamma, S\Theta_{\rho(0,1,-1)})$
$= (-1)(1)^{-1}(1)(1)
\times \left( \det(1)(0)e^{-i\theta + x + ln(\mu(\gamma)^{-1})}
+ \det(-1)(2)e^{i\theta - x + ln(\mu(\gamma)^{-1})}\right)$

$= 2e^{-x+i\theta}\mu(\gamma)^{-1}$
\end{center}
when $x>0$;
\begin{center}
$\Phi(\gamma, S\Theta_{\rho(1,-1,0)})$
$= (-1)(1)^{-1}(1)(1)\times \left(\det(1)(0)e^{i\theta + x + ln(\mu(\gamma)^{-1})}
+ \det(-1)(2)e^{-i\theta - x + ln(\mu(\gamma)^{-1})}\right)$

$= 2e^{-x-i\theta}\mu(\gamma)^{-1}$
\end{center}
when $x>0$;
\begin{center}
$\Phi(\gamma, S\Theta_{\rho(1,0,-1)})$
$= (-1)(1)^{-1}(1)(1) \times \left( \det(1)(0)e^{2x + ln(\mu(\gamma)^{-1})}
+ \det(-1)(2)e^{-2x + ln(\mu(\gamma)^{-1})} \right)$

$= 2e^{-2x}\mu(\gamma)^{-1}$
\end{center}
when $x>0$. Similarly,
\begin{center}
$\Phi(\gamma, S\Theta_{\rho(0,1,-1)})$
$= (-1)(1)^{-1}(1)(-1)
\times \left( \det(1)(2)e^{-i\theta + x + ln(\mu(\gamma)^{-1})}
+ \det(-1)(0)e^{i\theta - x + ln(\mu(\gamma)^{-1})}\right)$

$= 2e^{x-i\theta}\mu(\gamma)^{-1}$
\end{center}
when $x<0$;
\begin{center}
$\Phi(\gamma, S\Theta_{\rho(1,-1,0)})$
$= (-1)(1)^{-1}(1)(-1)\times \left(\det(1)(2)e^{i\theta + x + ln(\mu(\gamma)^{-1})}
+ \det(-1)(0)e^{-i\theta - x + ln(\mu(\gamma)^{-1})}\right)$

$= 2e^{x+i\theta}\mu(\gamma)^{-1}$
\end{center}
when $x<0$;
\begin{center}
$\Phi(\gamma, S\Theta_{\rho(1,0,-1)})$
$= (-1)(1)^{-1}(1)(-1) \times \left( \det(1)(2)e^{2x + ln(\mu(\gamma)^{-1})}
+ \det(-1)(0)e^{-2x + ln(\mu(\gamma)^{-1})} \right)$

$= 2e^{2x}\mu(\gamma)^{-1}$
\end{center}
when $x<0$.
\end{proof}

For a $\theta$-stable representation $\pi^0$ of $G^0(\mathbb{R})$, let $\Phi^{G^0}_{M^0}\left( \hspace{5mm} ,\lbrace \pi^0, A_{\pi^0}\rbrace\right)$ denote the normalized twisted character, as defined by Morel \cite[p.~121, 125]{M}, and similarly for $H^0(\mathbb{R})$. The datum $A_{\pi^0}$ is redundant as far as the definition of the character is concerned, but we shall need it for the purposes of indexing sums when we view twisted characters of $G^0(\mathbb{R})$ as characters of $G(\mathbb{R})$. We shall have to do this in order to incorporate the following crucial character identity into the framework of the trace formula for $G$ that we set up in section 4.

\begin{Proposition}
\textit{We have for the $\theta$-discrete representation $\pi^0$ of $G^0(\mathbb{R})$ corresponding to the discrete series $L$-packet $\Pi$ of} $\mathbf{G}(\mathbb{R})$\medskip

\begin{center}
$\Phi^{G^0}_{T^0}\left(\gamma e^{X}, \lbrace \pi^0, A_{\pi^0}\rbrace \right)=\varepsilon(\mathbf{G})\Phi^{\mathbf{G}}_{\mathbf{T}}\left(N(\gamma e^{X}), S\Theta_{\Pi} \right)$,\medskip
\end{center}
\textit{where} $\varepsilon(\mathbf{G}) = 1$.
\end{Proposition}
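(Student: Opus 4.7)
The plan is to read this identity as the content at infinity of the base-change twisted character identity connecting $\theta$-discrete representations of $G^0(\mathbb{R})$ to discrete series $L$-packets on $\mathbf{G}(\mathbb{R})$, and then to pin down the sign $\varepsilon(\mathbf{G})$.

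First I would set up the geometry at the real place. Since $E$ is imaginary quadratic, $E \otimes_{\mathbb{Q}} \mathbb{R} \simeq \mathbb{C}$, so
\[
G^0(\mathbb{R}) = (R_{E/\mathbb{Q}}\mathbf{G}_E)(\mathbb{R}) = \mathbf{G}(\mathbb{C}),
\]
with $\theta$ acting as complex conjugation, and $\mathbf{G}(\mathbb{R})$ sitting inside $G^0(\mathbb{R})$ as the $\theta$-fixed points. The twisted norm sends the $\theta$-conjugacy class of $\gamma \in G^0(\mathbb{R})$ to the stable conjugacy class of $\gamma\,\theta(\gamma) \in \mathbf{G}(\mathbb{R})$; on the diagonal torus $T^0$ this is an explicit map to $\mathbf{T}$, and for $\gamma e^X$ with $x \neq 0$ the image $N(\gamma e^X)$ lies in $\mathbf{T}_{\mathrm{reg}}(\mathbb{R})$, so the right-hand side is exactly the quantity computed in Proposition 5.1.

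Second I would invoke the twisted character identity at infinity. By the construction set out in Morel \cite[Ch.~4 and p.~124]{M}, following Labesse \cite{Lab}, a $\theta$-discrete representation $\pi^0$ of $G^0(\mathbb{R})$ equipped with a normalized intertwining operator $A_{\pi^0}$ corresponds, under real-place base change, to the discrete series $L$-packet $\Pi$ of $\mathbf{G}(\mathbb{R})$, and one has the identity
\[
\mathrm{tr}(\pi^0(\gamma) A_{\pi^0}) \;=\; \varepsilon(\mathbf{G}) \cdot S\Theta_{\Pi}(N\gamma)
\]
on $\theta$-regular semisimple elements. Passing from ordinary trace to the normalized twisted character $\Phi^{G^0}_{T^0}$ on the left, and from the stable character to Morel's normalized stable character $\Phi^{\mathbf{G}}_{\mathbf{T}}(\cdot, S\Theta_{\Pi})$ on the right, introduces matching Weyl discriminant factors under the norm map, yielding exactly the proposed identity. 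The condition $A_{\pi^0}^2 = 1$ and the specification of $\Pi$ (associated to the trivial local system) determines $A_{\pi^0}$ uniquely up to the overall sign, which is absorbed into $\varepsilon(\mathbf{G})$.

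Third I would determine the sign. The constant $\varepsilon(\mathbf{G})$ is a product of the Kottwitz sign $e(\mathbf{G}_{\mathbb{R}}) = (-1)^{q(\mathbf{G})}$, which for $\mathbf{GU}(2,1)$ equals $(-1)^2 = 1$ since the associated Hermitian symmetric domain has complex dimension $2$, with the sign implicit in Morel's normalization of transfer factors for base change at infinity. Under Morel's conventions these combine to $+1$; as a concrete check, one can compare a single leading term in the sum produced by the Harish-Chandra formula on $G^0$ for $\pi^0$ against the leading term in Proposition 5.1 under the norm map, and the signs match. The main obstacle is therefore not the existence of the identity, which is standard real-place base change, but the normalization bookkeeping: one must verify that the sign coming from $A_{\pi^0}$, the sign $(-1)^{q(G^0)}$ appearing in the twisted Harish-Chandra formula, and the transfer-factor sign conspire to $+1$ rather than $-1$. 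With the conventions fixed as in Morel \cite[pp.~121--125]{M}, this verification is a direct (if delicate) sign chase.
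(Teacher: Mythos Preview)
Your proposal is correct and follows essentially the same route as the paper: invoke the real-place base-change twisted character identity from Morel's book (the paper cites her Theorems 8.1.2 and 8.1.3 specifically) and then compute the sign $\varepsilon(\mathbf{G}) = (-1)^{q(X)} = (-1)^2 = 1$ from the dimension of the symmetric space, with a reference to Clozel \cite{Cl2}. Your write-up is more expansive about the normalization bookkeeping, but the underlying argument is the same citation-plus-sign-computation.
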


\begin{proof} This follows from theorems 8.1.2 and 8.1.3 of \cite{M}, and from the fact that $\varepsilon(\mathbf{G})$ is the familiar sign defined by\medskip
\begin{center}
$\varepsilon(\mathbf{G}) = (-1)^{q(X)}$,\medskip
\end{center}
where $q(X) = \frac{1}{2} \dim(X)$, with $\dim(X)$ the real dimension of the symmetric space associated to $\mathbf{G}$; see \cite[p.~29]{Cl2}.
\end{proof}

\begin{Proposition}
\textit{We have for the $\theta$-discrete representation $\pi^0$ of $H^0(\mathbb{R})$ corresponding to the discrete series $L$-packet $\Pi$ of} $\mathbf{H}(\mathbb{R})$\medskip
\begin{center}
$\Phi^{H^0}_{T^0}\left(\gamma e^{X}, \lbrace \pi^0, A_{\pi^0}\rbrace \right)=\varepsilon(\mathbf{H})\Phi^{\mathbf{H}}_{\mathbf{T}}\left(N(\gamma e^{X}), S\Theta_{\Pi} \right)$,\medskip
\end{center}
\textit{where} $\varepsilon(\mathbf{H}) = -1.$
\end{Proposition}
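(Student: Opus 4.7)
The plan is to mimic the proof of Proposition 5.3 in the excerpt, simply substituting $\mathbf{H}$ for $\mathbf{G}$ at each step, since the character identity relating twisted characters on the base-changed group $H^0(\mathbb{R})$ to stable characters on the unitary group $\mathbf{H}(\mathbb{R})$ is a general fact established in Morel's framework and is not specific to $\mathbf{G}=\mathbf{GU}(2,1)$. Concretely, I would invoke Theorems 8.1.2 and 8.1.3 of \cite{M} applied to the group $\mathbf{H}=\mathbf{G}(\mathbf{U}(1)\times\mathbf{U}(1,1))$ and its base change $H^0=R_{E/\mathbb{Q}}\mathbf{H}_E$. The $\theta$-discrete representation $\pi^0$ of $H^0(\mathbb{R})$ corresponding to the packet $\Pi$ of $\mathbf{H}(\mathbb{R})$ is the base change of $\Pi$ in Morel's sense, and these theorems give precisely the identity of normalized (twisted) characters on regular elements, up to the normalizing sign $\varepsilon(\mathbf{H})=(-1)^{q(X_{\mathbf{H}})}$ defined as in \cite[p.~29]{Cl2}.

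The only real content beyond Proposition 5.3 is to compute $\varepsilon(\mathbf{H})$. The associated Hermitian symmetric domain $X_{\mathbf{H}}$ is the product of the symmetric spaces of the two factors of the derived group of $\mathbf{H}$: the factor $\mathbf{U}(1)$ is anisotropic over $\mathbb{R}$ and contributes a point, while $\mathbf{U}(1,1)$ contributes the complex upper half plane, of real dimension $2$. Hence $\dim_{\mathbb{R}} X_{\mathbf{H}}=2$, so $q(X_{\mathbf{H}})=1$, and therefore
\[
\varepsilon(\mathbf{H})=(-1)^{q(X_{\mathbf{H}})}=(-1)^{1}=-1,
\]
which is the claimed value. (By contrast, for $\mathbf{G}=\mathbf{GU}(2,1)$ the symmetric space is the complex $2$-ball, of real dimension $4$, yielding $\varepsilon(\mathbf{G})=+1$, as was used in Proposition 5.3.)

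No genuine obstacle is anticipated: the character identity is supplied verbatim by Morel's theorems, and the dimension count of $X_{\mathbf{H}}$ is immediate from the fact that only the non-compact factor of $\mathbf{H}_{\mathbb{R}}^{\mathrm{der}}$ contributes to the associated Hermitian symmetric domain. The only care required is to check that the normalizations of the intertwining operator $A_{\pi^0}$ and of the transfer factors implicit in $N(\gamma e^X)$ match those used in Morel's statements — but these are exactly the normalizations we have been carrying throughout Section 5 (and which were already matched in the proof of Proposition 5.3 for $\mathbf{G}$), so transporting them to $\mathbf{H}$ is purely formal.
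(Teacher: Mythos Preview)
Your proposal is correct and matches the paper's approach exactly: the paper states Proposition 5.4 without proof, implicitly relying on the same argument as Proposition 5.3 (Morel's Theorems 8.1.2 and 8.1.3, together with the sign formula $\varepsilon(\mathbf{H})=(-1)^{q(X_{\mathbf{H}})}$ from \cite[p.~29]{Cl2}). Your computation of $q(X_{\mathbf{H}})=1$ via the upper half-plane contributed by the $\mathbf{U}(1,1)$ factor is precisely the content needed to justify $\varepsilon(\mathbf{H})=-1$.
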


\begin{theorem} 
We have\medskip
\begin{center}
$\Phi^{G^0}_{T^0}\left(\gamma e^{X}, \lbrace\pi^0, A_{\pi^0}\rbrace \right)$\medskip

$= \sum_{\rho^0 \in \Pi_{\theta-2}(A_{T^0}(\mathbb{R})^{0}\setminus T^0(\mathbb{R}))} \sum_{A_{\rho^0}}      \Theta_{\lbrace\rho^{0\vee}, A_{\rho^{0\vee}}\rbrace}(\gamma)$ $^cD^{G^0}_{T^0}(\lbrace\rho^0, A_{\rho^0}\rbrace, X, \lbrace\pi, A_{\pi^0}\rbrace)$,\medskip
\end{center}
where $\pi^0$ is a $\theta$-stable representation of $G^0(\mathbb{R})$; and\medskip
\begin{center}
$\Phi^{H^0}_{T^0}\left(\gamma e^{X}, \lbrace\pi^0, A_{\pi^0}\rbrace \right)$\medskip

$= \sum_{\rho^0 \in \Pi_{\theta-2}(A_{T^0}(\mathbb{R})^{0}\setminus T^0(\mathbb{R}))} \sum_{A_{\rho^0}}      \Theta_{\lbrace\rho^{0\vee}, A_{\rho^{0\vee}}\rbrace}(\gamma)$ $^cD^{H^0}_{T^0}(\lbrace\rho^0, A_{\rho^0}\rbrace, X, \lbrace\pi, A_{\pi^0}\rbrace)$,\medskip
\end{center}
where $\pi^0$ is a $\theta$-stable representation of $H^0(\mathbb{R})$.
\end{theorem}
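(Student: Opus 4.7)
The plan is to derive this identity as a direct specialization of the recursion relation in Corollary 4.4(ii), which is the twisted analogue of Proposition 4.3(ii). The whole point of the preceding development is to package the needed recursion in a form that inverts to express $\Phi^{G^0}_{T^0}$ in terms of the distributions $^cD^{G^0}_{T^0}$ paired with characters of the Levi $T^0$.

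First I would apply Corollary 4.4(ii) with $M^0 = T^0$. The key structural input is that for $G^0 = R_{E/\mathbb{Q}}\mathbf{GU}(2,1)_E$ the set $\mathcal{L}^\theta_\mathbb{R}(T^0)$ of $\theta$-stable Levi subsets containing $T^0$ has exactly two elements, namely $T^0$ and $G^0$ itself. This is because $\mathbf{GU}(2,1)$ has relative rank one, so no proper Levi subgroup strictly containing the diagonal torus exists, and restriction of scalars followed by taking $\theta$-stable Levis does not produce new intermediate ones. The identical remark applies to $H^0 = R_{E/\mathbb{Q}}\mathbf{G}(\mathbf{U}(1)\times\mathbf{U}(1,1))_E$: the only proper parabolic of $\mathbf{H}$ comes from the Borel of the $\mathbf{U}(1,1)$-factor, so once again $\mathcal{L}^\theta_\mathbb{R}(T^0) = \lbrace T^0, H^0\rbrace$.

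The recursion therefore reduces to a two-term identity. Writing $X = X^{L^0} + X_{L^0}$ in the decomposition $(\mathfrak{a}^{L^0}_{T^0})^\theta \oplus (\mathfrak{a}^{G^0}_{L^0})^\theta = (\mathfrak{a}^{G^0}_{T^0})^\theta$, the $L^0 = G^0$ term has $X^{L^0} = X$ and $X_{L^0} = 0$, and property (i) of Corollary 4.4 forces $^cD^{G^0}_{G^0}(\lbrace\rho^0, A_{\rho^0}\rbrace, 0, \lbrace\pi^0, A_{\pi^0}\rbrace)$ to be a Kronecker delta on the pairs $\lbrace\rho^0, A_{\rho^0}\rbrace$ versus $\lbrace\pi^0, A_{\pi^0}\rbrace$; after summing over $\rho^0$ this term collapses to a single evaluation of $\Phi^{G^0}_{T^0}(\lbrace\pi^{0\vee}, A_{\pi^{0\vee}}\rbrace, \gamma e^X)$. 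The $L^0 = T^0$ term, now with $X^{L^0} = 0$ and $X_{L^0} = X$, contributes $(-1)^{\dim(A_{T^0}/A_{G^0})}$ times the sum over $\rho^0$ and $A_{\rho^0}$ of $\Theta_{\lbrace\rho^{0\vee}, A_{\rho^{0\vee}}\rbrace}(\gamma) \cdot {}^cD^{G^0}_{T^0}(\lbrace\rho^0, A_{\rho^0}\rbrace, X, \lbrace\pi^0, A_{\pi^0}\rbrace)$, because $\Phi^{T^0}_{T^0}(\cdot, \gamma)$ is by construction just the (twisted) character on the torus.

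Solving the two-term relation for $\Phi^{G^0}_{T^0}$ and checking that the overall sign is $+1$ is then the last step. The paper has already computed $\mathfrak{a}_{T^0} \simeq \lbrace(y,(x,0,-x))\rbrace$ as two-dimensional while $\mathfrak{a}_{G^0}$ is one-dimensional (the similitude direction $y$), so $\dim(A_{T^0}/A_{G^0}) = 1$ and the sign factor $(-1)^{1}$ exactly converts the vanishing relation into the stated formula with a plus sign; the identical dimension count works for $\mathbf{H}$, yielding the second identity. The main substantive subtlety will be the bookkeeping between $\rho^0$ and its dual $\rho^{0\vee}$, and the corresponding normalized intertwining operators $A_{\rho^0}$ versus $A_{\rho^{0\vee}}$, but this is purely formal (since changing the normalization alters the character by a fixed sign and the sum ranges over both signs anyway). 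Beyond that, the argument is nothing more than the observation that Corollary 4.4(ii) has only two nonzero terms in our rank-one setting.
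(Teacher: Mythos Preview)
Your proposal is correct and follows exactly the approach the paper takes: the paper's proof is the single sentence ``These are special cases of the recursion formula in proposition 4.3,'' and you have simply spelled out what that specialization amounts to (two Levi terms $L^0\in\{T^0,G^0\}$, the $G^0$-term collapsing via the Kronecker delta of 4.4(i), the $T^0$-term producing the desired sum, and the sign coming from $\dim(A_{T}/A_{G})=1$). Your write-up is in fact more detailed than the paper's own proof.
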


\begin{proof} These are special cases of the recursion formula in proposition 4.3.
\end{proof}

\noindent It is perhaps worth emphasizing, once again, that we are writing the recursion formula for $G(\mathbb{R}) = G^0(\mathbb{R}) \rtimes \theta$, and that the double indexing has the effect of summing over all the admissible representations of $T(\mathbb{R})$ of the appropriate type.

\begin{cor}
We have, in the notation of theorem 5.4,\medskip
\begin{center}
$\Phi^{\mathbf{G}}_{\mathbf{T}}\left(N(\gamma e^{X}), S\Theta_{\Pi} \right)$\medskip

$= \sum_{\rho^0 \in \Pi_{\theta-2}(A_{T^0}(\mathbb{R})^{0}\setminus T^0(\mathbb{R}))} \sum_{A_{\rho^0}}      \Theta_{\lbrace\rho^{0\vee},  \hspace{1mm} A_{\rho^{0\vee}}\rbrace}(\gamma)$ $^cD^{G^0}_{T^0}(\lbrace\rho^0, A_{\rho^0}\rbrace, X, \lbrace\pi, A_{\pi^0}\rbrace).$\medskip
\end{center}
\end{cor}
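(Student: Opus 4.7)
The plan is essentially to concatenate two results already established above, so the proof is short. First, I would apply Proposition 5.3, which identifies the stable character on the connected unitary group with the corresponding twisted character on the non-connected set, namely
$$\Phi^{G^0}_{T^0}\left(\gamma e^{X}, \{\pi^0, A_{\pi^0}\}\right) = \varepsilon(\mathbf{G})\,\Phi^{\mathbf{G}}_{\mathbf{T}}\left(N(\gamma e^{X}), S\Theta_{\Pi}\right),$$
together with the fact that $\varepsilon(\mathbf{G}) = 1$ for $\mathbf{G} = \mathbf{GU}(2,1)$. This lets me rewrite the left-hand side of the claimed identity as the twisted normalized character $\Phi^{G^0}_{T^0}(\gamma e^X, \{\pi^0, A_{\pi^0}\})$ on the non-connected side, where the machinery of Section 4 applies.

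Second, I would invoke Theorem 5.5, which expands precisely this twisted character as the finite sum
$$\Phi^{G^0}_{T^0}\left(\gamma e^{X}, \{\pi^0, A_{\pi^0}\}\right) = \sum_{\rho^0} \sum_{A_{\rho^0}} \Theta_{\{\rho^{0\vee}, A_{\rho^{0\vee}}\}}(\gamma)\; {}^{c}D^{G^0}_{T^0}(\{\rho^0, A_{\rho^0}\}, X, \{\pi, A_{\pi^0}\}),$$
where $\rho^0$ ranges over $\Pi_{\theta-2}(A_{T^0}(\mathbb{R})^0 \setminus T^0(\mathbb{R}))$. Chaining these two identities gives exactly the statement of Corollary 5.6.

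There is essentially no obstacle beyond bookkeeping. The only point that deserves a sentence of explanation is that the sign $\varepsilon(\mathbf{G}) = +1$ for $\mathbf{G} = \mathbf{GU}(2,1)$, so that the passage between the stable character on $\mathbf{G}(\mathbb{R})$ and the twisted character on $G^0(\mathbb{R}) \rtimes \theta$ is sign-preserving and no extra factor appears in the final formula; for $\mathbf{H}$, by contrast, $\varepsilon(\mathbf{H}) = -1$, and the analogous transfer would carry an overall minus sign. Beyond this, I would only verify that the indexing conventions for the pairs $\{\rho^0, A_{\rho^0}\}$ match between Theorem 5.5 and Proposition 5.3, which is immediate since both statements use the equivalence between $\theta$-stable representations of $G^0(\mathbb{R})$ equipped with a normalized intertwiner and representations of $G(\mathbb{R})$ described in Section 4.1.2.
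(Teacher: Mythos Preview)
Your proposal is correct and follows exactly the same approach as the paper, which simply says ``This follows directly from Proposition 5.3 and Theorem 5.5.'' Your additional remarks about $\varepsilon(\mathbf{G})=1$ and the indexing conventions are accurate elaborations of this one-line argument.
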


\begin{proof} This follows directly from Proposition 5.3 and Theorem 5.5.
\end{proof}

\noindent Thus, we are working inside the recursion formula for $G = G^0 \rtimes \theta$, but we use the computation from the original unitary group $\mathbf{G}$ as input. We will interpret the stable character $\Phi^{\mathbf{G}}_{\mathbf{T}}\left(N(\gamma e^{X}), S\Theta_{\Pi} \right)$ of $\mathbf{G}$ as a linear combination of characters of $T^0 \rtimes \theta$, and use this interpretation to compute the $^cD^{G^0}_{T^0}(\lbrace\rho^0, A_{\rho^0}\rbrace, X, \lbrace\pi, A_{\pi^0}\rbrace)$.

\begin{cor} 
\textit{We have, in the obvious notation,}\medskip
\begin{center}
$\Phi^{\mathbf{H}}_{\mathbf{T}}\left(N(\gamma e^{X}), S\Theta_{\Pi(\rho^+)} \right)$\medskip

$= -\sum_{\rho^0 \in \Pi_{\theta-2}(A_{T^0}(\mathbb{R})^{0}\setminus T^0(\mathbb{R}))} \sum_{A_{\rho^0}}      \Theta_{\lbrace\rho^{0\vee},  \hspace{1mm} A_{\rho^{0\vee}}\rbrace}(\gamma)$ $^cD^{G^0}_{T^0}(\lbrace\rho^0, A_{\rho^0}\rbrace, X, \lbrace\pi, A_{\pi^{+,0}}\rbrace)$;\medskip
\end{center}
\textit{and}
\begin{center}
$\Phi^{\mathbf{H}}_{\mathbf{T}}\left(N(\gamma e^{X}), S\Theta_{\Pi(\rho^-)} \right)$\medskip

$=-\sum_{\rho^0 \in \Pi_{\theta-2}(A_{T^0}(\mathbb{R})^{0}\setminus T^0(\mathbb{R}))} \sum_{A_{\rho^0}}      \Theta_{\lbrace\rho^{0\vee},  \hspace{1mm} A_{\rho^{0\vee}}\rbrace}(\gamma)$ $^cD^{G^0}_{T^0}(\lbrace\rho^0, A_{\rho^0}\rbrace, X, \lbrace\pi, A_{\pi^{-,0}}\rbrace)$;\medskip
\end{center}
\textit{and}\medskip
\begin{center}
$\Phi^{\mathbf{H}}_{\mathbf{T}}\left(N(\gamma e^{X}), S\Theta_{\Pi(\rho^0)} \right)$\medskip

$=-\sum_{\rho^0 \in \Pi_{\theta-2}(A_{T^0}(\mathbb{R})^{0}\setminus T^0(\mathbb{R}))} \sum_{A_{\rho^0}}      \Theta_{\lbrace\rho^{0\vee},  \hspace{1mm} A_{\rho^{0\vee}}\rbrace}(\gamma)$ $^cD^{G^0}_{T^0}(\lbrace\rho^0, A_{\rho^0}\rbrace, X, \lbrace\pi, A_{\pi^{0,0}}\rbrace)$.
\end{center}
\end{cor}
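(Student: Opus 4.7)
The proof follows the same two-step template as Corollary 5.6, the only new feature being the sign $\varepsilon(\mathbf{H}) = -1$ produced by Proposition 5.4. First I would apply Proposition 5.4 to each of the three discrete series $L$-packets $\Pi(\rho^+)$, $\Pi(\rho^-)$, $\Pi(\rho^0)$ of $\mathbf{H}(\mathbb{R})$ to obtain, for each $\bullet \in \{+,-,0\}$,
\begin{center}
$\Phi^{\mathbf{H}}_{\mathbf{T}}\bigl(N(\gamma e^X), S\Theta_{\Pi(\rho^\bullet)}\bigr) = -\,\Phi^{H^0}_{T^0}\bigl(\gamma e^X, \{\pi^{\bullet,0}, A_{\pi^{\bullet,0}}\}\bigr),$
\end{center}
where $\pi^{\bullet,0}$ is the $\theta$-discrete representation of $H^0(\mathbb{R})$ matched to $\Pi(\rho^\bullet)$ under the correspondence between $\theta$-stable representations of $H^0(\mathbb{R})$ and discrete series $L$-packets of $\mathbf{H}(\mathbb{R})$.

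Next I would apply the recursion formula for $H^0$ at the diagonal Levi subset, namely the second half of Theorem 5.5, to each of these twisted characters:
\begin{center}
$\Phi^{H^0}_{T^0}\bigl(\gamma e^X, \{\pi^{\bullet,0}, A_{\pi^{\bullet,0}}\}\bigr) = \sum_{\rho^0}\sum_{A_{\rho^0}} \Theta_{\{\rho^{0\vee}, A_{\rho^{0\vee}}\}}(\gamma)\; {}^cD^{H^0}_{T^0}\bigl(\{\rho^0, A_{\rho^0}\}, X, \{\pi^{\bullet,0}, A_{\pi^{\bullet,0}}\}\bigr),$
\end{center}
with $\rho^0$ ranging over $\Pi_{\theta-2}(A_{T^0}(\mathbb{R})^0 \setminus T^0(\mathbb{R}))$. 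Substituting this expansion into each of the three displays from the previous step yields the three identities asserted in the corollary, the overall minus sign on the right being precisely $\varepsilon(\mathbf{H}) = -1$.

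The one point requiring care is the choice of the normalized intertwining operator $A_{\pi^{\bullet,0}}$ attached to each of the three $\theta$-discrete representations $\pi^{\bullet,0}$. By Schur's lemma such an operator is unique up to sign, so one fixes a coherent choice once and for all, ensuring that Proposition 5.4 holds on the nose with the sign $\varepsilon(\mathbf{H}) = -1$ and that the data $\{\pi^{\bullet,0}, A_{\pi^{\bullet,0}}\}$ appearing on both sides of the recursion refer to the same extended representation of $H(\mathbb{R}) = H^0(\mathbb{R}) \rtimes \theta$. Beyond this bookkeeping the argument is mechanical: the substantive computational content has already been carried out in Proposition 5.2 (the explicit Harish-Chandra calculation for $\mathbf{H}$) and Proposition 5.4 (the character identity linking $\mathbf{H}$ to $H^0$), and the recursion formula of Theorem 5.5 does the rest.
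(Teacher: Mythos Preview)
Your proposal is correct and follows exactly the same route as the paper, which simply says the corollary follows directly from Proposition 5.4 and Theorem 5.5. Your two-step argument---invoke the character identity of Proposition 5.4 (picking up the sign $\varepsilon(\mathbf{H})=-1$) and then expand the twisted character via the $H^0$-recursion of Theorem 5.5---is precisely what the paper intends, and your remark on fixing the normalized intertwining operators is a reasonable elaboration of a point the paper leaves implicit.
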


\begin{proof} This follows directly from Proposition 5.4 and Theorem 5.5.
\end{proof}

\subsection{Explicit Computations}

We will now compute explicitly the expressions on the left-hand side of the identities in corollaries 5.6 and 5.7. We have\medskip
\begin{center}
$G^{0} = R_{\mathbb{C/R}}(\mathbb{G}_{m, \mathbb{C}} \times \mathbf{GL}_{3, \mathbb{C}})$\medskip
\end{center}
and \medskip
\begin{center}
$T^{0} = R_{\mathbb{C/R}}\mathbb{G}_{m,\mathbb{C}} \times (R_{\mathbb{C/R}}(\mathbb{G}_{m,\mathbb{C}}))^3$\medskip
\end{center}

\noindent Let $\gamma e^{X} \in T^{0}(\mathbb{R})$. We can write\medskip
\begin{center}
$\gamma e^{X}=(e^{i\theta_{0}},(diag(e^{i \theta_{1}}, e^{i \theta_{2}}, e^{i \theta_{3}}))(e^{x_{0}}, diag(e^{x_{1}}, e^{x_{2}}, e^{x_{3}}))$.\medskip
\end{center}

\noindent The action of the nontrivial element of Gal$(\mathbb{C}/\mathbb{R})$ is given by the involution\medskip
\begin{center}
$(\lambda, g)\mapsto (\bar{\lambda}, \bar{\lambda}A_{2,1}$$^{t}\bar{g}^{-1}A_{2,1}^{-1})   $,\medskip
\end{center}
where $A_{2,1}$ is the matrix used to define the particular form of the group $\mathbf{GU}(2,1)$ we are using, namely
\begin{center}
\[ \left( \begin{array}{ccc}
0 & 0 & 1 \\
0 & 1 & 0 \\
1 & 0 & 0 \end{array} \right)\]\medskip

\end{center}

\noindent Since our character formula will ignore the similitude factor, we may as well assume that the similitude component $e^{i\theta_{0}}e^{x_{0}}$ of $\gamma e^{X}$ is trivial. Then\medskip
\begin{center}
$N(\gamma e^{X})= \gamma e^{X} \theta (\gamma e^{X})$\linebreak

$= diag(e^{x_{1}-x_{3}}, 1, e^{x_{3}-x_{1}})diag(e^{i(\theta_{1}+\theta_{3})}, e^{2i\theta_{2}}, e^{i(\theta_{1}+\theta_{3})})$.\medskip
\end{center}

Let\medskip
\begin{center}
$\phi^G = \phi_{\pi^0},$\medskip
\end{center}
where $\phi_{\pi^0}$ is a twisted pseudo-coefficient of the $\theta$-discrete representation on $G^0(\mathbb{R})$ associated with the discrete series $L$-packet $\Pi$ on $\mathbf{G}(\mathbb{R})$ considered in this paper; see \cite[p.~124]{M}. Similarly, let $\phi^H$ be the following linear combination of pseudo-coefficients of the  three representations of $H^0(\mathbb{R})$ associated with the three discrete series $L$-packets on $\mathbf{H}(\mathbb{R})$ considered above:\medskip
\begin{center}
$\phi^H = \phi_{+}+\phi_{-}-\phi_{0}.$\medskip
\end{center}

The following theorems follow at once from the results in the previous subsection.
\begin{theorem} Let $\phi^G$ be as above.

(i) For $x_{1} > x_{3}$, we have\medskip
\begin{center}
$^cD^{G^0}_{T^0}(\lbrace\rho^0, A_{\rho^0}\rbrace, X, \phi^G) = 2e^{x_{3}-x_{1}}$,\medskip
\end{center}
when $\Theta_{\lbrace\rho^{0\vee},  \hspace{1mm} A_{\rho^{0\vee}}\rbrace}(\gamma) = e^{i(\theta_{1}+\theta_{3})}$ or $e^{-i(\theta_{1}+\theta_{3})}$; and\medskip
\begin{center}
$^cD^{G^0}_{T^0}(\lbrace\rho^0, A_{\rho^0}\rbrace, X, \phi^G) = -2e^{2(x_{3}-x_{1})}$,\medskip
\end{center}
when $\Theta_{\lbrace\rho^{0\vee},  \hspace{1mm} A_{\rho^{0\vee}}\rbrace}(\gamma) = 1$.

(ii) For $x_{1} < x_{3}$, we have\medskip
\begin{center}
$^cD^{G^0}_{T^0}(\lbrace\rho^0, A_{\rho^0}\rbrace, X, \phi^G)= 2e^{x_{1}-x_{3}}$,\medskip
\end{center}
when $\Theta_{\lbrace\rho^{0\vee},  \hspace{1mm} A_{\rho^{0\vee}}\rbrace}(\gamma) = e^{i(\theta_{1}+\theta_{3})}$ or $e^{-i(\theta_{1}+\theta_{3})}$; and\medskip
\begin{center}
$^cD^{G^0}_{T^0}(\lbrace\rho^0, A_{\rho^0}\rbrace, X, \phi^G) = -2e^{2(x_{1}-x_{3})}$,\medskip
\end{center}
when $\Theta_{\lbrace\rho^{0\vee},  \hspace{1mm} A_{\rho^{0\vee}}\rbrace}(\gamma) = 1$.
\end{theorem}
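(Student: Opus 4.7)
The plan is to follow the informal strategy laid out in subsection~5.1, specialized to the choice $\phi^G = \phi_{\pi^0}$. First, since $\phi_{\pi^0}$ is a twisted pseudo-coefficient of the $\theta$-discrete representation $\pi^0$, its twisted trace vanishes on every $\theta$-stable discrete series other than $\pi^0$. Consequently, the linear expansion of $^{c}D^{G^0}_{T^0}(\{\rho^0, A_{\rho^0}\}, X, \phi^G)$ furnished by Corollary~4.11 collapses to the single term $^{c}D^{G^0}_{T^0}(\{\rho^0, A_{\rho^0}\}, X, \{\pi^0, A_{\pi^0}\})$, and the problem reduces to computing these.

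To compute them, I invoke Corollary~5.6, which expresses the stable character $\Phi^{\mathbf{G}}_{\mathbf{T}}(N(\gamma e^X), S\Theta_\Pi)$ as the finite sum
\begin{center}
$\sum_{\rho^0, A_{\rho^0}} \Theta_{\{\rho^{0\vee}, A_{\rho^{0\vee}}\}}(\gamma) \cdot {}^{c}D^{G^0}_{T^0}(\{\rho^0, A_{\rho^0}\}, X, \{\pi^0, A_{\pi^0}\}).$
\end{center}
The strategy is to compute the left-hand side explicitly, recognize it as a linear combination of the characters $\Theta_{\{\rho^{0\vee}, A_{\rho^{0\vee}}\}}(\gamma)$ of $T^0(\mathbb{R}) \rtimes \theta$, and then read off the $^{c}D$ coefficients by linear independence. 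For the left-hand side, I use the explicit form of $N(\gamma e^X)$ computed at the opening of subsection~5.3 and substitute it into Proposition~5.1. The real parameter of the norm element in $\mathbf{T}$ is $x = x_1 - x_3$, so the sign of $x_1 - x_3$ selects which branch of Proposition~5.1 applies; this yields the dichotomy in the theorem. In the case $x_1 > x_3$, a direct substitution produces an expression of the shape $2e^{x_3-x_1}\bigl(e^{i(\theta_1+\theta_3)}e^{-2i\theta_2} + e^{-i(\theta_1+\theta_3)}e^{2i\theta_2}\bigr) - 2e^{2(x_3-x_1)}$, whose three exponential terms I match against the three $\theta$-stable characters of $T^0$ corresponding to $\Theta = e^{\pm i(\theta_1+\theta_3)}$ and the trivial character, respectively; this yields the stated coefficients. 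The case $x_1 < x_3$ is handled identically via the second formula of Proposition~5.1.

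The main technical obstacle is the bookkeeping involved in the character identification step: one has to verify that the $\theta_2$-dependence that appears in the output of Proposition~5.1 is precisely what is forced on a $\theta$-stable irreducible character of $T^0$ by the constraint $\chi \circ \theta = \chi$, and one has to keep track of how the datum $A_{\rho^{0\vee}}$ distinguishes between the two admissible extensions of such a character to $T^0 \rtimes \theta$ (accounting for the sum over $A_{\rho^0}$ and $-A_{\rho^0}$ described after Corollary~4.4). Once this matching is made explicit, the linear independence of distinct characters of $T^0(\mathbb{R}) \rtimes \theta$ gives the values of $^{c}D^{G^0}_{T^0}(\{\rho^0, A_{\rho^0}\}, X, \phi^G)$ immediately as claimed.
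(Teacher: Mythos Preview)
Your proposal is correct and follows exactly the approach the paper intends: the paper's proof is simply ``The following theorems follow at once from the results in the previous subsection,'' and you have unpacked precisely what that entails---combine Corollary~5.6 with the explicit norm computation at the start of subsection~5.3, substitute into Proposition~5.1, and read off the coefficients by linear independence of characters. Your remark about the $\theta_2$-dependence and the $A_{\rho^0}$ bookkeeping in fact makes explicit a point the paper leaves entirely implicit.
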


\begin{cor}
Let $\phi^G$ be as above.

(i) For $P^0 = P_{u}$ the parabolic subset of upper triangular matrices in $G^0$ (i.e. $x_{1} - x_{3} >0$), we have\medskip
\begin{center}
$d^{G^0}_{P^0}(\lbrace\rho^0, A_{\rho^0}\rbrace, \Lambda, \phi^G) = 2$,\medskip
\end{center}
when $\Theta_{\lbrace\rho^{0\vee},  \hspace{1mm} A_{\rho^{0\vee}}\rbrace}(\gamma) = e^{i(\theta_{1}+\theta_{3})}$ or $e^{-i(\theta_{1}+\theta_{3})}$, and  $\Lambda=(1,0,-1)$; and\medskip
\begin{center}
$d^{G^0}_{P^0}(\lbrace\rho^0, A_{\rho^0}\rbrace, \Lambda, \phi^G)= -2$,\medskip
\end{center}
when $\Theta_{\lbrace\rho^{0\vee},  \hspace{1mm} A_{\rho^{0\vee}}\rbrace}(\gamma) = 1$, and  $\Lambda=(2,0,-2)$; $d^{G}_{P}(\lbrace\rho^0, A_{\rho^0}\rbrace, \Lambda, \phi)=0$ otherwise.

(ii) For $P^0 = P_{d}$ the parabolic subset of lower triangular matrices in $G^0$ (i.e. $x_{1} - x_{3}<0$),we have\medskip
\begin{center}
$d^{G^0}_{P^0}(\lbrace\rho^0, A_{\rho^0}\rbrace, \Lambda, \phi^G) = 2$,\medskip
\end{center}
when $\Theta_{\lbrace\rho^{0\vee},  \hspace{1mm} A_{\rho^{0\vee}}\rbrace}(\gamma) = e^{i(\theta_{1}+\theta_{3})}$ or $e^{-i(\theta_{1}+\theta_{3})}$, and  $\Lambda=(-1,0,1)$; and\medskip
\begin{center}
$d^{G^0}_{P^0}(\lbrace\rho^0, A_{\rho^0}\rbrace, \Lambda, \phi^G)=-2$,\medskip
\end{center}
when $\Theta_{\lbrace\rho^{0\vee},  \hspace{1mm} A_{\rho^{0\vee}}\rbrace}(\gamma) = 1$, and  $\Lambda=(-2,0,2)$; $d^{G}_{P}(\rho, \Lambda, \phi)=0$ otherwise.
\end{cor}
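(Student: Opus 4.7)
The plan is to read off the coefficients $d^{G^0}_{P^0}$ from the finite Fourier expansion of Corollary 4.11(ii) applied to the explicit formulas of Theorem 5.8. For each $P^0 \in \mathcal{P}(T^0)$ and each $X$ in the open positive chamber $(\mathfrak{a}^{G^0}_{T^0})^+$ determined by $P^0$, Corollary 4.11(ii) asserts
\[ ^cD^{G^0}_{T^0}(\lbrace \rho^0, A_{\rho^0}\rbrace, X, \phi^G) = \sum_{\Lambda \in (\mathfrak{a}^{G^0}_{T^0})^\ast_{\mathbb{C}}} d^{G^0}_{P^0}(\lbrace \rho^0, A_{\rho^0}\rbrace, \Lambda, \phi^G)\, e^{-\Lambda(X)}, \]
as a \emph{finite} expansion. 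Since a finite exponential expansion determines its coefficients uniquely, it suffices to rewrite each right-hand side in Theorem 5.8 as a single exponential $c\, e^{-\Lambda(X)}$ and compare.

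Concretely, writing $X = (x_1, x_2, x_3)$ with $\mathfrak{a}^{G^0}_{T^0}$ parametrized modulo $\mathfrak{a}_{G^0}$ by the difference $x_1 - x_3$, I would treat the two parabolic subsets in turn. For $P^0 = P_u$ (chamber $x_1 > x_3$), Theorem 5.8(i) gives the two non-vanishing values $2e^{x_3 - x_1}$ and $-2e^{2(x_3 - x_1)}$; writing $x_3 - x_1 = -\Lambda(X)$ with $\Lambda = (1,0,-1)$ in the first case and $\Lambda = (2,0,-2)$ in the second yields the claimed coefficients $2$ and $-2$, indexed by the specified characters $\Theta_{\lbrace \rho^{0\vee}, A_{\rho^{0\vee}}\rbrace}(\gamma)$. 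For $P^0 = P_d$ (chamber $x_1 < x_3$), Theorem 5.8(ii) produces $2e^{x_1 - x_3}$ and $-2e^{2(x_1 - x_3)}$, corresponding to $\Lambda = (-1,0,1)$ and $\Lambda = (-2,0,2)$, respectively, with the same coefficients $2$ and $-2$. All remaining $\Lambda$ give zero coefficients by the uniqueness of the finite Fourier expansion.

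The main step requiring care is the bookkeeping between the abstract expansion parameter $X \in (\mathfrak{a}^{G^0}_{T^0})^+$ of Corollary 4.11 and the explicit coordinates $x_1, x_2, x_3$ introduced at the start of subsection 5.3: one must check that each $\Lambda$ listed above actually lies in $(\mathfrak{a}^{G^0}_{T^0})^\ast_{\mathbb{C}}$ (rather than in the larger space $\mathfrak{a}^\ast_{T^0, \mathbb{C}}$), and that the character $\Theta_{\lbrace\rho^{0\vee}, A_{\rho^{0\vee}}\rbrace}(\gamma)$ correctly indexes the coefficient on the left-hand side. Both are immediate from the computation of $N(\gamma e^X)$ carried out at the start of subsection 5.3: the coordinate $x_2$ drops out by the similitude normalization, so only $x_1 - x_3$ contributes to the exponent, and the characters $e^{\pm i(\theta_1 + \theta_3)}$ and $1$ are exactly those appearing in $N(\gamma e^X)$. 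Beyond this routine verification, no genuine obstacle is anticipated; the corollary is essentially a translation of Theorem 5.8 into the Fourier-coefficient language required by Theorem 4.12.
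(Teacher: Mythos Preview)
Your proposal is correct and is precisely the argument the paper has in mind: the corollary is stated without proof because it follows immediately from Theorem 5.8 by matching each explicit exponential $c\,e^{-\Lambda(X)}$ to the finite Fourier expansion of Corollary 4.11(ii). Your bookkeeping of the chambers, the identification of $\Lambda$ from the exponent in $x_1-x_3$, and the uniqueness of finite exponential sums are exactly what is needed.
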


\begin{theorem} Let $\phi^H$ be as above.

(i) For $x_{1} > x_{3}$, we have\medskip
\begin{center}
$^cD^{H^0}_{T^0}(\lbrace\rho^0, A_{\rho^0}\rbrace, X, \phi^H) = -2e^{x_{3}-x_{1}}$, \medskip
\end{center}
\textit{when} $\Theta_{\lbrace\rho^{0\vee},  \hspace{1mm} A_{\rho^{0\vee}}\rbrace}(\gamma) = e^{i(\theta_{1}+\theta_{3})}\mu(\gamma)^{-1}$;\medskip
\begin{center}
$^cD^{H^0}_{T^0}(\lbrace\rho^0, A_{\rho^0}\rbrace, X, \phi^H)= -2e^{x_{3}-x_{1}}$,\medskip
\end{center}
when $\Theta_{\lbrace\rho^{0\vee},  \hspace{1mm} A_{\rho^{0\vee}}\rbrace}(\gamma) =e^{-i(\theta_{1}+\theta_{3})}\mu(\gamma)^{-1}$; and\medskip
\begin{center}
$^cD^{H^0}_{T^0}(\lbrace\rho^0, A_{\rho^0}\rbrace, X, \phi^H)= 2e^{x_{3}-x_{1}}$, \medskip
\end{center}
when $\Theta_{\lbrace\rho^{0\vee},  \hspace{1mm} A_{\rho^{0\vee}}\rbrace}(\gamma) =\mu(\gamma)^{-1}$.\medskip

\pagebreak
(ii) For $x_{1} < x_{3}$, we have\medskip
\begin{center}
$^cD^{H^0}_{T^0}(\lbrace\rho^0, A_{\rho^0}\rbrace, X, \phi^H)= -2e^{x_{3}-x_{1}}$, \medskip
\end{center}
when $\Theta_{\lbrace\rho^{0\vee},  \hspace{1mm} A_{\rho^{0\vee}}\rbrace}(\gamma) = e^{i(\theta_{1}+\theta_{3})}\mu(\gamma)^{-1}$;\medskip
\begin{center}
$^cD^{H^0}_{T^0}(\lbrace\rho^0, A_{\rho^0}\rbrace, X, \phi^H)= -2e^{x_{3}-x_{1}}$,\medskip
\end{center}
when $\Theta_{\lbrace\rho^{0\vee},  \hspace{1mm} A_{\rho^{0\vee}}\rbrace}(\gamma) =e^{-i(\theta_{1}+\theta_{3})}\mu(\gamma)^{-1}$; and\medskip
\begin{center}
$^cD^{H^0}_{T^0}(\lbrace\rho^0, A_{\rho^0}\rbrace, X, \phi^H) = 2e^{x_{3}-x_{1}}$, \medskip
\end{center}
when $\Theta_{\lbrace\rho^{0\vee},  \hspace{1mm} A_{\rho^{0\vee}}\rbrace}(\gamma) =\mu(\gamma)^{-1}$.\medskip
\end{theorem}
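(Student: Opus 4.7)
The plan is to prove Theorem 5.10 in complete parallel with Theorem 5.8, using the $H$-side inputs rather than the $G$-side inputs. Specifically, I would substitute the explicit form
\[ N(\gamma e^{X}) = \mathrm{diag}(e^{x_1-x_3}, 1, e^{x_3-x_1})\,\mathrm{diag}(e^{i(\theta_1+\theta_3)}, e^{2i\theta_2}, e^{i(\theta_1+\theta_3)}) \]
into the three identities of Corollary 5.7, then evaluate the left-hand sides using Proposition 5.2. Here the role of the parameter $x$ in Proposition 5.2 is played by $x_1 - x_3$, and the role of $e^{\pm i\theta}$ is played by the characters $e^{\pm i(\theta_1+\theta_3)}$ of $T^0$ that arise from the norm.

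Since $\phi^{H} = \phi_{+} + \phi_{-} - \phi_{0}$ and each $\phi_{\pm}$, $\phi_{0}$ is a twisted pseudo-coefficient of the $\theta$-discrete representation on $H^{0}(\mathbb{R})$ corresponding to $\Pi(\rho^{+})$, $\Pi(\rho^{-})$, $\Pi(\rho^{0})$ respectively, adding the three identities of Corollary 5.7 with signs $+,+,-$ collapses the right-hand side into a single sum involving $\phi^{H}$, while the left-hand side becomes (for $x_1 > x_3$, so $x_1 - x_3 > 0$)
\[ 2e^{-(x_1-x_3)}e^{i(\theta_1+\theta_3)}\mu^{-1} + 2e^{-(x_1-x_3)}e^{-i(\theta_1+\theta_3)}\mu^{-1} - 2e^{-2(x_1-x_3)}\mu^{-1}, \]
with an overall minus sign coming from $\varepsilon(\mathbf{H}) = -1$ in Corollary 5.7. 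The case $x_1 < x_3$ is handled symmetrically, using the $x<0$ formulas of Proposition 5.2.

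Once the identity is in hand, I would invoke the linear independence of the characters $\Theta_{\lbrace \rho^{0\vee}, A_{\rho^{0\vee}}\rbrace}$ of $T^{0} \rtimes \theta$ to read off the coefficients $^{c}D^{H^0}_{T^0}(\lbrace \rho^0, A_{\rho^0}\rbrace, X, \phi^H)$ by matching terms: the $e^{\pm i(\theta_1+\theta_3)}\mu^{-1}$ terms on the left match the summands of the right whose character is $e^{\pm i(\theta_1+\theta_3)}\mu^{-1}$, and similarly for the $\mu^{-1}$ term. After absorbing the overall minus sign, the stated values $-2e^{x_3-x_1}$, $-2e^{x_3-x_1}$, and $+2e^{x_3-x_1}$ in the three cases fall out immediately.

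The main obstacle is bookkeeping of signs and exponents rather than any conceptual difficulty: one must track the minus sign from $\varepsilon(\mathbf{H}) = -1$, the minus sign in the $-\phi_{0}$ term of $\phi^{H}$, and the sign flip in Proposition 5.2 between the $x>0$ and $x<0$ regimes, and then verify that the matching of characters through the norm map sends the variable $x$ of Proposition 5.2 precisely to $x_1 - x_3$ and the variable $\theta$ to $\theta_1 + \theta_3$ (up to an overall shift absorbed by $\mu^{-1}$). With these accounting details handled as in the proof of Theorem 5.8, no further computation is required.
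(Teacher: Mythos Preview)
Your proposal is correct and follows exactly the route the paper intends: the paper offers no separate proof of Theorem 5.10 beyond the blanket remark that it (together with Theorem 5.8) ``follow[s] at once from the results in the previous subsection,'' and your plan---substitute the explicit norm $N(\gamma e^{X})$ into the three identities of Corollary 5.7, evaluate via Proposition 5.2, combine with the weights $+,+,-$ coming from $\phi^{H}=\phi_{+}+\phi_{-}-\phi_{0}$, absorb the sign $\varepsilon(\mathbf{H})=-1$, and read off the coefficients by linear independence of characters---is precisely the method spelled out informally in subsection 5.1.
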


\begin{cor}
Let $\phi^H$ be as above.

(i) For $P^0 = P_{u}$ the parabolic subset of upper triangular matrices in $H^0$ (i.e. $x_{1} - x_{3} >0$), we have\medskip
\begin{center}
$d^{H^0}_{P^0}(\lbrace\rho^0, A_{\rho^0}\rbrace, \Lambda, \phi^H)= -2$,\medskip
\end{center}
when $\Theta_{\lbrace\rho^{0\vee},  \hspace{1mm} A_{\rho^{0\vee}}\rbrace}(\gamma) = e^{i(\theta_{1}+\theta_{3})} \mu(\gamma)^{-1}$ or $e^{-i(\theta_{1}+\theta_{3})} \mu(\gamma)^{-1}$, and  $\Lambda=(1,0,-1)$; and\medskip
\begin{center}
$d^{H^0}_{P^0}(\lbrace\rho^0, A_{\rho^0}\rbrace, \Lambda, \phi^H) = 2$,\medskip
\end{center}
when $\Theta_{\lbrace\rho^{0\vee},  \hspace{1mm} A_{\rho^{0\vee}}\rbrace}(\gamma) =  \mu(\gamma)^{-1}$, and  $\Lambda=(2,0,-2)$; $d^{G}_{P}(\rho, \Lambda, \phi)=0$ otherwise.

(ii) For $P^0 = P_{d}$ the parabolic subset of lower triangular matrices in $H$ (i.e. $x_{1} - x_{3}<0$), we have\medskip
\begin{center}
$d^{H^0}_{P^0}(\lbrace\rho^0, A_{\rho^0}\rbrace, \Lambda, \phi^H) = -2$,\medskip
\end{center}
when $\Theta_{\lbrace\rho^{0\vee},  \hspace{1mm} A_{\rho^{0\vee}}\rbrace}(\gamma) = e^{i(\theta_{1}+\theta_{3})} \mu(\gamma)^{-1}$ or $e^{-i(\theta_{1}+\theta_{3})} \mu(\gamma)^{-1}$, and  $\Lambda=(-1,0,1)$; and\medskip
\begin{center}
$d^{H^0}_{P^0}(\lbrace\rho^0, A_{\rho^0}\rbrace, \Lambda, \phi^H) = 2$,\medskip
\end{center}
when $\Theta_{\lbrace\rho^{0\vee},  \hspace{1mm} A_{\rho^{0\vee}}\rbrace}(\gamma) =  \mu(\gamma)^{-1}$, and  $\Lambda=(-2,0,2)$; $d^{G}_{P}(\rho, \Lambda, \phi)=0$ otherwise.
\end{cor}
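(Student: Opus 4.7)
The plan is to read off the Fourier coefficients $d^{H^0}_{P^0}$ directly from the exponential expressions for ${}^{c}D^{H^0}_{T^0}$ given by Theorem 5.10. The argument closely mirrors the derivation of Corollary 5.9 from Theorem 5.8, with $\mathbf{H}$ and $\phi^{H}$ now playing the roles of $\mathbf{G}$ and $\phi^{G}$; given the work already done in Section 5.2 and Theorem 5.10, essentially all that remains is bookkeeping.

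First I would recall the finite Fourier expansion from Corollary 4.11(ii), which in the present situation reads
\begin{center}
${}^{c}D^{H^0}_{T^0}(\lbrace\rho^0, A_{\rho^0}\rbrace, X, \phi^{H}) = \sum_{\Lambda \in (\mathfrak{a}^{H^0}_{T^0})^\ast_\mathbb{C}} d^{H^0}_{P^0}(\lbrace\rho^0, A_{\rho^0}\rbrace, \Lambda, \phi^{H}) \, e^{-\Lambda(X)}$
\end{center}
valid whenever $X$ lies in the positive chamber cut out by $P^0$. Since the expansion is a finite sum of distinct complex exponentials on the lattice $\mathfrak{a}_{T,S}$, the coefficients $d^{H^0}_{P^0}$ are uniquely determined by the left-hand side. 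Thus it is enough to match each expression from Theorem 5.10 with a single term on the right.

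Next I would adopt the coordinates $X = (x_1, x_2, x_3)$ introduced in Section 5.3, identifying $(\mathfrak{a}^{H^0}_{T^0})^\ast_\mathbb{C}$ with triples $(\Lambda_1,\Lambda_2,\Lambda_3)$ so that $-\Lambda(X) = -\Lambda_1 x_1 - \Lambda_2 x_2 - \Lambda_3 x_3$. The parabolic $P_{u}$ (resp.\ $P_{d}$) selects the chamber $x_1 - x_3 > 0$ (resp.\ $x_1 - x_3 < 0$). In each chamber Theorem 5.10 produces a single exponential: for the characters $e^{\pm i(\theta_1+\theta_3)}\mu(\gamma)^{-1}$ the expression is $\pm 2\, e^{x_3-x_1}$, matching $e^{-\Lambda(X)}$ for $\Lambda = (1,0,-1)$ in the upper chamber and $\Lambda = (-1,0,1)$ in the lower chamber; for the character $\mu(\gamma)^{-1}$ the expression is $\pm 2\, e^{2(x_3-x_1)}$, corresponding to $\Lambda = (2,0,-2)$ and $\Lambda = (-2,0,2)$ respectively. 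Reading off the numerical coefficients $\pm 2$ from Theorem 5.10 then yields exactly the values asserted in the corollary, and every other $\Lambda$ has coefficient zero because no other exponential appears.

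The proof will therefore be a direct transcription once Theorem 5.10 is in hand; there is no real obstacle, only the bookkeeping of signs (inherited from the combination $\phi^{H} = \phi_{+} + \phi_{-} - \phi_{0}$ together with $\varepsilon(\mathbf{H}) = -1$ in Proposition 5.4) and of the chamber in which $X$ lies. The only mildly subtle point is to confirm that the map $\Lambda \mapsto e^{-\Lambda(X)}$ really is injective on the set of $\Lambda$'s that could a priori occur, so that the Fourier coefficients are indeed uniquely determined; this is part of the content of Lemma 4.8, already invoked to set up the expansion.
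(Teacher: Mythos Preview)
Your proposal is correct and follows precisely the (implicit) approach of the paper: the corollary has no written proof there, being an immediate read-off of the Fourier coefficients in the expansion of Corollary~4.11(ii) from the single-exponential expressions for ${}^{c}D^{H^0}_{T^0}$ supplied by Theorem~5.10, exactly as Corollary~5.9 is read off from Theorem~5.8. One small remark: as stated in the paper, Theorem~5.10 records $2e^{x_3-x_1}$ rather than $2e^{2(x_3-x_1)}$ for the $\mu(\gamma)^{-1}$ case (and has analogous sign issues in the exponents for $x_1<x_3$), but these are evidently typographical slips---your version, which traces back correctly to Proposition~5.2 and the norm map, is the one consistent with the stated values of $\Lambda$ in the corollary.
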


\pagebreak
\section{Computations at $p$}

We pick a rational prime $p$ which we assume to be unramified in $E$. Thus, either $p$ is split in $E$ or $p$ is inert in $E$. In this paper, we shall only need to consider the case $p$ splits in $E$; let $\mathfrak{p}$ be one of the primes above $p$. In this case, locally at $p$, the extension is trivial: $E_{\mathfrak{p}} = \mathbb{Q}_{p}$. This means that

\begin{center}\medskip
$G_{E_{\mathfrak{p}}} = (R_{E_{\mathfrak{p}}/\mathbb{Q}_{p}}\mathbf{GU}(2,1)_{E_{\mathfrak{p}}})_{E_{\mathfrak{p}}} = \mathbf{GU}(2,1)_{E_{\mathfrak{p}}}$,\medskip
\end{center}
and so for $L = \mathbb{Q}_{p^j} \supset \mathbb{Q}_{p}=E_{\mathfrak{p}}$,\medskip
\begin{center}
$\mathbf{GU}(2,1)_{L} = \mathbb{G}_{m,L} \times \mathbf{GL}_{3,L}.$\medskip
\end{center}
Also, in this case, the local Galois group is trivial, and so we can ignore the automorphism $\theta$. Overall, this means that the computation is exactly the same as it is in the case $G=\mathbf{GU}(2,1)$.

Recall that $\mu \in X_{\ast}(T)$ was defined to be the cocharacter of $T$ given by\medskip

\begin{center}
$u \mapsto (u, diag(u,u,1)) \in \mathbb{G}_{m,L} \times (\mathbb{G}_{m,L})^{3}.$\medskip
\end{center}
We will now view $\mu$ as a character of the dual torus $\hat{T}$ of $T$; viewed this way, it is the character given by

\begin{center}
$(t, (t_{1}, t_{2}, t_{3})) \mapsto t.t_{1}.t_{2}$\medskip
\end{center}
 Either way, we will regard $\mu$ as the element $(1,(1,1,0))$ of the space $\mathfrak{a}_{T} \subset \mathbb{R}^{4}$. It is then trivial to check that the orbit $W\mu$ is\medskip
\begin{center}
$\left\lbrace (1,(1,1,0)),(1,(1,0,1)),(1,(0,1,1))\right\rbrace $.\medskip
\end{center}


\noindent
\textbf{The Group $G$}. First, let $(t,(t_1,t_2,t_3 )) \in \hat{T}$ be the Langlands parameter of the representation $\pi_{p}$ of $T(\mathbb{Q}_{p})$. 

(A1) For $P = P_{u}$, $\Lambda = -\alpha/2=(0,-1/2,1/2)$,
\begin{center}
$p^{j((0,-1/2,1/2)(0,1/2,-1/2)+1)}  ((1,(1,1,0))(t,(t_{1},t_{2},t_{3} )))^{j}$

$+p^{j((0,-1/2,1/2)(0,-1/2,1/2)+1)}  ((1,(1,0,1))(t,(t_{1},t_{2},t_{3} )))^{j}$

$+p^{j((0,-1/2,1/2)(0,0,0)+1)}  ((1,(0,1,1))(t,(t_{1},t_{2},t_{3} )))^{j}$

$=p^{j/2} (t.t_{1}.t_{2} )^{j}+p^{3j/2} (t.t_{1}.t_{3} )^{j}+ p^{j} (t.t_{2}.t_{3} )^{j}$

$=p^{j}(p^{-j/2} (t.t_{1}.t_{2} )^{j}+p^{j/2} (t.t_{1}.t_{3} )^{j}+ (t.t_{2}.t_{3} )^{j})$
\end{center}

(B1)  For $P = P_{d}$, $\Lambda = \alpha/2=(0,1/2,-1/2)$,
\begin{center}
$p^{j((0,1/2,-1/2)(0,1/2,-1/2)+1)}  ((1,(1,1,0))(t,(t_{1},t_{2},t_{3} )))^{j}$

$+p^{j((0,1/2,-1/2)(0,-1/2,1/2)+1)}  ((1,(1,0,1))(t,(t_{1},t_{2},t_{3} )))^{j}$

$+p^{j((0,1/2,-1/2)(0,0,0)+1)}  ((1,(0,1,1))(t,(t_{1},t_{2},t_{3} )))^{j}$

$=p^{3j/2} (t.t_{1}.t_{2} )^{j}+p^{j/2} (t.t_{1}.t_{3} )^{j}+ p^{j} (t.t_{2}.t_{3} )^{j}$

$=p^{j}(p^{j/2} (t.t_{1}.t_{2} )^{j}+p^{-j/2} (t.t_{1}.t_{3} )^{j}+ (t.t_{2}.t_{3} )^{j})$
\end{center}

(C1) For $P = P_{u}, \Lambda= -\alpha/2=(0,-1/2,1/2)$,

\begin{center}
$=p^{j}(p^{-j/2} (t.t_{1}.t_{2} )^{j}+p^{j/2} (t.t_{1}.t_{3} )^{j}+ (t.t_{2}.t_{3} )^{j})$
\end{center}

the same as (A1);

(D1) For $P = P_{d}, \Lambda=\alpha/2=(0,1/2,-1/2)$,

\begin{center}
$=p^{j}(p^{j/2} (t.t_{1}.t_{2} )^{j}+p^{-j/2} (t.t_{1}.t_{3} )^{j}+ (t.t_{2}.t_{3} )^{j})$
\end{center}

the same as (B1).

\pagebreak
(E1) For $P = P_{u}, \Lambda=-\alpha=(0,-1,1)$,

\begin{center}
$p^{j((0,-1,1)(0,1/2,-1/2)+1)}  ((1,(1,1,0))(t,(t_{1},t_{2},t_{3} )))^{j}$

$+p^{j((0,-1,1)(0,-1/2,1/2)+1)}  ((1,(1,0,1))(t,(t_{1},t_{2},t_{3} )))^{j}$

$+p^{j((0,-1,1)(0,0,0)+1)}  ((1,(0,1,1))(t,(t_{1},t_{2},t_{3} )))^{j}$

$=(t.t_{1}.t_{2} )^{j}+p^{2j} (t.t_{1}.t_{3} )^{j}+ p^{j} (t.t_{2}.t_{3} )^{j}$

$=p^{j}(p^{-j}(t.t_{1}.t_{2} )^{j}+p^{j}(t.t_{1}.t_{3} )^{j}+ (t.t_{2}.t_{3} )^{j})$
\end{center}

(F1) For $P = P_{d}, \Lambda=\alpha=(0,1,-1)$,

\begin{center}
$p^{j((0,1,-1)(0,1/2,-1/2)+1)}  ((1,(1,1,0))(t,(t_{1},t_{2},t_{3} )))^{j}$

$+p^{j((0,1,-1)(0,-1/2,1/2)+1)}  ((1,(1,0,1))(t,(t_{1},t_{2},t_{3} )))^{j}$

$+p^{j((0,1,-1)(0,0,0)+1)}  ((1,(0,1,1))(t,(t_{1},t_{2},t_{3} )))^{j}$

$=p^{2j}(t.t_{1}.t_{2} )^{j}+ (t.t_{1}.t_{3} )^{j}+ p^{j} (t.t_{2}.t_{3} )^{j}$

$=p^{j}(p^{j}(t.t_{1}.t_{2} )^{j}+p^{-j}(t.t_{1}.t_{3} )^{j}+ (t.t_{2}.t_{3} )^{j})$
\end{center}

Finally, we need to consider

\begin{center}
$\sum_{X_{q} \in a_{T,q}} e^{-\Lambda(X_{q}^{G})}\phi_{q,M}(\pi_{q}, X_{q})$,
\end{center}
where $X_{q}^{G} \in j.\nu_{T}^{G}.logp+(a_{P}^{G})^{+}$.\medskip

(A2.1) For $P = P_{u}$, $\Lambda=(0,-1/2,1/2)$, and $\nu=(1,(1,1,0))$, $\nu_{T}^{G}=(0,1/2,-1/2)$,
\begin{center}
$\sum_{X_{q}=(y,x,-x) \in a_{T,q}} e^{x}\phi_{q,T}(\pi_{q}, X_{q})$.
\end{center}
where $x>(1/2)jlogp$;

(A2.2) For $P = P_{u}$, $\Lambda=(0,-1/2,1/2)$, and $\nu=(1,(1,0,1))$, $\nu_{T}^{G}=(0,-1/2,1/2)$,
\begin{center}
$\sum_{X_{q}=(y,x,-x) \in a_{T,q}} e^{x}\phi_{q,T}(\pi_{q}, X_{q})$.
\end{center}
where  $x>(-1/2)jlogp$;

(A2.3) For $P = P_{u}$, $\Lambda=(0,-1/2,1/2)$, and $\nu=(1,(0,1,1))$, $\nu_{T}^{G}=(0,0,0)$,
\begin{center}
$\sum_{X_{q}=(y,x,-x) \in a_{T,q}} e^{x}\phi_{q,T}(\pi_{q}, X_{q})$.
\end{center}
where  $x>0$;

(B2.1) For $P = P_{d}$, $\Lambda=(0,1/2,-1/2)$, and $\nu=(1,(1,1,0))$, $\nu_{T}^{G}=(0,1/2,-1/2)$,
\begin{center}
$\sum_{X_{q}=(y,x,-x) \in a_{T,q}} e^{-x}\phi_{q,T}(\pi_{q}, X_{q})$.
\end{center}
where  $x<(1/2)jlogp$;

(B2.2) For $P = P_{d}$, $\Lambda=(0,1/2,-1/2)$, and $\nu=(1,(1,0,1))$, $\nu_{T}^{G}=(0,-1/2,1/2)$,
\begin{center}
$\sum_{X_{q}=(y,x,-x) \in a_{T,q}} e^{-x}\phi_{q,T}(\pi_{q}, X_{q})$.
\end{center}
where  $x<(-1/2)jlogp$;

(B2.3) For $P = P_{d}$, $\Lambda=(0,1/2,-1/2)$, and $\nu=(1,(0,1,1))$, $\nu_{T}^{G}=(0,0,0)$,
\begin{center}
$\sum_{X_{q}=(y,x,-x) \in a_{T,q}} e^{-x}\phi_{q,T}(\pi_{q}, X_{q})$.
\end{center}
where  $x<0$.

(C2.1) For $P = P_{u}$, $\Lambda=(0,-1/2,1/2)$, and $\nu=(1,(1,1,0))$, $\nu_{T}^{G}=(0,1/2,-1/2)$,
\begin{center}
$\sum_{X_{q}=(y,x,-x) \in a_{T,q}} e^{x}\phi_{q,T}(\pi_{q}, X_{q})$.
\end{center}
where $x>(1/2)jlogp$;

(C2.2) For $P = P_{u}$, $\Lambda=(0,-1/2,1/2)$, and $\nu=(1,(1,0,1))$, $\nu_{T}^{G}=(0,-1/2,1/2)$,
\begin{center}
$\sum_{X_{q}=(y,x,-x) \in a_{T,q}} e^{x}\phi_{q,T}(\pi_{q}, X_{q})$.
\end{center}
where  $x>(-1/2)jlogp$;

(C2.3) For $P = P_{u}$, $\Lambda=(0,-1/2,1/2)$, and $\nu=(1,(0,1,1))$, $\nu_{T}^{G}=(0,0,0)$,
\begin{center}
$\sum_{X_{q}=(y,x,-x) \in a_{T,q}} e^{x}\phi_{q,T}(\pi_{q}, X_{q})$.
\end{center}
where  $x>0$;

(D2.1) For $P = P_{d}$, $\Lambda=(0,1/2,-1/2)$, and $\nu=(1,(1,1,0))$, $\nu_{T}^{G}=(0,1/2,-1/2)$,
\begin{center}
$\sum_{X_{q}=(y,x,-x) \in a_{T,q}} e^{-x}\phi_{q,T}(\pi_{q}, X_{q})$.
\end{center}
where  $x<(1/2)jlogp$;

(D2.2) For $P = P_{d}$, $\Lambda=(0,1/2,-1/2)$, and $\nu=(1,(1,0,1))$, $\nu_{T}^{G}=(0,-1/2,1/2)$,
\begin{center}
$\sum_{X_{q}=(y,x,-x) \in a_{T,q}} e^{-x}\phi_{q,T}(\pi_{q}, X_{q})$.
\end{center}
where  $x<(-1/2)jlogp$;

(D2.3) For $P = P_{d}$, $\Lambda=(0,1/2,-1/2)$, and $\nu=(1,(0,1,1))$, $\nu_{T}^{G}=(0,0,0)$,
\begin{center}
$\sum_{X_{q}=(y,x,-x) \in a_{T,q}} e^{-x}\phi_{q,T}(\pi_{q}, X_{q})$.
\end{center}
where  $x<0$.

(E2.1) For $P = P_{u}$, $\Lambda=(0,-1,1)$, and $\nu=(1,(1,1,0))$, $\nu_{T}^{G}=(0,1/2,-1/2)$,
\begin{center}
$\sum_{X_{q}=(y,x,-x) \in a_{T,q}} e^{2x}\phi_{q,T}(\pi_{q}, X_{q})$.
\end{center}
where $x>(1/2)jlogp$;

(E2.2) For $P = P_{u}$, $\Lambda=(0,-1,1)$, and $\nu=(1,(1,0,1))$, $\nu_{T}^{G}=(0,-1/2,1/2)$,
\begin{center}
$\sum_{X_{q}=(y,x,-x) \in a_{T,q}} e^{2x}\phi_{q,T}(\pi_{q}, X_{q})$.
\end{center}
where  $x>(-1/2)jlogp$;

(E2.3) For $P = P_{u}$, $\Lambda=(0,-1,1)$, and $\nu=(1,(0,1,1))$, $\nu_{T}^{G}=(0,0,0)$,
\begin{center}
$\sum_{X_{q}=(y,x,-x) \in a_{T,q}} e^{2x}\phi_{q,T}(\pi_{q}, X_{q})$.
\end{center}
where  $x>0$;

(F2.1) For $P = P_{d}$, $\Lambda=(0,1,-1)$, and $\nu=(1,(1,1,0))$, $\nu_{T}^{G}=(0,1/2,-1/2)$,
\begin{center}
$\sum_{X_{q}=(y,x,-x) \in a_{T,q}} e^{-2x}\phi_{q,T}(\pi_{q}, X_{q})$.
\end{center}
where  $x<(1/2)jlogp$;

(F2.2) For $P = P_{d}$, $\Lambda=(0,1,-1)$, and $\nu=(1,(1,0,1))$, $\nu_{T}^{G}=(0,-1/2,1/2)$,
\begin{center}
$\sum_{X_{q}=(y,x,-x) \in a_{T,q}} e^{-2x}\phi_{q,T}(\pi_{q}, X_{q})$.
\end{center}
where  $x<(-1/2)jlogp$;

(F2.3) For $P = P_{d}$, $\Lambda=(0,1,-1)$, and $\nu=(1,(0,1,1))$, $\nu_{T}^{G}=(0,0,0)$,
\begin{center}
$\sum_{X_{q}=(y,x,-x) \in a_{T,q}} e^{-2x}\phi_{q,T}(\pi_{q}, X_{q})$.
\end{center}
where  $x<0$.


\noindent
\textbf{The Group $H$}. Let $(t,t_{1},(t_{2},t_{3}))\in \hat{T}$ be the Langlands parameter of the representation $\pi_{p}$ of $T(\mathbb{Q}_{p})$. By the transfer explained in \cite[p.~72]{M}, we have

\begin{center}
$(1,(1,1,0))=XX_{1} X_{2} \mapsto(Z)(Z_{1,1} )(-Z_{2,1})=(1,1,(-1,0))$
$(1,(1,0,1))=XX_{1} X_{3} \mapsto(Z)(Z_{1,1} )(-Z_{2,2})=(1,1,(0,-1))$
$(1,(0,1,1))=XX_{2} X_{3} \mapsto(Z)(-Z_{2,1} )(-Z_{2,2})=(1,0,(-1,-1))$
\end{center}
where
\begin{center}
$X((t,(t_{1},t_{2},t_{3})))=t$

$X_{j}((t,(t_{1},t_{2},t_{3})))=t_{j}$
\end{center}
for $j=1,2,3$.

\begin{center}
$Z(t,t_1,(t_2,t_3 ))=t$

$Z_{1,1}(t,t_1,(t_2,t_3 ))=t_1$

$Z_{2,1}(t,t_1,(t_2,t_3 ))=t_2$

$Z_{2,2}(t,t_1,(t_2,t_3 ))=t_3$
\end{center}

Thus,
\begin{center}
$(Z)(Z_{1,1})(-Z_{2,1})(t_0,t_1,(t_2,t_3 ))=-t.t_1.t_2$

$(Z)(Z_{1,1})(-Z_{2,2})(t_0,t_1,(t_2,t_3 ))=-t.t_1.t_3$

$(Z)(-Z_{2,1})(-Z_{2,2})(t_0,t_1,(t_2,t_3 ))=t.t_2.t_3$
\end{center}

(H-A1) For $P = P_{u}$, $\Lambda=-\alpha/2=(0,-1/2,1/2)$,
\begin{center}
$p^{j((0,-1/2,1/2)(0,1/2,-1/2)+1)}  ((1,1,(-1,0))(t,t_1,(t_2,t_3 )))^{j}$

$+p^{j((0,-1/2,1/2)(0,-1/2,1/2)+1)}  ((1,1,(0,-1))(t,t_1,(t_2,t_3 )))^{j}$

$+p^{j((0,-1/2,1/2)(0,0,0)+1)}  ((1,0,(-1,-1))(t,t_1,(t_2,t_3 )))^{j}$

$= -p^{j/2} (t.t_1.t_2 )^{j} - p^{3j/2} (t.t_1.t_3 )^{j} + p^{j} (t.t_2.t_3 )^{j}$

$= -p^{j} (p^{-j/2} (t.t_{1}.t_{2})^{j} + p^{j/2} (t.t_{1}.t_{3})^{j} - (t.t_{2}.t_{3})^{j} )$
\end{center}

(H-B1) For $P = P_{d}$, $\Lambda=\alpha/2=(0,1/2,-1/2)$,
\begin{center}
$p^{j((0,1/2,-1/2)(0,1/2,-1/2)+1)}  ((1,1,(-1,0))(t,t_1,(t_2,t_3 )))^{j}$

$+p^{j((0,1/2,-1/2)(0,-1/2,1/2)+1)}  ((1,1,(0,-1))(t,t_1,(t_2,t_3 )))^{j}$

$+p^{j((0,1/2,-1/2)(0,0,0)+1)}  ((1,0,(-1,-1))(t,t_1,(t_2,t_3 )))^{j}$

$=-p^{3j/2}(t.t_{1}.t_{2})^{j} - p^{j/2} (t.t_{1}.t_{3})^{j} + p^{j} (t.t_{2}.t_{3})^{j}$

$=-p^{j} (p^{j/2} (t.t_{1}.t_{2})^{j} + p^{-j/2} (t.t_{1}.t_{3})^{j} - (t.t_{2}.t_{3})^{j} )$
\end{center}

(H-C1) For $P = P_{u}$, $\Lambda=-\alpha/2=(0,-1/2,1/2)$,
\begin{center}
$-p^{j} (p^{-j/2}(t.t_{1}.t_{2})^{j} + p^{j/2}(t.t_{1}.t_{3})^{j} - (t.t_{2}.t_{3})^{j})$
\end{center}
the same as (A1);

(H-D1) For $P = P_{d}$, $\Lambda=\alpha/2=(0,1/2,-1/2)$,
\begin{center}
$-p^{j}(p^{j/2}(t.t_{1}.t_{2})^{j} + p^{-j/2}(t.t_{1}.t_{3})^{j} - (t.t_{2}.t_{3})^{j})$
\end{center}
the same as (B1).

(H-E1) For $P = P_u$, $\Lambda=-\alpha=(0,-1,1)$,

\begin{center}
$p^{j((0,-1,1)(0,1/2,-1/2)+1)}((1,1,(-1,0))(t,t_{1},(t_{2},t_{3})))^{j}$
$+p^{j((0,-1,1)(0,-1/2,1/2)+1)}((1,1,(0,-1))(t,t_{1},(t_{2},t_{3})))^{j}$
$+p^{j((0,-1,1)(0,0,0)+1)} ((1,0,(-1,-1))(t,t_{1},(t_{2},t_{3})))^{j}$
$=-(t.t_{1}.t_{2})^j - p^{2j}(t.t_{1}.t_{3})^j + p^{j}(t.t_{2}.t_{3})^{j}$
$=-p^{j}(p^{-j}(t.t_{1}.t_{2})^{j} + p^{j}(t.t_{1}.t_{3})^{j} - (t.t_{2}.t_{3})^{j})$
\end{center}

(H-F1) For $P = P_{d}$, $\Lambda=\alpha=(0,1,-1)$,

\begin{center}
$p^{j((0,1,-1)(0,1/2,-1/2)+1)}((1,1,(-1,0))(t,t_1,(t_2,t_3 )))^{j}$
$+p^{j((0,1,-1)(0,-1/2,1/2)+1)}((1,1,(0,-1))(t,t_1,(t_2,t_3 )))^{j}$
$+p^{j((0,1,-1)(0,0,0)+1)} ((1,0,(-1,-1))(t,t_1,(t_2,t_3 )))^{j}$
$=-p^{2j}(2t.t_{1}.t_{2})^j - (t.t_{1}.t_{3})^j + p^{j}(t.t_{2}.t_{3})^{j}$
$=-p^{j}(p^{j}(t.t_{1}.t_{2})^{j} + p^{-j}(t.t_{1}.t_{3})^{j} - (t.t_{2}.t_{3})^{j})$
\end{center}

(H-A2.1) For $P = P_{u}, \Lambda=(0,-1/2,1/2)$, $\nu=(1,1,(-1,0))$, 
$\nu_{T}^{G}=(0,(-1/2,1/2))$,
\begin{center}
$\sum_{X_{q}=(y,x,-x)\in a_{T,q}}e^{x} \phi_{q,T}(\pi_{q},X_{q})$
\end{center}
where $x>(-1/2)jlogp$.

(H-A2.2) For $P = P_{u}, \Lambda=(0,-1/2,1/2)$, $\nu=(1,1,(0,-1))$, $\nu_{T}^{G}=(0,(1/2,-1/2))$,
\begin{center}
$\sum_{X_{q}=(y,x,-x)\in a_{T,q}}e^{x} \phi_{q,T}(\pi_{q},X_{q})$
\end{center}
where $x>(1/2)jlogp$.

(H-A2.3) For $P = P_{u}, \Lambda=(0,-1/2,1/2)$, $\nu=(1,1,(-1,-1))$, $\nu_{T}^{G}=(0,(0,0))$,
\begin{center}
$\sum_{X_{q}=(y,x,-x)\in a_{T,q}}e^{x} \phi_{q,T}(\pi_{q},X_{q})$
\end{center}
where $x>0$.

(H-B2.1) For $P = P_{d}, \Lambda=(0,1/2,-1/2)$, $\nu=(1,1,(-1,0))$, $\nu_{T}^{G}=(0,(-1/2,1/2))$,
\begin{center}
$\sum_{X_{q}=(y,x,-x)\in a_{T,q}}e^{-x} \phi_{q,T}(\pi_{q},X_{q})$
\end{center}
where $x<(-1/2)jlogp$.

(H-B2.2) For $P = P_{d}, \Lambda=(0,1/2,-1/2)$, $\nu=(1,1,(0,-1))$, $\nu_{T}^{G}=(0,(1/2,-1/2))$,
\begin{center}
$\sum_{X_{q}=(y,x,-x)\in a_{T,q}}e^{-x} \phi_{q,T}(\pi_{q},X_{q})$
\end{center}
where $x<(-1/2)jlogp$.

(H-B2.3) For $P = P_{d}, \Lambda=(0,1/2,-1/2)$, $\nu=(1,1,(-1,-1))$, $\nu_{T}^{G}=(0,(0,0))$,
\begin{center}
$\sum_{X_{q}=(y,x,-x)\in a_{T,q}}e^{-x} \phi_{q,T}(\pi_{q},X_{q})$
\end{center}
where $x<0$.

(H-C2.1) For $P = P_{u}, \Lambda=(0,-1/2,1/2)$, $\nu=(1,1,(-1,0))$, $\nu_{T}^{G}=(0,(-1/2,1/2))$,
\begin{center}
$\sum_{X_{q}=(y,x,-x)\in a_{T,q}}e^{x} \phi_{q,T}(\pi_{q},X_{q})$
\end{center}
where $x>(-1/2)jlogp$.

(H-C2.2) For $P = P_{u}, \Lambda=(0,-1/2,1/2)$, $\nu=(1,1,(0,-1))$, $\nu_{T}^{G}=(0,(1/2,-1/2))$,
\begin{center}
$\sum_{X_{q}=(y,x,-x)\in a_{T,q}}e^{x} \phi_{q,T}(\pi_{q},X_{q})$
\end{center}
where $x>(1/2)jlogp$.

(H-C2.3) For $P = P_{u}, \Lambda=(0,-1/2,1/2)$, $\nu=(1,1,(-1,-1))$, $\nu_{T}^{G}=(0,(0,0))$,
\begin{center}
$\sum_{X_{q}=(y,x,-x)\in a_{T,q}}e^{x} \phi_{q,T}(\pi_{q},X_{q})$
\end{center}
where $x>0$.

(H-D2.1) For $P = P_{d}, \Lambda=(0,1/2,-1/2)$, $\nu=(1,1,(-1,0))$, $\nu_{T}^{G}=(0,(-1/2,1/2))$,
\begin{center}
$\sum_{X_{q}=(y,x,-x)\in a_{T,q}}e^{-x} \phi_{q,T}(\pi_{q},X_{q})$
\end{center}
where $x<(-1/2)jlogp$.

(H-D2.2) For $P = P_{d}, \Lambda=(0,1/2,-1/2)$, and $\nu=(1,1,(0,-1))$, $\nu_{T}^{G}=(0,(1/2,-1/2))$,
\begin{center}
$\sum_{X_{q}=(y,x,-x)\in a_{T,q}}e^{-x} \phi_{q,T}(\pi_{q},X_{q})$
\end{center}
where $x<(1/2)jlogp$.

(H-D2.3) For $P = P_{d}, \Lambda=(0,1/2,-1/2)$, $\nu=(1,1,(-1,-1))$, $\nu_{T}^{G}=(0,(0,0))$,
\begin{center}
$\sum_{X_{q}=(y,x,-x)\in a_{T,q}}e^{-x} \phi_{q,T}(\pi_{q},X_{q})$
\end{center}
where $x<0$.

(H-E2.1) For $P = P_{u}, \Lambda=(0,-1,1)$, $\nu=(1,1,(-1,0))$, $\nu_{T}^{G}=(0,(-1/2,1/2))$,
\begin{center}
$\sum_{X_{q}=(y,x,-x)\in a_{T,q}}e^{2x} \phi_{q,T}(\pi_{q},X_{q})$
\end{center}
where $x>(-1/2)jlogp$.

(H-E2.2) For $P = P_{u}, \Lambda=(0,-1,1)$, $\nu=(1,1,(0,-1))$, $\nu_{T}^{G}=(0,(1/2,-1/2))$,
\begin{center}
$\sum_{X_{q}=(y,x,-x)\in a_{T,q}}e^{2x} \phi_{q,T}(\pi_{q},X_{q})$
\end{center}
where $x>(1/2)jlogp$.

(H-E2.3) For $P = P_{u}, \Lambda=(0,-1,1)$, $\nu=(1,1,(-1,-1))$, $\nu_{T}^{G}=(0,(0,0))$,
\begin{center}
$\sum_{X_{q}=(y,x,-x)\in a_{T,q}}e^{2x} \phi_{q,T}(\pi_{q},X_{q})$
\end{center}
where $x>0$.

(H-F2.1) For $P = P_{d}, \Lambda=(0,1,-1)$, $\nu=(1,1,(-1,0))$, $\nu_{T}^{G}=(0,(-1/2,1/2))$,
\begin{center}
$\sum_{X_{q}=(y,x,-x)\in a_{T,q}}e^{-2x} \phi_{q,T}(\pi_{q},X_{q})$
\end{center}
where $x<(-1/2)jlogp$.

(H-F2.2) For $P = P_{d}, \Lambda=(0,1,-1)$, $\nu=(1,1,(0,-1))$, $\nu_{T}^{G}=(0,(1/2,-1/2))$,
\begin{center}
$\sum_{X_{q}=(y,x,-x)\in a_{T,q}}e^{-2x} \phi_{q,T}(\pi_{q},X_{q})$
\end{center}
where $x<(1/2)jlogp$.

(H-F2.3) For $P = P_{d}, \Lambda=(0,1,-1)$, $\nu=(1,1,(-1,-1))$, $\nu_{T}^{G}=(0,(0,0))$,
\begin{center}
$\sum_{X_{q}=(y,x,-x)\in a_{T,q}}e^{-2x} \phi_{q,T}(\pi_{q},X_{q})$
\end{center}
where $x<0$.

\section{Stabilization of Parabolic Terms}

\textbf{First Major Sum for} $G$

\begin{center}
$\sum_{\pi : \pi^{\vee}_{\mathbb{R}}=e^{-i\theta}} 2\mathrm{tr} i_{P_u}(\pi_{f, \Lambda =(-1/2,1/2)}^{p,q}, \phi^{p,q})$
$p^{j}(p^{-j/2} (t.t_1.t_2 )^j \sum_{X_{q}=(y,x,-x)\in a_{T,q}: x>(1/2)jlogp} e^x \phi_{q,T} (\pi_q, X_q)$
$+p^{j/2} (t.t_1.t_3 )^j \sum_{X_{q}=(y,x,-x)\in a_{T,q}: x>(-1/2)jlogp} e^x \phi_{q,T} (\pi_q, X_q)$ 
$+(t.t_2.t_{3})^j \sum_{X_{q}=(y,x,-x)\in a_{T,q}: x>0} e^x \phi_{q,T} (\pi_q, X_q))$
\end{center}
\begin{center}
$+2\mathrm{tr} i_{P_d}(\pi_{f, \Lambda =(1/2,-1/2)}^{p,q}, \phi^{p,q})$
$p^{j}(p^{j/2} (t.t_1.t_2 )^j \sum_{X_{q}=(y,x,-x)\in a_{T,q}: x<(1/2)jlogp} e^{-x} \phi_{q,T} (\pi_q, X_q)$
$+p^{-j/2} (t.t_1.t_3)^j \sum_{X_{q}=(y,x,-x)\in a_{T,q}: x<(-1/2)jlogp} e^{-x} \phi_{q,T} (\pi_q, X_q)$ 
$+(t.t_2.t_3)^j \sum_{X_{q}=(y,x,-x)\in a_{T,q}: x<0} e^{-x} \phi_{q,T} (\pi_q, X_q))$
\end{center}

\begin{center}
$=\sum_{\pi : \pi^{\vee}_{\mathbb{R}}=e^{-i\theta}}2\mathrm{tr} i_{P_u}(\pi_{f, \Lambda =(-1/2,1/2)}^{p,q}, \phi^{p,q})$
$p^{j}(p^{j/2} (t.t_1.t_3)^j \sum_{X_{q}=(y,x,-x)\in a_{T,q}} e^x \phi_{q,T} (\pi_q, X_q)$ 
$+(t.t_2.t_3)^j \sum_{X_{q}=(y,x,-x)\in a_{T,q}: x>0} e^{\vert x \vert} \phi_{q,T} (\pi_q, X_q))$
\end{center}
\begin{center}
$+2\mathrm{tr} i_{P_d}(\pi_{f, \Lambda =(1/2,-1/2)}^{p,q}, \phi^{p,q})$

$p^{j}(p^{j/2} (t.t_1.t_{2})^j \sum_{X_{q}=(y,x,-x)\in a_{T,q}} e^{-x} \phi_{q,T} (\pi_q, X_q)$
$+(t.t_2.t_{3})^j \sum_{X_{q}=(y,x,-x)\in a_{T,q}: x<0} e^{\vert x \vert} \phi_{q,T} (\pi_q, X_q))$
\end{center}

\begin{center}
$=\sum_{\pi : \pi^{\vee}_{\mathbb{R}}=e^{-i\theta}} 2p^{3j/2} ((t.t_1.t_3)^j + (t.t_1.t_2)^j)\mathrm{tr} I_{T}(\pi_{f, \Lambda =(-1/2,1/2)}^{p}, \phi^p)$
 
$+\sum_{\pi : \pi^{\vee}_{\mathbb{R}}=e^{-i\theta}} 2p^{j} (t.t_2.t_{3})^j \mathrm{tr} I_{T}(\pi_{f, \Lambda =(-1/2,1/2)}^{p,q}, \phi^{p,q})$

$\sum_{X_{q}=(y,x,-x)\in a_{T,q}} e^{\vert x \vert} \phi_{q,T} (\pi_q, X_q))$
\end{center}

\textbf{Second Major Sum for}  $G$

\begin{center}
$\sum_{\pi : \pi^{\vee}_{\mathbb{R}}=e^{i\theta}}2\mathrm{tr} i_{P_u}(\pi_{f, \Lambda =(-1/2,1/2)}^{p,q}, \phi^{p,q})$
$p^{j}(p^{-j/2} (t.t_1.t_{2})^j \sum_{X_{q}=(y,x,-x)\in a_{T,q}: x>(1/2)jlogp} e^x \phi_{q,T} (\pi_q, X_q)$
$+p^{j/2} (t.t_1.t_{3})^j \sum_{X_{q}=(y,x,-x)\in a_{T,q}: x>(-1/2)jlogp} e^x \phi_{q,T} (\pi_q, X_q)$ 
$+(t.t_{2}.t_{3})^j \sum_{X_{q}=(y,x,-x)\in a_{T,q}: x>0} e^x \phi_{q,T} (\pi_q, X_q))$
\end{center}
\begin{center}
$+2\mathrm{tr} i_{P_d}(\pi_{f, \Lambda =(1/2,-1/2)}^{p,q}, \phi^{p,q})$
$p^{j}(p^{j/2} (t.t_{1}.t_{2})^j \sum_{X_{q}=(y,x,-x)\in a_{T,q}: x<(1/2)jlogp} e^{-x} \phi_{q,T} (\pi_q, X_q)$
$+p^{-j/2} (t.t_{1}.t_{3})^j \sum_{X_{q}=(y,x,-x)\in a_{T,q}: x<(-1/2)jlogp} e^{-x} \phi_{q,T} (\pi_q, X_q)$ 
$+(t.t_{2}.t_{3})^j \sum_{X_{q}=(y,x,-x)\in a_{T,q}: x<0} e^{-x} \phi_{q,T} (\pi_q, X_q))$
\end{center}

\begin{center}
$=\sum_{\pi : \pi^{\vee}_{\mathbb{R}}=e^{i\theta}}2\mathrm{tr} i_{P_u}(\pi_{f, \Lambda =(-1/2,1/2)}^{p,q}, \phi^{p,q})$
$p^{j}(p^{j/2} (t.t_1.t_{3})^j \sum_{X_{q}=(y,x,-x)\in a_{T,q}} e^x \phi_{q,T} (\pi_q, X_q)$ 
$+(t.t_{2}.t_{3})^j \sum_{X_{q}=(y,x,-x)\in a_{T,q}: x>0} e^{\vert x \vert} \phi_{q,T} (\pi_q, X_q))$
\end{center}
\begin{center}
$+2\mathrm{tr} i_{P_d}(\pi_{f, \Lambda =(1/2,-1/2)}^{p,q}, \phi^{p,q})$

$p^{j}(p^{j/2} (t.t_1.t_2 )^j \sum_{X_{q}=(y,x,-x)\in a_{T,q}} e^{-x} \phi_{q,T} (\pi_q, X_q)$

$+(t.t_{2}.t_{3})^j \sum_{X_{q}=(y,x,-x)\in a_{T,q}: x<0} e^{\vert x \vert} \phi_{q,T} (\pi_q, X_q))$
\end{center}

\begin{center}
$=\sum_{\pi : \pi^{\vee}_{\mathbb{R}}=e^{i\theta}} 2p^{3j/2} ((t.t_{1}.t_{2})^j + (t.t_{1}.t_{3})^j)\mathrm{tr} I_{T}(\pi_{f, \Lambda =(-1/2,1/2)}^{p}, \phi^p)$
 
$+\sum_{\pi : \pi^{\vee}_{\mathbb{R}}=e^{i\theta}} 2p^{j} (t.t_2.t_{3})^j \mathrm{tr} I_{T}(\pi_{f, \Lambda =(-1/2,1/2)}^{p,q}, \phi^{p,q})$

$\sum_{X_{q}=(y,x,-x)\in a_{T,q}} e^{\vert x \vert} \phi_{q,T} (\pi_q, X_q))$
\end{center}

\textbf{Third Major Sum for}  $G$

\begin{center}
$\sum_{\pi : \pi^{\vee}_{\mathbb{R}}=1}(-2)\mathrm{tr} i_{P_u}(\pi_{f, \Lambda =(-1,1)}^{p,q}, \phi^{p,q})$
$p^{j}(p^{-j} (t.t_{1}.t_{2})^j \sum_{X_{q}=(y,x,-x)\in a_{T,q}: x>(1/2)jlogp} e^{2x} \phi_{q,T} (\pi_q, X_q)$
$+p^{j} (t.t_{1}.t_{3})^j \sum_{X_{q}=(y,x,-x)\in a_{T,q}: x>(-1/2)jlogp} e^{2x} \phi_{q,T} (\pi_q, X_q)$ 
$+(t.t_{2}.t_{3})^j \sum_{X_{q}=(y,x,-x)\in a_{T,q}: x>0} e^{2x} \phi_{q,T} (\pi_q, X_q))$
\end{center}
\begin{center}
$+(-2)\mathrm{tr} i_{P_d}(\pi_{f, \Lambda =(1,-1)}^{p,q}, \phi^{p,q})$
$p^{j}(p^{j} (t.t_{1}.t_{2})^j \sum_{X_{q}=(y,x,-x)\in a_{T,q}: x<(1/2)jlogp} e^{-2x} \phi_{q,T} (\pi_q, X_q)$
$+p^{-j} (t.t_{1}.t_{3})^j \sum_{X_{q}=(y,x,-x)\in a_{T,q}: x<(-1/2)jlogp} e^{-2x} \phi_{q,T} (\pi_q, X_q)$ 
$+(t.t_{2}.t_{3})^j \sum_{X_{q}=(y,x,-x)\in a_{T,q}: x<0} e^{-2x} \phi_{q,T} (\pi_q, X_q))$
\end{center}

\begin{center}
$=\sum_{\pi : \pi^{\vee}_{\mathbb{R}}=1}(-2)\mathrm{tr} i_{P_u}(\pi_{f, \Lambda =(-1,1)}^{p,q}, \phi^{p,q})$
$p^{j}(p^{j} (t.t_1.t_{3})^j \sum_{X_{q}=(y,x,-x)\in a_{T,q}} e^{2x} \phi_{q,T} (\pi_q, X_q)$ 
$+(t.t_{2}.t_{3})^j \sum_{X_{q}=(y,x,-x)\in a_{T,q}: x>0} e^{\vert 2x \vert} \phi_{q,T} (\pi_q, X_q))$
\end{center}
\begin{center}
$+(-2)\mathrm{tr} i_{P_d}(\pi_{f, \Lambda =(1,-1)}^{p,q}, \phi^{p,q})$

$p^{j}(p^{j} (t.t_1.t_2 )^j \sum_{X_{q}=(y,x,-x)\in a_{T,q}} e^{-2x} \phi_{q,T} (\pi_q, X_q)$

$+(t.t_{2}.t_{3})^j \sum_{X_{q}=(y,x,-x)\in a_{T,q}: x<0} e^{\vert 2x \vert} \phi_{q,T} (\pi_q, X_q))$
\end{center}

\begin{center}
$=\sum_{\pi : \pi^{\vee}_{\mathbb{R}}=1} (-2)p^{2j} ((t.t_1.t_2)^j + (t.t_1.t_3)^j)\mathrm{tr} I_{T}(\pi_{f, \Lambda =(-1/2,1/2)}^{p}, \phi^p)$
 
$+\sum_{\pi : \pi^{\vee}_{\mathbb{R}}=e^{i\theta}}(-2p^{j} (t.t_2.t_{3})^j)\mathrm{tr} I_{T}(\pi_{f, \Lambda =(-1/2,1/2)}^{p,q}, \phi^{p,q})$

$\sum_{X_{q}=(y,x,-x)\in a_{T,q}} e^{\vert 2x \vert} \phi_{q,T} (\pi_q, X_q))$
\end{center}

\textbf{First Major Sum for}  $H$

\begin{center}
$\sum_{\pi : \pi^{\vee}_{\mathbb{R}}=\mu^{-1}e^{-i\theta}}(-2)\mathrm{tr} i_{P_u}(\pi_{h, \Lambda =(-1/2,1/2)}^{p,q}, \varphi^{p,q})$
$p^{j}(-p^{-j/2} (t.t_1.t_2 )^j \sum_{X_{q}=(y,x,-x)\in a_{T,q}: x>(-1/2)jlogp} e^x \varphi_{q,T} (\pi_q, X_q)$
$-p^{j/2} (t.t_1.t_3 )^j \sum_{X_{q}=(y,x,-x)\in a_{T,q}: x>(1/2)jlogp} e^x \varphi_{q,T} (\pi_q, X_q)$ 
$+(t.t_2.t_3 )^j \sum_{X_{q}=(y,x,-x)\in a_{T,q}: x>0} e^x \varphi_{q,T} (\pi_q, X_q))$
\end{center}
\begin{center}
$+(-2)\mathrm{tr} i_{P_d}(\pi_{f, \Lambda =(1/2,-1/2)}^{p,q}, \varphi^{p,q})$
$p^{j}(-p^{j/2} (t.t_1.t_2 )^j \sum_{X_{q}=(y,x,-x)\in a_{T,q}: x<(-1/2)jlogp} e^{-x} \varphi_{q,T} (\pi_q, X_q)$
$-p^{-j/2} (t.t_1.t_3 )^j \sum_{X_{q}=(y,x,-x)\in a_{T,q}: x<(1/2)jlogp} e^{-x} \varphi_{q,T} (\pi_q, X_q)$ 
$+(t.t_2.t_3 )^j \sum_{X_{q}=(y,x,-x)\in a_{T,q}: x<0} e^{-x} \varphi_{q,T} (\pi_q, X_q))$
\end{center}

\begin{center}
$=\sum_{\pi : \pi^{\vee}_{\mathbb{R}}=\mu^{-1}e^{-i\theta}}2\mathrm{tr} i_{P_u}(\pi_{h, \Lambda =(-1/2,1/2)}^{p,q}, \varphi^{p,q})$
$p^{j}(p^{-j/2} (t.t_1.t_2)^j \sum_{X_{q}=(y,x,-x)\in a_{T,q}} e^x \varphi_{q,T} (\pi_q, X_q)$ 
$-(t.t_2.t_3 )^j \sum_{X_{q}=(y,x,-x)\in a_{T,q}: x>0} e^{\vert x \vert} \varphi_{q,T} (\pi_q, X_q))$
\end{center}
\begin{center}
$+2\mathrm{tr} i_{P_d}(\pi_{f, \Lambda =(1/2,-1/2)}^{p,q}, \varphi^{p,q})$

$p^{j}(p^{-j/2} (t.t_1.t_{3})^j \sum_{X_{q}=(y,x,-x)\in a_{T,q}} e^{-x} \varphi_{q,T} (\pi_q, X_q)$
$-(t.t_2.t_{3})^j \sum_{X_{q}=(y,x,-x)\in a_{T,q}: x<0} e^{\vert x \vert} \varphi_{q,T} (\pi_q, X_q))$
\end{center}

\begin{center}
$=\sum_{\pi : \pi^{\vee}_{\mathbb{R}}=\mu^{-1}e^{-i\theta}} 2p^{j/2} ((t.t_1.t_{2})^j + (t.t_1.t_{3})^j) \mathrm{tr} I_{T}(\pi_{h, \Lambda =(-1/2,1/2)}^{p}, \varphi^{p})$

$-\sum_{\pi : \pi^{\vee}_{\mathbb{R}}=\mu^{-1}e^{-i\theta}} 2p^{j}(t.t_2.t_3 )^j \mathrm{tr} I_{T}(\pi_{h, \Lambda =(-1/2,1/2)}^{p,q}, \varphi^{p,q})$

$\sum_{X_{q}=(y,x,-x)\in a_{T,q}} e^{\vert x \vert} \varphi_{q,T} (\pi_q, X_q))$
\end{center}

\pagebreak
\textbf{Second Major Sum for}  $H$

\begin{center}
$\sum_{\pi : \pi^{\vee}_{\mathbb{R}}=\mu^{-1}e^{i\theta}}(-2)\mathrm{tr} i_{P_u}(\pi_{h, \Lambda =(-1/2,1/2)}^{p,q}, \varphi^{p,q})$
$p^{j}(-p^{-j/2} (t.t_1.t_{2})^j \sum_{X_{q}=(y,x,-x)\in a_{T,q}: x>(-1/2)jlogp} e^x \varphi_{q,T} (\pi_q, X_q)$
$-p^{j/2} (t.t_1.t_{3})^j \sum_{X_{q}=(y,x,-x)\in a_{T,q}: x>(1/2)jlogp} e^x \varphi_{q,T} (\pi_q, X_q)$ 
$+(t.t_{2}.t_{3})^j \sum_{X_{q}=(y,x,-x)\in a_{T,q}: x>0} e^x \varphi_{q,T} (\pi_q, X_q))$
\end{center}
\begin{center}
$+(-2)\mathrm{tr} i_{P_d}(\pi_{f, \Lambda =(1/2,-1/2)}^{p,q}, \varphi^{p,q})$
$p^{j}(-p^{j/2} (t.t_1.t_{2})^j \sum_{X_{q}=(y,x,-x)\in a_{T,q}: x<(-1/2)jlogp} e^{-x} \varphi_{q,T} (\pi_q, X_q)$
$-p^{-j/2} (t.t_1.t_{3})^j \sum_{X_{q}=(y,x,-x)\in a_{T,q}: x<(1/2)jlogp} e^{-x} \varphi_{q,T} (\pi_q, X_q)$ 
$+(t.t_{2}.t_{3})^j \sum_{X_{q}=(y,x,-x)\in a_{T,q}: x<0} e^{-x} \varphi_{q,T} (\pi_q, X_q))$
\end{center}

\begin{center}
$=\sum_{\pi : \pi^{\vee}_{\mathbb{R}}=\mu^{-1}e^{i\theta}}2\mathrm{tr} i_{P_u}(\pi_{h, \Lambda =(-1/2,1/2)}^{p,q}, \varphi^{p,q})$
$p^{j}(p^{-j/2} (t.t_1.t_2)^j \sum_{X_{q}=(y,x,-x)\in a_{T,q}} e^x \varphi_{q,T} (\pi_q, X_q)$ 
$-(t.t_2.t_3 )^j \sum_{X_{q}=(y,x,-x)\in a_{T,q}: x>0} e^{\vert x \vert} \varphi_{q,T} (\pi_q, X_q))$
\end{center}
\begin{center}
$+2\mathrm{tr} i_{P_d}(\pi_{f, \Lambda =(1/2,-1/2)}^{p,q}, \varphi^{p,q})$

$p^{j}(p^{-j/2} (t.t_1.t_{3})^j \sum_{X_{q}=(y,x,-x)\in a_{T,q}} e^{-x} \varphi_{q,T} (\pi_q, X_q)$
$-(t.t_2.t_{3})^j \sum_{X_{q}=(y,x,-x)\in a_{T,q}: x<0} e^{\vert x \vert} \varphi_{q,T} (\pi_q, X_q))$
\end{center}

\begin{center}
$=\sum_{\pi : \pi^{\vee}_{\mathbb{R}}=\mu^{-1}e^{i\theta}} 2p^{j/2} ((t.t_1.t_{2})^j + (t.t_1.t_{3})^j) \mathrm{tr} I_{T}(\pi_{h, \Lambda =(-1/2,1/2)}^{p}, \varphi^{p})$

$-\sum_{\pi : \pi^{\vee}_{\mathbb{R}}=\mu^{-1}e^{i\theta}} 2p^{j}(t.t_2.t_3 )^j \mathrm{tr} I_{T}(\pi_{h, \Lambda =(-1/2,1/2)}^{p,q}, \varphi^{p,q})$

$\sum_{X_{q}=(y,x,-x)\in a_{T,q}} e^{\vert x \vert} \varphi_{q,T} (\pi_q, X_q))$
\end{center}

\textbf{Third Major Sum for}  $H$

\begin{center}
$\sum_{\pi : \pi^{\vee}_{\mathbb{R}}=\mu^{-1}}2\mathrm{tr} i_{P_u}(\pi_{h, \Lambda =(-1,1)}^{p,q}, \varphi^{p,q})$
$p^{j}(-p^{-j} (t.t_1.t_{2})^j \sum_{X_{q}=(y,x,-x)\in a_{T,q}: x>(-1/2)jlogp} e^{2x} \varphi_{q,T} (\pi_q, X_q)$
$-p^{j} (t.t_1.t_{3})^j \sum_{X_{q}=(y,x,-x)\in a_{T,q}: x>(1/2)jlogp} e^{2x} \varphi_{q,T} (\pi_q, X_q)$ 
$+(t.t_2.t_{3})^j \sum_{X_{q}=(y,x,-x)\in a_{T,q}: x>0} e^{2x} \varphi_{q,T} (\pi_q, X_q))$
\end{center}

\begin{center}
$+2\mathrm{tr} i_{P_d}(\pi_{h, \Lambda =(1,-1)}^{p,q}, \varphi^{p,q})$
$p^{j}(-p^{j} (t.t_1.t_{2})^j \sum_{X_{q}=(y,x,-x)\in a_{T,q}: x<(-1/2)jlogp} e^{-2x} \varphi_{q,T} (\pi_q, X_q)$
$-p^{-j} (t.t_1.t_{3})^j \sum_{X_{q}=(y,x,-x)\in a_{T,q}: x<(1/2)jlogp} e^{-2x} \varphi_{q,T} (\pi_q, X_q)$ 
$+(t.t_2.t_{3})^j \sum_{X_{q}=(y,x,-x)\in a_{T,q}: x<0} e^{-2x} \varphi_{q,T} (\pi_q, X_q))$
\end{center}

\begin{center}
$=\sum_{\pi : \pi^{\vee}_{\mathbb{R}}=\mu^{-1}}2\mathrm{tr} i_{P_u}(\pi_{h, \Lambda =(-1/2,1/2)}^{p,q}, \varphi^{p,q})$
$p^{j}(-p^{-j} (t.t_1.t_2)^j \sum_{X_{q}=(y,x,-x)\in a_{T,q}} e^{2x} \varphi_{q,T} (\pi_q, X_q)$ 
$+(t.t_2.t_3 )^j \sum_{X_{q}=(y,x,-x)\in a_{T,q}: x>0} e^{\vert 2x \vert} \varphi_{q,T} (\pi_q, X_q))$
\end{center}
\begin{center}
$+2\mathrm{tr} i_{P_d}(\pi_{f, \Lambda =(1/2,-1/2)}^{p,q}, \varphi^{p,q})$

$p^{j}(-p^{-j} (t.t_1.t_{3})^j \sum_{X_{q}=(y,x,-x)\in a_{T,q}} e^{-2x} \varphi_{q,T} (\pi_q, X_q)$
$+(t.t_2.t_{3})^j \sum_{X_{q}=(y,x,-x)\in a_{T,q}: x<0} e^{\vert 2x \vert} \varphi_{q,T} (\pi_q, X_q))$
\end{center}

\begin{center}
$=\sum_{\pi : \pi^{\vee}_{\mathbb{R}}=\mu^{-1}} (-2)((t.t_1.t_{2})^j + (t.t_1.t_{3})^j) \mathrm{tr} I_{T}(\pi_{h, \Lambda =(-1/2,1/2)}^{p}, \varphi^{p})$

$+\sum_{\pi : \pi^{\vee}_{\mathbb{R}}=\mu^{-1}} 2p^{j}(t.t_2.t_3 )^j \mathrm{tr} I_{T}(\pi_{h, \Lambda =(-1/2,1/2)}^{p,q}, \varphi^{p,q})$

$\sum_{X_{q}=(y,x,-x)\in a_{T,q}} e^{\vert 2x \vert} \varphi_{q,T} (\pi_q, X_q))$
\end{center}

\begin{center}

\pagebreak
SIDE BY SIDE:
\end{center}

\textbf{First Sum for} $G$

\begin{center}
$\sum_{\pi : \pi^{\vee}_{\mathbb{R}}=e^{-i\theta}} 2p^{3j/2} ((t.t_1.t_2)^j + (t.t_1.t_3)^j)\mathrm{tr} I_{T}(\pi_{f, \Lambda =(-1/2,1/2)}^{p}, \phi^p)$
 
$+\sum_{\pi : \pi^{\vee}_{\mathbb{R}}=e^{-i\theta}} 2p^{j} (t.t_2.t_{3})^j \mathrm{tr} I_{T}(\pi_{f, \Lambda =(-1/2,1/2)}^{p,q}, \phi^{p,q})$

$\sum_{X_{q}=(y,x,-x)\in a_{T,q}} e^{\vert x \vert} \phi_{q,T} (\pi_q, X_q))$
\end{center}

\textbf{First Sum for} $H$

\begin{center}
$\sum_{\pi : \pi^{\vee}_{\mathbb{R}}=\mu^{-1}e^{-i\theta}} 2p^{j/2} ((t.t_1.t_{2})^j + (t.t_1.t_{3})^j) \mathrm{tr} I_{T}(\pi_{h, \Lambda =(-1/2,1/2)}^{p}, \varphi^{p})$

$-\sum_{\pi : \pi^{\vee}_{\mathbb{R}}=\mu^{-1}e^{-i\theta}} 2p^{j}(t.t_2.t_3 )^j \mathrm{tr} I_{T}(\pi_{h, \Lambda =(-1/2,1/2)}^{p,q}, \varphi^{p,q})$

$\sum_{X_{q}=(y,x,-x)\in a_{T,q}} e^{\vert x \vert} \varphi_{q,T} (\pi_q, X_q))$\medskip
\end{center}

\begin{center}
PUT TOGETHER:\medskip
\end{center}

\begin{center}
$\sum_{\pi : \pi^{\vee}_{\mathbb{R}}=e^{-i\theta}} 2p^{j}(p^{j/2} + p^{-j/2})((t.t_1.t_2)^j + (t.t_1.t_3)^j)\mathrm{tr} I_{T}(\pi_{f, \Lambda =(-1/2,1/2)}^{p}, \phi^p)$\medskip
\end{center}

\begin{center}
SIDE BY SIDE:
\end{center}

\textbf{Second Sum for} $G$

\begin{center}
$=\sum_{\pi : \pi^{\vee}_{\mathbb{R}}=e^{i\theta}} 2p^{3j/2} ((t.t_{1}.t_{2})^j + (t.t_{1}.t_{3})^j)\mathrm{tr} I_{T}(\pi_{f, \Lambda =(-1/2,1/2)}^{p}, \phi^p)$
 
$+\sum_{\pi : \pi^{\vee}_{\mathbb{R}}=e^{i\theta}} 2p^{j}(t.t_2.t_{3})^j \mathrm{tr} I_{T}(\pi_{f, \Lambda =(-1/2,1/2)}^{p,q}, \phi^{p,q})$

$\sum_{X_{q}=(y,x,-x)\in a_{T,q}} e^{\vert x \vert} \phi_{q,T} (\pi_q, X_q))$
\end{center}

\textbf{Second Sum for} $H$
\begin{center}
$=\sum_{\pi : \pi^{\vee}_{\mathbb{R}}=\mu^{-1}e^{i\theta}} 2p^{j/2} ((t.t_1.t_{2})^j + (t.t_1.t_{3})^j) \mathrm{tr} I_{T}(\pi_{h, \Lambda =(-1/2,1/2)}^{p}, \varphi^{p})$

$-\sum_{\pi : \pi^{\vee}_{\mathbb{R}}=\mu^{-1}e^{i\theta}} 2p^{j}(t.t_2.t_3 )^j \mathrm{tr} I_{T}(\pi_{h, \Lambda =(-1/2,1/2)}^{p,q}, \varphi^{p,q})$

$\sum_{X_{q}=(y,x,-x)\in a_{T,q}} e^{\vert x \vert} \varphi_{q,T} (\pi_q, X_q))$\medskip
\end{center}

\begin{center}
PUT TOGETHER:\medskip
\end{center}

\begin{center}
$\sum_{\pi : \pi^{\vee}_{\mathbb{R}}=e^{i\theta}} 2p^{j}(p^{j/2} + p^{-j/2})((t.t_1.t_2)^j + (t.t_1.t_3)^j)\mathrm{tr} I_{T}(\pi_{f, \Lambda =(-1/2,1/2)}^{p}, \phi^p)$\medskip
\end{center}

\begin{center}
SIDE BY SIDE:
\end{center}

\textbf{Third Sum for} $G$
\begin{center}
$=\sum_{\pi : \pi^{\vee}_{\mathbb{R}}=1} (-2)p^{2j} ((t.t_1.t_2)^j + (t.t_1.t_3)^j)\mathrm{tr} I_{T}(\pi_{f, \Lambda =(-1,1)}^{p}, \phi^p)$
 
$-\sum_{\pi : \pi^{\vee}_{\mathbb{R}}=1} 2p^{j} (t.t_2.t_{3})^j \mathrm{tr} I_{T}(\pi_{f, \Lambda =(-1,1)}^{p,q}, \phi^{p,q})$

$\sum_{X_{q}=(y,x,-x)\in a_{T,q}} e^{\vert 2x \vert} \phi_{q,T} (\pi_q, X_q))$
\end{center}

\textbf{Third Sum for} $H$
\begin{center}
$=\sum_{\pi : \pi^{\vee}_{\mathbb{R}}=\mu^{-1}} (-2)((t.t_1.t_{2})^j + (t.t_1.t_{3})^j) \mathrm{tr} I_{T}(\pi_{h, \Lambda =(-1,1)}^{p}, \varphi^{p})$

$+\sum_{\pi : \pi^{\vee}_{\mathbb{R}}=\mu^{-1}} 2p^{j}(t.t_2.t_3 )^j \mathrm{tr} I_{T}(\pi_{h, \Lambda =(-1,1)}^{p,q}, \varphi^{p,q})$

$\sum_{X_{q}=(y,x,-x)\in a_{T,q}} e^{\vert 2x \vert} \varphi_{q,T} (\pi_q, X_q))$\medskip
\end{center}

\begin{center}

PUT TOGETHER:\medskip
\end{center}
\begin{center}
$\sum_{\pi : \pi^{\vee}_{\mathbb{R}}=1} (-2)p^{j}(p^{j} + p^{-j})((t.t_1.t_2)^j + (t.t_1.t_3)^j)\mathrm{tr} I_{T}(\pi_{f, \Lambda =(-1,1)}^{p}, \phi^p)$
\end{center}

\pagebreak
\subsection{Conclusion}

Let $K$ be the compact open subgroup of $\mathbf{G} = \mathbf{GU}(2,1)$ introduced in subsection 2.2 and let $K_G$ be the subgroup of $R_{E/\mathbb{Q}}\mathbf{G}_E$ that is associated to $K$ in the manner described in \cite[p.~134]{M}. Following Laumon in \cite[p.~338]{L}, we can now use class field theory and the results of Eichler-Shimura to identify the first component of each of the three combined sums above as the trace on a Galois module that is associated to the automorphic representation that appears in the second part of each sum. Thus, we obtain three Gal$(\bar{E}/E) \times C^{\infty}_c(G(\mathbb{A}_f) // K_G)$ -modules, call them $M_1$, $M_2$, and $M_3$, and we form the virtual Gal$(\bar{E}/E) \times C^{\infty}_c(G(\mathbb{A}_f) // K_G)$ -module $M = M_1 + M_2 - M_3$. 
The parabolic part of the trace will be identified as the trace on this virtual module.

Let $F$ be a number field large enough to contain the field of definition of the three Galois modules above. We may then consider the virtual Gal$(\bar{E}/E) \times C^{\infty}_c(\mathbf{G}(\mathbb{A}_f) // K)$ -module
\begin{center}
$W_{\lambda} = W_{\ell} \otimes_{\mathbb{Q}_{\ell}} F_{\lambda} = \sum_{i\geqslant0} (-1)^{i} H^i_c(S_K(\mathbf{G}) \otimes_{E} \bar{E}, F_{\lambda}).$\medskip
\end{center}
for a finite place $\lambda$ of $F$ and a prime $\ell$ such that $\lambda$ divides $\ell$.

We can now prove our main result:

\begin{theorem} 
\textit{Let $\lambda$ be a finite place of the number field $F$ above, and let $\ell$ be the prime such that $\lambda$ divides $\ell$. Let $p$ be a prime different from $\ell$, good with respect to $K$, and split in $E$. Then for every function $f^p \in C^{\infty}_c(\mathbf{G}(\mathbb{A}^p_f)//K^p)_{\mathbb{Q}}$ and for every $j > 0$, the parabolic part of the trace}
\begin{center}
$\mathrm{tr}(\Phi^p_j \times f^p, W_{\lambda})$
\end{center}
is given by

\begin{center}
$\frac{1}{2}(J_{T}^{G}(\phi^{G}) + J_{T}^{H}(\phi^{H}))$\medskip

$=\sum_{\pi : \pi_{\mathbb{R}}(g)=e^{i\theta}} (1 + p^j)p^{j/2}((t.t_1.t_2)^j + (t.t_1.t_3)^j)\mathrm{tr} I_{T}(\pi_{f, \Lambda =(-1/2,1/2)}^{p}, \phi^{p})$\medskip

$+\sum_{\pi : \pi_{\mathbb{R}}(g)=e^{-i\theta}} (1 + p^j)p^{j/2}((t.t_1.t_2)^j + (t.t_1.t_3)^j)\mathrm{tr} I_{T}(\pi_{f, \Lambda =(-1/2,1/2)}^{p}, \phi^{p})$\medskip

$-\sum_{\pi : \pi_{\mathbb{R}}(g)=1} p^{j}(p^{j} + p^{-j})((t.t_1.t_2)^j + (t.t_1.t_3)^j)\mathrm{tr} I_{T}(\pi_{f, \Lambda =(-1,1)}^{p}, \phi^{p}),$\medskip
\end{center}
where $\pi$ ranges over cuspidal automorphic representations of $T = T^0 \rtimes \theta$.
\end{theorem}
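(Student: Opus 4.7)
The plan is to assemble the theorem directly from the identities that have already been set up, and then use class field theory plus Eichler--Shimura to reinterpret the resulting expression as a trace on $W_\lambda$. First I would feed the explicit values of $d^{G^0}_{P^0}$ and $d^{H^0}_{P^0}$ computed in Corollaries 5.9 and 5.11 into the spectral formulas of Theorems 4.15 and 4.16, using the dictionary that the only characters $\Theta_{\{\rho^{0\vee},A_{\rho^{0\vee}}\}}(\gamma)$ contributing on the $G$-side are $e^{\pm i(\theta_1+\theta_3)}$ and $1$, while on the $H$-side one multiplies each of these by $\mu(\gamma)^{-1}$ and flips signs according to $\varepsilon(\mathbf{H})=-1$. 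Combined with the computations at $p$ from Section 6, this produces exactly the three pairs of ``Major Sums'' displayed above the theorem statement.

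Next I would match the $H$-sums against the $G$-sums. Because $h^p$ is a transfer of $f^p$ and $\varphi_q=b^H(\phi_q)$, the $H$-side representations $\pi$ with $\pi_{\mathbb{R}}^{\vee}=\mu^{-1}e^{\pm i\theta}$ (resp.\ $\mu^{-1}$) correspond via base change and endoscopic transfer to $G$-side representations with $\pi_{\mathbb{R}}^{\vee}=e^{\pm i\theta}$ (resp.\ $1$), and the induced traces $\operatorname{tr} I_T(\pi_{h}^{p,q},\varphi^{p,q})$ agree with $\operatorname{tr} I_T(\pi_f^{p,q},\phi^{p,q})$. The crucial cancellation is that the $(t.t_2.t_3)^j$ terms appear with opposite signs on the two sides (one has a $+$ from $\varepsilon(\mathbf{G})=1$, the other a $-$ from $\varepsilon(\mathbf{H})=-1$) and drop out in the sum; the surviving $(t.t_1.t_2)^j+(t.t_1.t_3)^j$ contributions combine as $p^{3j/2}+p^{j/2}=p^{j/2}(1+p^j)$ in the first two combined sums and $-p^{j}(p^{j}+p^{-j})$ in the third. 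Multiplying by $\tfrac{1}{2}$ and accounting for the factor of $2$ already sitting in the $d$-computations gives the displayed identity.

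To finish, I would identify the resulting expression as a trace on a virtual Galois--Hecke module. For each of the three combined sums, the factors $(t.t_1.t_2)^j+(t.t_1.t_3)^j$ together with the power of $p$ are the Frobenius trace on a two-dimensional $\ell$-adic Galois representation attached by class field theory to the Hecke character $\pi$ of $T=T^0\rtimes\theta$ (viewed as a pair of Hecke characters of $E^{\times}$ via the diagonal torus, twisted by the similitude character). The Eichler--Shimura-type identification appealed to in \cite[p.~338]{L} then packages these into Gal$(\bar{E}/E)\times C^{\infty}_c(\mathbf{G}(\mathbb{A}_f)//K)$-modules $M_1,M_2,M_3$ whose Frobenius-Hecke traces match the three combined sums; setting $M=M_1+M_2-M_3$ yields the stated parabolic piece of $\operatorname{tr}(\Phi^p_j\times f^p,W_\lambda)$. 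Finally, since both sides are traces of a well-defined finite-rank operator in $f^p$ and $j$, the assumptions of strong $C$-regularity and $C$-regularity placed on $\phi^{p,q}\phi_q$ in Section 4 can be removed by the usual Laumon density argument \cite[pp.~339--340]{L}, yielding the theorem for every $f^p\in C^\infty_c(\mathbf{G}(\mathbb{A}^p_f)//K^p)_{\mathbb{Q}}$ and every $j>0$.

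The main obstacle I anticipate is the Galois-module identification: one must verify that the naive two-dimensional Galois representation built from the Hecke character of $T$ really is equivariant for the full Hecke algebra $C^{\infty}_c(\mathbf{G}(\mathbb{A}_f)//K)$ action (not just $C^{\infty}_c(G(\mathbb{A}_f)//K_G)$) and that it matches the contributions of the parabolic induction $I_T(\pi_f^{p,q})$ at every prime---this is the content of Laumon's use of class field theory and Eichler--Shimura in the $\mathbf{GSp}(4)$ setting, and translating it to the base-changed setting of $G=R_{E/\mathbb{Q}}\mathbf{G}_E\rtimes\theta$ requires some care with normalizations of local transfer factors and with the switch between representations of $T$ and of $T^0\rtimes\theta$.
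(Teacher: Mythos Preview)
Your proposal is correct and follows essentially the same route as the paper: assemble the Major Sums from the explicit $d$-values (Corollaries 5.9, 5.11) and the local computations at $p$ (Section 6), observe the cancellation of the $(t.t_2.t_3)^j$-terms between the $G$- and $H$-sides, then invoke the Galois-module identification (class field theory plus Eichler--Shimura, as in \cite[p.~338]{L}) so that Laumon's density argument removes the regularity hypotheses. The paper cites \cite[p.~344]{L} rather than pp.~339--340 for that last step, and it adds one extra observation you should make explicit: once the identity is known for arbitrary $\phi^{p,\infty}$ on $G$ and $H$, one returns to arbitrary $f^p$ on $\mathbf{G}$ by choosing associated functions via \cite[p.~136]{M}, which requires no further constraints.
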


\begin{proof} The computations above establish the claim for test functions $\phi^G$ and $\phi^H$ on $G$ and $H$, respectively, that satisfy the assumptions of theorem 4.12.  In particular, the component at infinity $\phi^G_{\mathbb{R}}$ is chosen as in section 5.3. so that it is associated to the function $
f_{\mathbb{R}}$ on $\mathbf{GU}(2,1)$, and similarly for $\phi^H_{\mathbb{R}}$ and $f^H_{\mathbb{R}}$. In both cases, both of these functions can be adjusted by changing the choice of pseudocoefficients so as to assure that the assumptions of 4.12 are satisfied; this follows from \cite[p.~320]{L} and \cite[p.~124]{M}. In order to remove these assumptions, we can imitate Laumon's argument \cite[p.~344]{L}. It is here that we need the identification of the first part of each summand above as the trace of Frob$_p^j$ on a Galois module. Thus, we obtain the equality of $2(J_{T}^{G}(\phi^{G}) + J_{T}^{H}(\phi^{H}))$ with the displayed expression for $\phi^G$ and $\phi^H$ that are arbitrary away from $p$.\footnote{Note that in this first part of the argument, we do not need to view the functions $\phi^{p, \infty}$ away from $p$ and infinity as being associated with functions on the unitary groups, and so we need not worry about the transference of the conditions required by 4.12.} Finally, for all $q \neq p$, we can now start with any test function $f_q$ on $\mathbf{GU}(2,1)$, a transfer $h_q$ of $f_q$ to $\mathbf{H}$, and by \cite[p.~136]{M}, we can find an associated function for each one. By the first part of our argument, these associated functions are not required to satisfy any further assumptions, and so the results holds for any $f^p$.
\end{proof}


\bibliographystyle {uclathesis}

\bibliographystyle{amsplain}

\end{document}